\title{Stable approximation of Helmholtz solutions in the disk\\
by evanescent plane waves}
\author[1,2]{Emile Parolin}
\author[3]{Daan Huybrechs}
\author[4]{Andrea Moiola}
\affil[1]{Laboratoire Jacques-Louis Lions, Sorbonne Université, Paris, France}
\affil[2]{Alpines, Inria, Paris, France, \texttt{emile.parolin@inria.fr}}
\affil[3]{KU Leuven, Leuven, Belgium, \texttt{daan.huybrechs@kuleuven.be}}
\affil[4]{Universit\`a di Pavia, Pavia, Italy, \texttt{andrea.moiola@unipv.it}}
\theoremstyle{plain}
\newtheorem{theorem}{Theorem}[section]
\newtheorem{proposition}[theorem]{Proposition}
\newtheorem{lemma}[theorem]{Lemma}
\newtheorem{corollary}[theorem]{Corollary}
\newtheorem{definition}[theorem]{Definition}
\newtheorem{remark}[theorem]{Remark}
\newtheorem{conjecture}[theorem]{Conjecture}
\numberwithin{equation}{section}
\begin{document}

\maketitle

\begin{abstract}
    Superpositions of plane waves are known to approximate well the solutions
    of the Helmholtz equation.
    Their use in discretizations is typical of Trefftz methods for Helmholtz
    problems, aiming to achieve high accuracy with a small number of degrees of
    freedom. 
    However, Trefftz methods lead to ill-conditioned linear systems, and it is
    often impossible to obtain the desired accuracy in floating-point
    arithmetic.
    In this paper we show that a judicious choice of plane waves can ensure
    high-accuracy solutions in a numerically stable way, in spite of having to
    solve such ill-conditioned systems.

    Numerical accuracy of plane wave methods is linked not only to the
    approximation space, but also to the size of the coefficients in the plane
    wave expansion. We show that the use of plane waves can lead to
    exponentially large coefficients, regardless of the orientations and the
    number of plane waves, and this causes numerical instability.
    We prove that all Helmholtz fields are continuous superposition of
    evanescent plane waves, i.e., plane waves with complex propagation vectors
    associated with exponential decay, and show that this leads to bounded
    representations.
    We provide a constructive scheme to select a set of real and complex-valued
    propagation vectors numerically.
    This results in an explicit selection of plane waves and an associated
    Trefftz method that achieves accuracy and stability.

    The theoretical analysis is provided for a two-dimensional domain with
    circular shape.
    However, the principles are general and we conclude the paper with a
    numerical experiment demonstrating practical applicability also for
    polygonal domains.
\end{abstract}

\medskip\textbf{Keywords:}
Helmholtz equation,
Plane waves,
Evanescent waves,
Trefftz method,
Stable approximation,
Sampling,
Frames,
Reproducing kernel Hilbert spaces,
Herglotz representation

\medskip\textbf{AMS subject classification:}
35J05, 
41A30, 
42C15, 
44A15.

\section{Introduction}\label{sec:intro}

The space dependence of time-harmonic solutions
$U(\mathbf x,t)=\Re\{e^{-\imath\omega t}u(\mathbf x)\}$
of the scalar wave equation
$\frac1{c^2}\frac{\partial^2 U}{\partial t^2}-\Delta U=0$
is characterized by the homogeneous Helmholtz equation 
\begin{equation}\label{eq:Helmholtz}
    -\Delta u-\kappa^2 u=0.
\end{equation}
The wavenumber is $\kappa:=\omega/c>0$, with $c$ the wave speed and $\omega$
the time frequency.
Solutions of boundary value problems for the Helmholtz equation are 
oscillatory, making their numerical approximation notoriously
computationally expensive at high frequencies, namely when the wavelength
$\lambda:=2\pi/\kappa$ is much smaller than the characteristic length of the
domain.

A well-studied way to efficiently represent Helmholtz solutions in a domain of
$\mathbb R^n$ is to approximate them with linear combinations of propagative
plane waves
$\mathbf x\mapsto e^{\imath\kappa\mathbf d\cdot\mathbf x}$,
which are particular solutions of~\eqref{eq:Helmholtz} if
the propagation direction $\mathbf d\in\mathbb R^n$
satisfies $\mathbf d\cdot\mathbf d=1$.
Plane waves indeed offer better accuracy for fewer degrees of
freedom compared to polynomial spaces, as supported by
the theory developed in\ \cite{Moiola2011}, building
on previous results in\ \cite[Sec.~8.4]{Melenk1995}
and\ \cite[Sec.~3.3.5]{CessenatDespres1998}.
Approximation by plane waves has been extensively used in the context of
Trefftz schemes for the Helmholtz equation, a class of methods that use trial
and test functions satisfying~\eqref{eq:Helmholtz} locally on each element of
a mesh, see\ \cite{Hiptmair2016} for a comprehensive survey.
The simple expression of plane waves allows for very cheap
implementations; for instance, integrals of products of these
functions can be computed in closed form with
wavenumber-independent effort, see \cite[Sec.~4.1]{Hiptmair2016}.
A second widespread use of plane wave approximation is the reconstruction of
sound fields from point measurements (representing microphones) in experimental
acoustics, see\ \cite{Chardon2014,Jin2015,Verburg2018,Hahmann2021}.

The computation of plane wave approximations is however known to be numerically
unstable, imposing strong limits to the achievable
accuracy\ \cite{PerreyDebain2006,Barucq2021}.
This issue is often understood as an effect of the ill-conditioning of the
linear system that is solved\ \cite[Sec.~4.3]{Hiptmair2016}, which inevitably
arises from the almost-linear dependence of plane waves with similar
propagation directions.
Different techniques have been proposed to overcome this instability,
e.g.\ \cite{Antunes2018,Congreve2018,Barucq2021}.
A well-known recommendation suggests using not more than a prescribed number of
waves in elements of a given size, e.g.\ \cite[Eq.~(14)]{Huttunen2009}:
this keeps the instability at bay but limits the achievable accuracy.

The first purpose of this paper is to shed a new light on the numerical
instability experienced with propagative plane waves and explain the
fundamental mathematical reasons for their limitations as described above.
The second objective is to propose a practical remedy, in the form of including
evanescent plane waves, which may decay exponentially in one direction, and using which one can achieve arbitrary accuracy in a numerically stable way.
The approach is substantiated by theoretical analysis in combination with numerical evidence.
As a first step in this direction, we focus mainly on the model approximation
problem of Helmholtz solutions in the unit disk, using the modal analysis tools
described in Section~\ref{sec:Helmholtz_circular-waves}.

\paragraph{A new point of view on plane wave instability.}

Recent advances in approximation theory, in particular based on the theory of frames and overcomplete bases \cite{Christensen2016}, have shown
that in the presence of ill-conditioning it is not sufficient to study
best approximation errors in order to obtain accurate results in floating-point
arithmetic\ \cite{Adcock2019,Adcock2020}.
Rather, one is led to study the approximation error in relation to the 
coefficient norm, i.e., the norm of the coefficients in the expansion.
The former depends solely on the approximation space, but the coefficient norm
also depends on its chosen representation (i.e.\ on the spanning set used).
We formalize this in Section~\ref{sec:stability-notion} with the notion of
\emph{stable approximation} in Definition~\ref{def:stability}.
The corresponding error analysis in Section~\ref{sec:error-estimate}, based
largely on results in\ \cite{Adcock2019,Adcock2020}, allows us to conclude in
Section~\ref{sec:propagative-plane-waves} that the set of propagative plane
waves does not yield stable approximations.
That is, we can formally state that there exist Helmholtz solutions, with
relatively high Fourier frequency components in the angular coordinate, 
that are well approximated in the
approximation space, but are nevertheless not numerically computable, see
Theorem~\ref{th:instability}.
In the terminology of approximation theory,
no countable set of propagative plane waves is a frame for the space of
Helmholtz solutions.
(We recall that a frame of a Hilbert space is a natural generalization of a basis that allows for redundancy, see \cite{Christensen2016,Adcock2019}.)
In particular, it lacks a so-called lower frame bound which is the property
that ensures that bounded functions can be represented with bounded
coefficients.
This point of view is reminiscent of a similar work in the context of the
Method of Fundamental Solutions\ \cite{Barnett2008}, which pre-dates the
stability analysis from frame theory.

Unfortunately, while the theory in\ \cite{Adcock2019,Adcock2020}
allows to identify this problem, it offers no concrete suggestions as to how it
can be remedied.
If the approximation space remains unchanged, a lower frame bound can only be
established through a change of basis, such as orthogonalization
as in\ \cite{Antunes2018,Congreve2018,Brubeck2021}.
However, that changes the representation: the solution would no longer be
represented in the simple form of an expansion in plane waves, which is a
key feature we would like to retain. Moreover, it may not be straightforward to ensure that the orthogonalization process itself is numerically stable.

\paragraph{The evanescent plane wave remedy.}

To obtain stable representations,
we propose to enrich the approximation space with evanescent
plane waves, i.e.\ plane waves whose direction vectors are complex-valued, 
\( \mathbf{d} \in\mathbb{C}^{n} \), 
as defined in Section~\ref{sec:evanescent-plane-waves}.
The Helmholtz equation is still satisfied provided
\(\mathbf{d} \cdot \mathbf{d} = 1\)
and, importantly, the expression remains simple and cheap to use in
numerical schemes.
Since their modulus decays exponentially in the direction $\Im[\mathbf d]$,
evanescent plane waves are localized in bounded physical domains but also in
the Fourier domain, hence are natural candidates for the approximation of the
high frequency Fourier content exhibited by certain Helmholtz solutions.
This idea is already present in the Wave Based Method, a special class of
Trefftz schemes, see e.g.\ \cite{Deckers2014} for a survey.
Evanescent plane waves also proved particularly effective in the approximation
of interface problems in Trefftz methods, e.g.\ \cite{Luostari2013b}, and the
approximation of integral kernels in some versions of the Fast Multipole
Method\ \cite{Chaillat2015}.

To support the use of evanescent plane wave, we prove in
Section~\ref{sec:continuous-analysis} our main positive result,
Theorem~\ref{th:T_op}, which states that any Helmholtz solution in the unit
disk can be uniquely represented in the form of a continuous superposition of
evanescent plane waves.
This integral representation has the key property of being stable, i.e.~it
features a provably bounded density (in a weighted \(L^{2}\) space), and it can
be seen as a generalization of the classical Herglotz representation, see e.g.\
\cite{Colton1985,Weck2004}.
This result implies that evanescent plane waves form a \emph{continuous} frame
for the space of Helmholtz solutions, see Theorem~\ref{th:continuous_frame}.
While this is stated at the continuous level, such a property paves the way for
successful stable discrete expansions. Indeed, from the stability of the representation one may expect that
discretizations exist with bounded coefficient norms, thereby solving the main issue with propagative plane waves.

\paragraph{A practical numerical recipe.}

In view of practical implementations, we investigate the non-trivial task of
identifying suitable sets of evanescent plane waves which deliver controllable
accuracy in combination with stability.
A heuristic choice for a set of complex directions \(\mathbf{d}\) is suggested
in\ \cite[Sec.~3.2]{Deckers2014} (see also\ \cite[Sec.~3.2]{Hiptmair2016}), but
no mathematical justification is provided.

A first idea to obtain stable \emph{discrete} representations (i.e.~with
bounded coefficients) would be to discretize the continous frame, but
unfortunately, our setting does not fall within the assumptions of existing
results (e.g.~the boundedness assumption of\ \cite[Th.~1.3]{Freeman2019} is not
satisfied).
Instead, the construction of approximation sets described
in Section~\ref{sec:discrete-recipe} is largely based on the optimal sampling
procedure for weighted least-squares recently described by Cohen and
Migliorati\ \cite{Cohen2017} (see also\ \cite{Hampton2015}) and subsequently
used in\ \cite{Migliorati2022}, and it is illustrated with numerical
experiments in Section~\ref{sec:numerics}.
The strategy employed can be interpreted as the construction of a quadrature
rule in the two-dimensional unbounded parametric domain of the integral
representation. 
In practice, the recipe consists in drawing the quadrature points (i.e.~select
the directions of the plane waves) according to an explicit probability density
function~\eqref{eq:density-function} which is a generalization to the
multivariate setting of the Christoffel function density, The latter is sometimes
called spectral function.
While the rigorous numerical analysis of the above approach is thus far incomplete,
we conjecture that such a construction provides stable discrete representations,
see Conjecture~\ref{conj:approx-conjecture}.
In fact, the experimental results in Section~\ref{sec:numerics} show that
the resulting approximations are both controllably accurate and numerically
stable, provided one uses sufficient oversampling and regularization.

Although the recipe is derived from the analysis on the disk, we include
numerical results on a triangular cell showing that it appears to be
effective also for other shapes.
Approximation and stability properties of evanescent plane waves in more general domains
and their use in mesh-based Trefftz methods (e.g.~the Trefftz-Discontinuous
Galerkin method\ \cite[Sec.~2.2]{Hiptmair2016}) will be considered in
future publications (see \cite{Galante2023} for the extension of the theory of
this paper to three-dimensional problems).

\section{Helmholtz equation in circular geometry}\label{sec:Helmholtz_circular-waves}

We first present the setting of the paper and introduce notation.
The proofs of the statements follow standard arguments and are collected in
Appendix~\ref{app:sec2-proofs}.

\subsection{Circular waves}

In this paper we only consider circular two-dimensional geometries.
Without loss of
generality, we assume that the domain is the open unit disk, henceforth
denoted \(B_1:=\{\mathbf x\in\mathbb R^2| \;\|\mathbf x\|<1\}\).
The circular geometry enables modal analysis via separation of variables.
The \emph{circular waves} are the bounded solutions of the Helmholtz equation
in the unit disk that are separable in polar coordinates.
They are sometimes also referred to as \emph{Fourier--Bessel functions} or as
\emph{Generalized Harmonic Polynomials}\ \cite{Melenk1995}.

The results of this paper are fomulated most concisely using the following
\(\kappa\)-dependent scalar product and norm:
for any \(u, v\in H^{1}(B_{1})\),
\begin{equation}\label{eq:Bnorm}
    \left(u,\, v\right)_{\mathcal{B}} :=
    \left(u,\, v\right)_{L^{2}(B_{1})}
    +\kappa^{-2}
    \left(\nabla u,\, \nabla v\right)_{L^{2}(B_{1})},
    \qquad\qquad
    \|u\|_{\mathcal{B}}^2 :=
    \left(u,\, u\right)_{\mathcal{B}}.
\end{equation}
\begin{definition}[Circular waves]
    We define, for any \(p\in\mathbb{Z}\)
    \begin{equation}\label{eq:btilde_p}
        \begin{cases}
            \tilde{b}_{p} (\mathbf{x})
            := J_{p}(\kappa r)e^{\imath p\theta},
            \quad\forall \mathbf{x}=\left(r,\,\theta\right)\in B_{1},\\
            b_{p} := \beta_{p}\tilde{b}_{p},
            \quad\text{where}\quad
            \beta_{p} := \|\tilde{b}_{p}\|_{\mathcal{B}}^{-1},
        \end{cases}
        \quad \text{and} \qquad
        \mathcal{B} := \overline{
            \operatorname{span}
            \left\{{b}_{p}\right\}_{p\in\mathbb{Z}}
        }^{\|\cdot\|_{\mathcal{B}}}
        \subsetneq H^{1}(B_{1}).
    \end{equation}
\end{definition}
In this definition, \(J_{p}\) is the usual Bessel function of the first
kind\ \cite[Eq.~(10.2.2)]{DLMF} and $\imath$ the imaginary unit $\imath^2=-1$.
The space \(\mathcal{B}\) is a strict subspace of \(H^{1}(B_{1})\), whose
elements are solutions of the Helmholtz equation, see
Lemma~\ref{lem:B_space_Helmholtz_solution} below.
A representation of the real part of some circular waves is given in
Figure~\ref{fig:bptilde_k16_real}.
We will refer to the circular waves with mode number \(|p| < \kappa\) as
\emph{propagative} modes.
The `energy' of such modes is distributed in the bulk of the domain.
On the contrary, for \(|p| \gg \kappa\), the circular waves are termed
\emph{evanescent}.
Their `energy' is concentrated near the boundary of the domain.
In between, the waves such that \(|p| \approx \kappa\) are called
\emph{grazing} modes.
\begin{figure}
    \centering
    \begin{subfigure}{0.3\textwidth}
        \centering
        \includegraphics[width=\textwidth]{./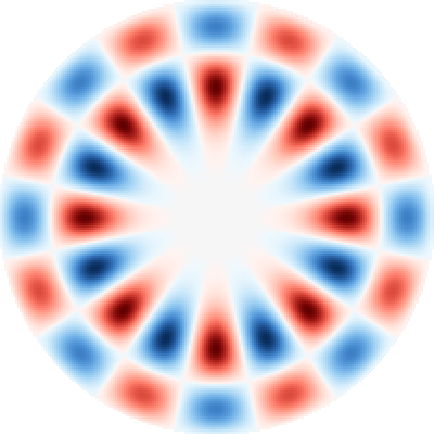}
        \caption{Propagative \(p=8\).}\label{fig:bptilde_k16_real_p8}
    \end{subfigure}
    \begin{subfigure}{0.3\textwidth}
        \centering
        \includegraphics[width=\textwidth]{./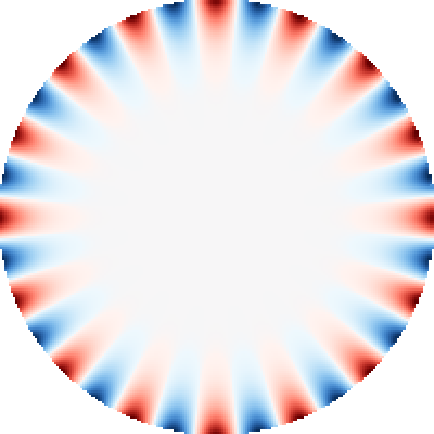}
        \caption{Grazing \(p=16\).}\label{fig:bptilde_k16_real_p16}
    \end{subfigure}
    \begin{subfigure}{0.3\textwidth}
        \centering
        \includegraphics[width=\textwidth]{./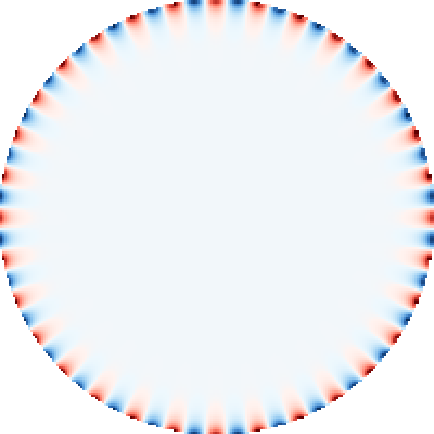}
        \caption{Evanescent \(p=32\).}\label{fig:bptilde_k16_real_p32}
    \end{subfigure}
    \caption{
	Real part of the circular waves \(\tilde{b}_{p}\) for three
	different modes (wavenumber \(\kappa=16\)).
	}\label{fig:bptilde_k16_real}
\end{figure}
\begin{lemma}\label{lem:b_Hilbert_basis}
    The space
    \(\left(\mathcal{B},\, \|\cdot\|_{\mathcal{B}}\right)\)
    is a Hilbert space
    and the family \(\{b_{p}\}_{p\in\mathbb{Z}}\) is a Hilbert basis
    (i.e.~an orthonormal basis):
    \begin{equation}
        \left(b_{p},\, b_{q}\right)_{\mathcal{B}}
        =\delta_{pq},
        \qquad\forall p,q\in \mathbb{Z},
        \qquad\text{and}\qquad
        u = \sum_{p\in\mathbb{Z}}
        \left(u,\, b_{p}\right)_{\mathcal{B}} b_{p},
        \qquad\forall u\in \mathcal{B}.
    \end{equation}
\end{lemma}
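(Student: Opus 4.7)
The plan is to separate the three claims packaged in the lemma: (i) $\mathcal{B}$ is a Hilbert space, (ii) $\{b_p\}_{p\in\mathbb{Z}}$ is orthonormal, and (iii) its closed linear span is all of $\mathcal{B}$. Claims (i) and (iii) will follow almost immediately from the definition; the computational content is the orthogonality in (ii), which reduces to Fourier orthogonality in the angular variable.

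First, I would observe that $(\cdot,\cdot)_{\mathcal{B}}$ is an inner product on $H^1(B_1)$ whose induced norm is equivalent (with constants depending on $\kappa$) to the standard $H^1$-norm. Consequently $(H^1(B_1),\|\cdot\|_{\mathcal{B}})$ is itself a Hilbert space, and $\mathcal{B}$, being defined in \eqref{eq:def-B-space} as the closure of a subspace in this norm, is a closed subspace, hence a Hilbert space in its own right. This settles the first assertion and also claim (iii): by construction, $\operatorname{span}\{b_p\}_{p\in\mathbb{Z}}$ is dense in $\mathcal{B}$, so that every $u\in\mathcal{B}$ expands as the claimed Fourier series once orthonormality is established.

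For the orthonormality computation, I would work in polar coordinates. Writing $\tilde b_p = J_p(\kappa r)e^{\imath p\theta}$, one has
\begin{equation*}
    (\tilde b_p,\tilde b_q)_{L^2(B_1)} = \int_0^1 J_p(\kappa r)J_q(\kappa r)\,r\,\mathrm dr \int_0^{2\pi} e^{\imath(p-q)\theta}\,\mathrm d\theta,
\end{equation*}
and the angular integral vanishes unless $p=q$. Similarly, in polar coordinates $\nabla\tilde b_p=\kappa J_p'(\kappa r)e^{\imath p\theta}\,\mathbf e_r+\imath p r^{-1}J_p(\kappa r)e^{\imath p\theta}\,\mathbf e_\theta$, so $(\nabla\tilde b_p,\nabla\tilde b_q)_{L^2(B_1)}$ again factors as a radial integral times the same angular integral, and hence vanishes for $p\neq q$. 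Thus $(\tilde b_p,\tilde b_q)_{\mathcal{B}}=0$ for $p\neq q$, and the normalization $\beta_p=\|\tilde b_p\|_{\mathcal{B}}^{-1}$ ensures $(b_p,b_p)_{\mathcal{B}}=1$. (One should note in passing that $\|\tilde b_p\|_{\mathcal{B}}$ is strictly positive and finite: positivity follows from $\tilde b_p\not\equiv 0$, finiteness from $J_p$ being analytic on the compact set $[0,\kappa]$, so no issues with the factor $p^2/r^2$ near the origin because $J_p(\kappa r)=O(r^{|p|})$.)

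The only step requiring any care is verifying that $\beta_p$ is well defined, i.e.\ that $\tilde b_p\in H^1(B_1)$ with nonzero norm; this is standard but should be explicitly flagged. I do not expect any genuine obstacle here, since all the structure is provided by the separation of variables and the density of the span is built into the definition of $\mathcal B$.
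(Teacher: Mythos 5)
Your proof is correct, and its core mechanism --- orthogonality of the angular exponentials $e^{\imath p\theta}$ on $[0,2\pi]$ --- is the same one the paper uses. The one place where you genuinely diverge is the gradient term: you compute $\nabla\tilde b_p$ explicitly in polar coordinates and observe that $(\nabla\tilde b_p,\nabla\tilde b_q)_{L^2(B_1)^2}$ again factors into a radial integral times $\int_0^{2\pi}e^{\imath(p-q)\theta}\,\mathrm d\theta$, whereas the paper integrates by parts, using $-\Delta\tilde b_p=\kappa^2\tilde b_p$ to reduce the gradient term to $\kappa^2(\tilde b_p,\tilde b_q)_{L^2(B_1)}$ plus a boundary term on $\partial B_1$ whose vanishing for $p\neq q$ is again angular. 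Your route is more elementary and avoids trace terms, and your remark on the integrability of the $pq\,r^{-2}J_p(\kappa r)J_q(\kappa r)\,r$ contribution near $r=0$ (via $J_p(\kappa r)=O(r^{|p|})$) correctly disposes of the only delicate point it introduces. The paper's integration by parts has a payoff beyond this lemma: the identities \eqref{eq:semi-norm-H1-bp}--\eqref{eq:bnd-term-betap} it produces are reused in Lemma~\ref{lem:beta_asymptotic} to derive the closed-form expression \eqref{eq:beta_explicit} for $\beta_p$ and its asymptotics, which your direct computation does not yield. You are also more explicit than the paper about the two non-computational claims --- that $\mathcal B$ is a Hilbert space (a closed subspace of $H^1(B_1)$ under a norm equivalent to the $H^1$ norm) and that completeness of $\{b_p\}_{p\in\mathbb Z}$ is built into the definition \eqref{eq:def-B-space} --- which the paper compresses into ``we only need to prove orthogonality''.
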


The main reason for introducing circular waves is the possibility to use them
to expand any Helmholtz solution on the disk, as we show in the next lemma.
Related results for more general domains and different norms are available, see
e.g.\ \cite[Sec. 3.1]{Hiptmair2016}.

\begin{lemma}\label{lem:B_space_Helmholtz_solution}
    \(u\in H^{1}(B_{1})\) satisfies the Helmholtz equation~\eqref{eq:Helmholtz}
    if and only if \(u\in\mathcal{B}\).
\end{lemma}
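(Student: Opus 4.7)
The plan is to prove the two implications separately, with the forward direction (Helmholtz $\Rightarrow$ membership in $\mathcal{B}$) requiring the bulk of the work.

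For the easy direction, if $u \in \mathcal{B}$, I would first verify by direct computation that each $\tilde{b}_p$ is a classical solution of the Helmholtz equation: this is immediate since $J_p$ satisfies Bessel's equation and the Laplacian in polar coordinates separates accordingly. Then, because $\|\cdot\|_{\mathcal{B}}$ is equivalent to the standard $H^1(B_1)$-norm up to $\kappa$-dependent constants, any $\|\cdot\|_{\mathcal{B}}$-Cauchy sequence of finite linear combinations of the $b_p$ also converges in $H^1(B_1)$. Since $-\Delta - \kappa^2$ is bounded from $H^1(B_1)$ into $H^{-1}(B_1)$, the limit still lies in the kernel of this operator, i.e.\ satisfies \eqref{eq:Helmholtz} weakly.

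For the forward direction, suppose $u \in H^1(B_1)$ satisfies \eqref{eq:Helmholtz}. The strategy is to perform an angular Fourier decomposition $u_p(r) := \frac{1}{2\pi}\int_0^{2\pi} u(r,\theta) e^{-\imath p\theta}\,\mathrm{d}\theta$ and show that each radial profile must be a scalar multiple of $J_p(\kappa r)$. Testing the Helmholtz equation in polar coordinates against $e^{\imath p\theta}$ yields, in the weak sense on $(0,1)$, the Bessel equation $-\frac{1}{r}(r u_p')' + (p^2/r^2 - \kappa^2)u_p = 0$, whose solution space is spanned by $J_p(\kappa \cdot)$ and $Y_p(\kappa \cdot)$. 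The $H^1(B_1)$-membership of $u$ forces $\int_0^1 (|u_p|^2 + |u_p'|^2)\,r\,\mathrm{d}r < \infty$, and this integrability is incompatible with any nonzero $Y_p$-component (algebraic singularity $r^{-|p|}$ for $p\neq 0$, logarithmic singularity for $p=0$ with $Y_0'(r) \sim 1/r$). Hence $u_p(r) = \alpha_p J_p(\kappa r)$ for some $\alpha_p \in \mathbb{C}$.

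To conclude, I would invoke the orthogonality computations from the proof of Lemma~\ref{lem:b_Hilbert_basis} together with Parseval's identity in $\theta$ to obtain $\|u\|_{\mathcal{B}}^2 = \sum_{p\in\mathbb{Z}} |\alpha_p|^2 \|\tilde{b}_p\|_{\mathcal{B}}^2$. Since $u \in H^1(B_1)$ and $\|\cdot\|_{\mathcal{B}}$ is $\kappa$-equivalent to $\|\cdot\|_{H^1(B_1)}$, this sum is finite, so the partial sums $\sum_{|p|\le N} \alpha_p \beta_p^{-1} b_p$ converge to $u$ in $\|\cdot\|_{\mathcal{B}}$, placing $u$ in $\mathcal{B}$ by the definition \eqref{eq:def-B-space}.

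The main obstacle I anticipate is the rigorous exclusion of the $Y_p$-component: one must argue that the weak Helmholtz equation really does reduce, mode by mode, to a weak Bessel equation on $(0,1)$ whose only $L^2_r$-admissible solution is proportional to $J_p$. The $p=0$ case is the most delicate because $Y_0$ is itself square-integrable with the weight $r\,\mathrm{d}r$, so the exclusion must use the gradient part of the $H^1$-norm through $Y_0'(r) \sim 1/r$. Once this regularity argument is in place, the Parseval-based conclusion is routine.
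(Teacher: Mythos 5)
Your proof is correct, but your forward direction takes a genuinely different route from the paper's. The paper does not perform a separation of variables on $u$ itself: instead it forms the Robin trace $g=\partial_{\mathbf n}u-\imath\kappa u\in H^{-1/2}(\partial B_1)$, expands $g$ in a Fourier series on the circle, matches each truncation $g_P$ exactly by an element $u_P\in\operatorname{span}\{b_p\}_{|p|<P}$ (which is possible because $J_p'(\kappa)-\imath J_p(\kappa)\neq 0$), and then invokes the well-posedness of the Robin boundary value problem to get $\|u-u_P\|_{\mathcal B}\le C\|g-g_P\|_{H^{-1/2}(\partial B_1)}\to 0$. That argument is shorter but leans on an external well-posedness result as a black box. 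Your route — reducing the weak Helmholtz equation mode by mode to the Bessel ODE, identifying the two-dimensional solution space $\operatorname{span}\{J_p(\kappa\cdot),Y_p(\kappa\cdot)\}$, and excluding the $Y_p$ component from $H^1$-integrability (correctly isolating $p=0$ as the case where the exclusion must come from the gradient term, since $Y_0$ itself is in $L^2(r\,\mathrm dr)$) — is more elementary and self-contained, at the price of having to justify the mode-by-mode reduction and the commutation of the angular projection with the gradient in the final Parseval step. Both the closedness-of-the-kernel argument for the easy direction and the concluding Parseval computation match the paper's structure; only the mechanism for producing the expansion of $u$ differs, and both mechanisms are sound.
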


Circular and spherical waves have been used as basis functions in many
Trefftz schemes, see\ \cite[Sec. 3.1]{Hiptmair2016} and the references therein.
An interesting feature of such waves is that the approximation sets are
naturally hierarchical.

\subsection{Asymptotics of normalization coefficients}

The normalization coefficients \(\beta_{p}\) in~\eqref{eq:btilde_p} grow
super-exponentially with \(|p|\) after a pre-asymptotic regime up to
\(|p| \approx \kappa\).
The precise asymptotic behavior is given by the following lemma.

\begin{lemma}\label{lem:beta_asymptotic}
    For all \(p \in \mathbb{Z}\),
    \begin{equation}\label{eq:beta_explicit}
        \beta_{p} = \left(
        2\pi \left[J_p^2(\kappa)
        - J_{p-1}(\kappa)J_{p+1}(\kappa)
        + J_{p}'(\kappa)J_p(\kappa) / \kappa\right]
        \right)^{-1/2}
        \underset{|p|\to +\infty}{\sim} \kappa
        \left(\frac{2}{e\kappa}\right)^{|p|}\; |p|^{|p|}.
    \end{equation}
\end{lemma}

\begin{remark}\label{rmk:normalization-btildep}
    The circular waves are normalized using the rather natural \(\mathcal{B}\)
    norm~\eqref{eq:Bnorm}, i.e.\ the wavenumber-weighted $H^1(B_1)$ norm.
    The use of $L^2(B_1)$ or other Sobolev norms in the definition of $\beta_p$
    would not modify the exponential dependence on $|p|$ of the asymptotics
    \eqref{eq:beta_explicit}, but it does introduce an additional moderate power of
    $|p|$, as is visible in the proof in Appendix~\ref{app:sec2-proofs}.
\end{remark}

\section{Stable numerical approximation}\label{sec:stability-notion}

The purpose of this section is to explain the crucial notion of stable
approximation, which we could informally call ``approximation with small
coefficients'', and to clarify how it enables accurate numerical computations.
Our approach builds on the results in\ \cite{Adcock2019,Adcock2020} which
highlight the importance for stability of having representations with bounded
coefficients.
We also describe the practical procedure, a regularized sampling method,
that we use to investigate the existence of stable numerical approximations of
Helmholtz solutions in this paper.
An error bound is formulated in Proposition~\ref{prop:approx-error-estimate}.

\subsection{The notion of stable approximation}

Let us consider a sequence of finite approximation sets in \(\mathcal{B}\)
\begin{equation}\label{eq:generic-approx-sets}
    \boldsymbol{\Phi} := \{\boldsymbol{\Phi}_{k}\}_{k \in \mathbb{N}}
    \qquad\text{where}\qquad
    \boldsymbol{\Phi}_{k} := \{\phi_{k,l}\}_{l},
    \quad|\boldsymbol{\Phi}_k|<\infty,
    \quad \forall k \in \mathbb{N},
\end{equation}
and for each \(k,l\), \(\phi_{k,l} \in \mathcal{B}\) is a solution of the
Helmholtz equation~\eqref{eq:Helmholtz} in the unit disk.
These sets need not be nested.
When the $\{\phi_{k,l}\}_l$ are linearly independent,
$\boldsymbol\Phi_k$ is a basis of the approximation space used for numerical
computations. However, more generally, we also allow for linearly dependent
sets.
Associated to any set \(\boldsymbol{\Phi}_{k}\) for some \(k \in \mathbb{N}\),
we define the synthesis operator
\begin{equation}
    \mathcal{T}_{\boldsymbol{\Phi}_{k}} \;:\;
    \mathbb{C}^{|\boldsymbol{\Phi}_{k}|} \to \mathcal{B},\ 
    \boldsymbol{\mu} = \{\mu_{l}\}_{l}
    \mapsto
    \sum_{l} \mu_{l}\phi_{k,l}.
\end{equation}
Here and in the following, we use the notation \(|X|\)
to indicate the cardinality of the set \(X\).
We are now ready to define a notion of \emph{stable approximation}, which is at
the heart of this paper.

\begin{definition}[Stable approximation]\label{def:stability}
    The sequence \(\boldsymbol{\Phi}\) of approximation
    sets~\eqref{eq:generic-approx-sets}
    is said to be a \emph{stable approximation} for \(\mathcal{B}\) if,
    for any tolerance \(\eta > 0\),
    there exist a stability exponent \(s \geq 0\)
    and a stability constant \(C_{\mathrm{stb}} \geq 0\)
    such that
    \begin{equation}\label{eq:stability}
        \forall u \in \mathcal{B},\ 
        \exists\, \boldsymbol{\Phi}_{k} \in \boldsymbol{\Phi},\ 
        \boldsymbol{\mu} \in \mathbb{C}^{|\boldsymbol{\Phi}_{k}|}
        \quad\text{such that}\quad
        \begin{cases}
            \|u - \mathcal{T}_{\boldsymbol{\Phi}_{k}}{\boldsymbol{\mu}}\|_{\mathcal{B}}
            \leq \eta \|u\|_{\mathcal B}
            \quad\text{and}\\[1mm]
            \|\boldsymbol{\mu}\|_{\ell^{2}} \leq C_{\mathrm{stb}}
            |\boldsymbol{\Phi}_{k}|^{s} \|u\|_{\mathcal B}.
        \end{cases}
    \end{equation}
\end{definition}

Having a sequence of stable approximation sets means that one can approximate
any Helmholtz solution to a given accuracy in the form of a finite expansion 
\(\mathcal{T}_{\boldsymbol{\Phi}_{k}}{\boldsymbol{\mu}}\)
where the coefficients \(\boldsymbol{\mu}\) have bounded \(\ell^{2}\)-norm.
This bound on the coefficients admits a polynomial growth in the number
$|\boldsymbol{\Phi}_{k}|$ of terms in the expansion, but not an exponential
growth.
The stability exponent \(s \geq 0\) of a stable approximation sequence controls
the growth of the coefficient norm \(\|\mu\|_{\ell^{2}}\): the smaller $s$ the
more stable the sequence.
This notion of stability is not related to a space but rather to a particular
sequence of sets that are used to represent the numerical approximation.
In practice the computation of approximations using stable sequences may lead
to ill-conditioned linear systems if there is redundancy in the approximation
sets.
The rest of this section shows that, in spite of possible ill-conditioning,
stable sequences lead to accurate approximations, thanks to the boundedness of
the expansion coefficients.

The simplest stable approximation is provided by the truncation of any
orthonormal basis of $\mathcal B$, in which case $s=0$ and
$C_{\mathrm{stb}}=1$, e.g.\ the circular waves
$\boldsymbol\Phi_k=\{b_p\}_{|p|\le k}$.
However, in view of the application to Trefftz methods on polygonal meshes, we
describe two examples of approximations sets of the
type of~\eqref{eq:generic-approx-sets}:
propagative plane waves (PPWs) in~\eqref{eq:propagative-pw-sets}
and evanescent plane waves (EPWs) in~\eqref{eq:evanescent-pw-sets}.
They exhibit different stability properties.
In Theorem~\ref{th:instability} we prove rigorously that PPWs are necessarily unstable.
In contrast, numerical evidence from Section~\ref{sec:numerics}
indicates that the sets of EPWs constructed following the
numerical recipe that we propose in Section~\ref{sec:conj-stability} are
stable.

\subsection{Boundary sampling method}\label{sec:Sampling}

We explain how we compute the coefficients in practice, which builts on results
in\ \cite{Huybrechs2019}.
All the numerical results obtained in this paper are obtained using the method
described here.

Let us introduce a `trace operator' \(\gamma\), namely a (continuous) linear
operator defined on \(H^{1}(B_{1})\) such that the following problem is well-posed:
find \(u \in H^{1}(B_{1})\) such that
\begin{equation}
    -\Delta u - \kappa^{2} u = 0, \quad\text{in}\ B_{1},
    \qquad\text{and}\qquad
    \gamma u = g, \quad\text{on}\ \partial B_{1},
\end{equation}
for some suitable boundary data \(g\).
Examples of such a trace operator \(\gamma\) are:
the Dirichlet trace operator, extension to \(H^{1}(B_{1})\) of
\(u \mapsto u|_{\partial B_{1}}\),
when \(\kappa^{2}\) is not an eigenvalue of the Dirichlet Laplacian;
the Neumann trace operator, extension to \(H^{1}(B_{1})\) of
\(u \mapsto \partial_{\mathbf{n}}u\),
when \(\kappa^{2}\) is not an eigenvalue of the Neumann Laplacian;
the Robin trace operator, extension to \(H^{1}(B_{1})\) of
\(u \mapsto \partial_{\mathbf{n}}u - \imath \kappa u|_{\partial B_{1}}\)
(without assumptions on the wavenumber \(\kappa\)).

We aim at reconstructing a solution \(u \in \mathcal{B}\)
having access to its trace \(\gamma u\) on the boundary
for such a `good' trace operator \(\gamma\). 
For simplicity, we use the Dirichlet trace operator and therefore assume that
\(\kappa^{2}\) is away from the eigenvalues of the Dirichlet Laplacian.
Further we will assume that \(u \in \mathcal{B} \cap C^{0}(\overline{B_{1}})\),
so that it makes sense to consider point evaluations of the Dirichlet trace.

The reconstruction process is \emph{not} the main subject of the paper
and we stress that we make these two assumptions mainly for convenience and
definiteness (in particular for the numerical experiments).
One can consider alternative reconstruction procedures using other types of
data, such as point evaluation in the bulk of the domain or by taking inner
product of the solution with suitable test functions.
See\ \cite{Chardon2014} for a more general discussion on the subject
of reconstructing Helmholtz solutions from point evaluations.

Let \(u \in \mathcal{B} \cap C^{0}(\overline{B_{1}})\)
be the target of the approximation problem.
We look for a set of coefficients
\(\boldsymbol{\xi} \in \mathbb{C}^{|\boldsymbol{\Phi}_{k}|}\)
for a given approximation set \(\boldsymbol{\Phi}_{k}\)
(introduced in~\eqref{eq:generic-approx-sets})
such that \(\mathcal{T}_{\boldsymbol{\Phi}_{k}}\boldsymbol{\xi} \approx u\).
We also assume that for any \(l\),
\(\phi_{k,l} \in \mathcal{B} \cap C^{0}(\overline{B_{1}})\).
Define the set of \(S \geq |\boldsymbol{\Phi}_{k}|\) sampling points
\(\{\mathbf{x}_{s}\}_{s=1}^{S}\) on the unit circle
parametrized by the angle
\begin{equation}\label{eq:boundary_sampling_nodes}
    \theta_{s} := \frac{2\pi s}{S},
    \qquad 1\leq s \leq S.
\end{equation}
Let us introduce
the matrix \(A = (A_{s,l})_{s,l} \in \mathbb{C}^{S \times |\boldsymbol{\Phi}_{k}|}\)
and the vector \(\mathbf{b} = (\mathbf{b}_{s})_{s} \in \mathbb{C}^{S}\)
such that
\begin{equation}\label{eq:def_matrix_rhs}
    A_{s,l} =
    \gamma(\phi_{k,l})(\mathbf{x}_{s}),
    \quad
    \mathbf{b}_{s} = (\gamma u)(\mathbf{x}_{s}),
    \qquad\qquad 1\leq l\leq |\boldsymbol{\Phi}_{k}|,\ 1\leq s\leq S.
\end{equation}
The sampling method then consists in approximately solving the 
rectangular linear system
\begin{equation}\label{eq:linear-system}
    A\boldsymbol{\xi} = \mathbf{b}.
\end{equation}

\subsection{Regularization}\label{sec:regularization}

It often happens that the matrix \(A\) is ill-conditioned (see
Section~\ref{sec:instability}).
In finite precision arithmetic, severe ill-conditioning may prevent
us from obtaining accurate approximations.
However, the type of ill-conditioning encountered here is benign if it arises
only from the redundancy of the approximating functions.
In that case, ill-conditioning is associated with the numerical
non-uniqueness of the solution of the linear system, yet all associated
expansions may approximate the target to similar accuracy.  
If among those expansions there exist some with small coefficient norms, then
it is possible to numerically compute an accurate approximation.
To this aim, we rely on the combination of oversampling and regularization
techniques developed in\ \cite{Adcock2019,Adcock2020}.
Alternative techniques to curb ill-conditioning can be found in the
literature, see\ \cite{Antunes2018} where a suitable change of basis is used that
works well for circular geometries, \cite{Congreve2018} which uses
orthogonalization, and\ \cite{Barucq2021,Huybrechs2019} in the context of
Trefftz methods.

The first step is to compute the Singular Value Decomposition (SVD) of the
matrix \(A\), namely
\begin{equation}
    A = U\Sigma V^{*}.
\end{equation}
Let us denote by \((\sigma_{m})_{m}\) for \(m=1,\dots,|\boldsymbol{\Phi}_{k}|\)
the singular values of \(A\), assumed to be sorted in descending order.
For notational clarity, the largest singular value is renamed
\(\sigma_{\max} := \sigma_{1}\).
Then, the regularization amounts to trimming the tail of \emph{relatively}
small singular values, which are approximated by zero.
Let \(\epsilon \in (0, 1]\) be a chosen threshold,
we denote by \(\Sigma_{\epsilon}\) the approximation of the diagonal matrix
\(\Sigma\) where all diagonal elements \(\sigma_{m}\) such that
\(\sigma_{m} < \epsilon \sigma_{\max}\) are replaced by zero.
This leads to the approximate factorization
\begin{equation}\label{eq:mat_approx_SVDr}
    A_{S,\epsilon} := U\Sigma_{\epsilon}V^{*},
\end{equation}
of the matrix \(A\).
An approximate solution to~\eqref{eq:linear-system} is then obtained by
\begin{equation}\label{eq:solution_SVDr}
    \boldsymbol{\xi}_{S,\epsilon} :=
    A_{S,\epsilon}^{\dagger} \mathbf{b} =
    V\Sigma_{\epsilon}^{\dagger} U^{*}
    \,\mathbf{b}.
\end{equation}
Here \(\Sigma_{\epsilon}^{\dagger}\) denotes the pseudo-inverse of the matrix
\(\Sigma_{\epsilon}\), namely the diagonal matrix with
\((\Sigma_{\epsilon}^{\dagger})_{j,j}=(\Sigma_{j,j})^{-1}\) if
\(\Sigma_{j,j}\ge\epsilon\sigma_{\max}\) and
\((\Sigma_{\epsilon}^{\dagger})_{j,j}=0\) otherwise.
Robust computation of \(\boldsymbol{\xi}_{S,\epsilon}\)
requires to compute the right-hand-side of~\eqref{eq:solution_SVDr}
from right to left, namely 
\(\boldsymbol{\xi}_{S,\epsilon} :=
V\left(\Sigma_{\epsilon}^{\dagger} \left(U^{*}
\,\mathbf{b}\right)\right)\), in order to avoid mixing small and large
values on the diagonal of $\Sigma_{\epsilon}^{\dagger}$.

\subsection{Error estimates for the sampling method with regularization}\label{sec:error-estimate}

With the regularization technique described above
together with \emph{oversampling}, i.e., \(S\) larger than
\(|\boldsymbol{\Phi}_{k}|\), accurate approximations can be effectively
computed, provided the set sequence is a stable approximation in the sense of
Definition~\ref{def:stability}.
This broad statement is the main message of\ \cite[Th.~5.3]{Adcock2019}
and\ \cite[Th.~1.3 and 3.7]{Adcock2020},
and is the starting point of our quest of stable approximation sets for 
Helmholtz solutions.
More precisely, the following proposition is a rewording
of\ \cite[Th.~3.7]{Adcock2020} from the context of \emph{generalized sampling}
to our setting, with the notations just introduced.
See Appendix~\ref{app:sec3-proofs} for the proof.

\begin{proposition}\label{prop:approx-error-estimate}
    Let \(\gamma\) be the Dirichlet trace operator and
    \(u \in \mathcal{B} \cap C^{0}(\overline{B_{1}})\).
    Given some approximation set \(\boldsymbol{\Phi}_{k}\) (\(k \in \mathbb{N}\) fixed)
    such that for any \(l\),
    \(\phi_{k,l} \in \mathcal{B} \cap C^{0}(\overline{B_{1}})\);
    a sampling set of size \(S \in \mathbb{N}\)
    as described in~\eqref{eq:boundary_sampling_nodes}
    and some regularization parameter \(\epsilon \in (0,1]\),
    we consider the approximate solution of the linear
    system~\eqref{eq:linear-system}, namely
    \(\boldsymbol{\xi}_{S,\epsilon} \in \mathbb{C}^{|\boldsymbol{\Phi}_{k}|}\)
    as defined in~\eqref{eq:solution_SVDr}.
    Then
    \begin{equation}\label{eq:approx-error-estimate1}
        \begin{aligned}
            & \forall \boldsymbol{\mu} \in \mathbb{C}^{|\boldsymbol{\Phi}_{k}|},
            \,\exists S_{0} > 0, \forall S \geq S_{0}, \\
            & \| \gamma(
                u -
                \mathcal{T}_{\boldsymbol{\Phi}_{k}}\boldsymbol{\xi}_{S,\epsilon})
            \|_{L^{2}(\partial B_{1})}
            \leq
            3 \| \gamma(
                u -
                \mathcal{T}_{\boldsymbol{\Phi}_{k}}\boldsymbol{\mu})
            \|_{L^{2}(\partial B_{1})}
            + 2 \sqrt{\pi} \, \frac{\epsilon \,\sigma_{\max}}{\sqrt{S}}
            \|\boldsymbol{\mu}\|_{\ell^{2}}.
        \end{aligned}
    \end{equation}
    Assume moreover that $\kappa^2$ is not an eigenvalue of the Dirichlet
    Laplacian in $B_1$.
    Then,
    there exists a constant
    \(C_{\mathrm{err}}\) independent of \(u\) and \(\boldsymbol{\Phi}_{k}\),
    such that
    \begin{equation}\label{eq:approx-error-estimate2}
        \begin{aligned}
            & \forall \boldsymbol{\mu} \in \mathbb{C}^{|\boldsymbol{\Phi}_{k}|},
            \,\exists S_{0} > 0, \ \forall S \geq S_{0}, \\
            & \|u - \mathcal{T}_{\boldsymbol{\Phi}_{k}}\boldsymbol{\xi}_{S,\epsilon}\|_{L^{2}(B_{1})}
            \leq C_{\mathrm{err}} 
            \Big(
                \| 
                    u -
                    \mathcal{T}_{\boldsymbol{\Phi}_{k}}\boldsymbol{\mu}
                \|_{\mathcal{B}}
                + \frac{\epsilon \,\sigma_{\max}}{\sqrt{S}} \|\boldsymbol{\mu}\|_{\ell^{2}}
            \Big).
        \end{aligned}
    \end{equation}
\end{proposition}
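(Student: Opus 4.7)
The plan is to transfer the abstract generalized sampling theorem of Adcock--Huybrechs, cited as Theorem~3.7 of~\cite{Adcock2020}, into our concrete Helmholtz/boundary-sampling setting, by carefully identifying the relevant norms and verifying the quadrature hypothesis. Because our equispaced nodes~\eqref{eq:boundary_sampling_nodes} correspond to the trapezoidal rule on the unit circle, the bridge between the discrete residual $\|A\boldsymbol{\xi}-\mathbf{b}\|_{\ell^{2}}$ appearing in~\eqref{eq:linear-system} and the continuous trace residual $\|\gamma(u-\mathcal{T}_{\boldsymbol{\Phi}_{k}}\boldsymbol{\xi})\|_{L^{2}(\partial B_{1})}$ is classical Marcinkiewicz--Zygmund theory for trigonometric polynomials.

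The first step is therefore to establish the discrete-to-continuous norm equivalence. Writing a trace $\gamma v$ as its Fourier series on $\partial B_1$, exactness of the trapezoidal rule for trigonometric polynomials of degree strictly less than $S$ yields $\tfrac{2\pi}{S}\sum_{s=1}^{S}|\gamma v(\mathbf x_s)|^{2}=\|\gamma v\|_{L^{2}(\partial B_{1})}^{2}$ on such polynomials. Since $\mathrm{range}(\mathcal T_{\boldsymbol\Phi_k})$ is a \emph{fixed} finite-dimensional subspace of $\mathcal B$ with continuous traces, and since $\gamma u$ has a convergent Fourier expansion, for $S$ large enough the tail contribution to both sides can be absorbed, so that for $S\ge S_0$ a two-sided bound of the form $\tfrac12\|\gamma v\|_{L^{2}(\partial B_{1})}^{2}\le \tfrac{2\pi}{S}\sum_s|\gamma v(\mathbf x_s)|^2\le\tfrac32\|\gamma v\|_{L^{2}(\partial B_{1})}^{2}$ holds on the relevant finite-dimensional subspace containing all residuals $\gamma(u-\mathcal T_{\boldsymbol\Phi_k}\boldsymbol\mu)$ of interest.

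Second, I would invoke the variational characterization of the truncated pseudoinverse~\eqref{eq:solution_SVDr}. Decomposing any candidate $\boldsymbol{\mu}$ in the right-singular basis of $A$ as $\boldsymbol\mu=\boldsymbol\mu_{\ge\epsilon}+\boldsymbol\mu_{<\epsilon}$, along singular values above and below $\epsilon\sigma_{\max}$, one checks that $\boldsymbol\xi_{S,\epsilon}$ reproduces $A\boldsymbol\mu_{\ge\epsilon}$ exactly while the discrepancy $\|A(\boldsymbol\xi_{S,\epsilon}-\boldsymbol\mu)\|_{\ell^{2}}$ is controlled, via the minimization property and by $\|A\boldsymbol\mu_{<\epsilon}\|_{\ell^{2}}\le\epsilon\sigma_{\max}\|\boldsymbol\mu_{<\epsilon}\|_{\ell^{2}}\le\epsilon\sigma_{\max}\|\boldsymbol\mu\|_{\ell^{2}}$. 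Combined with the triangle inequality and the Marcinkiewicz--Zygmund equivalence of the previous step, this yields~\eqref{eq:approx-error-estimate1} with the explicit constants $3$ and $2\sqrt{\pi}$ arising directly from~\cite[Th.~3.7]{Adcock2020}.

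Finally, for~\eqref{eq:approx-error-estimate2}, the first term on the right of~\eqref{eq:approx-error-estimate1} is bounded by $C\|u-\mathcal T_{\boldsymbol\Phi_k}\boldsymbol\mu\|_{\mathcal B}$ through continuity of the Dirichlet trace $H^{1}(B_{1})\to L^{2}(\partial B_{1})$ combined with the definition~\eqref{eq:Bnorm} of $\|\cdot\|_{\mathcal B}$. Moreover, $u-\mathcal T_{\boldsymbol\Phi_k}\boldsymbol\xi_{S,\epsilon}\in\mathcal B$ is itself a Helmholtz solution, and since $\kappa^{2}$ is not a Dirichlet eigenvalue, the Dirichlet-to-solution map admits a bounded inverse $L^{2}(\partial B_{1})\to L^{2}(B_{1})$; on the disk this is transparent from the circular-wave diagonalization, each Fourier mode $p$ scaling by $J_{p}(\kappa)$ which is bounded away from zero under the eigenvalue assumption. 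Applying this bound to $u-\mathcal T_{\boldsymbol\Phi_k}\boldsymbol\xi_{S,\epsilon}$ converts the boundary estimate into the bulk $L^{2}(B_{1})$ estimate, with a unified constant $C_{\mathrm{err}}$ independent of $u$ and $\boldsymbol\Phi_{k}$.

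The main obstacle is the first step: guaranteeing the existence of a \emph{finite} $S_0$ for which the discrete sampling norm is uniformly equivalent to the continuous $L^{2}(\partial B_{1})$ norm on the finite-dimensional space spanned by the residuals. Since $\gamma\phi_{k,l}$ need not be a trigonometric polynomial, one must control the Fourier tail on a space depending on $\boldsymbol\Phi_{k}$, which is nonetheless possible because the space is finite-dimensional and $\boldsymbol\Phi_{k}$ is fixed when we pass to the limit $S\to\infty$. The explicit $S_0$ could, in principle, be quantified in terms of the Fourier content of the $\gamma\phi_{k,l}$, but the qualitative existence statement is all that~\eqref{eq:approx-error-estimate1}--\eqref{eq:approx-error-estimate2} require.
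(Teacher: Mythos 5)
Your proposal follows essentially the same route as the paper's proof: the Riemann-sum/Marcinkiewicz--Zygmund equivalence between the discrete boundary samples and the $L^{2}(\partial B_{1})$ norm on the relevant finite-dimensional space, the splitting of the error along retained and discarded singular directions using $\|AA_{S,\epsilon}^{\dagger}\|\le 1$ and $\|A(\mathrm{Id}-A_{S,\epsilon}^{\dagger}A)\|\le\epsilon\sigma_{\max}$, and the passage from the boundary to the bulk via trace continuity and the modal characterization of the norms under the non-eigenvalue assumption. One small correction to your last step: the relevant quantity is not $J_{p}(\kappa)$ itself, which decays super-exponentially in $|p|$ and is certainly not bounded away from zero, but the normalized ratio $\|b_{p}\|_{L^{2}(B_{1})}/\|b_{p}\|_{L^{2}(\partial B_{1})}$, which is uniformly bounded by the asymptotics~\eqref{eq:asymptotics_btildep_norms}; the eigenvalue assumption serves only to exclude $J_{p}(\kappa)=0$ at the finitely many low modes where the asymptotics do not yet apply.
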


Proposition~\ref{prop:approx-error-estimate} shows that having stable
approximation sets in the sense of Definition~\ref{def:stability} is a
sufficient condition for the accurate reconstruction of a Helmholtz solution
from its samples on the boundary of the disk, provided enough sampling points
\(S\) and a sufficiently small regularization parameter \(\epsilon\) are used.
This is summed up in the following result,
see Appendix~\ref{app:sec3-proofs} for its proof.

\begin{corollary}\label{cor:approx-error-estimate}
    Let \(\delta>0\).
    We assume to have a sequence of approximation sets
    \(\{\boldsymbol{\Phi}_{k}\}_{k \in \mathbb{N}}\) that is stable in the sense of
    Definition~\ref{def:stability}.
    Assume also that $\kappa^2$ is not a Dirichlet eigenvalue in $B_1$.
    Then,
    \begin{equation}\label{eq:cor:approx-ee}
        \begin{aligned}
            & \forall u \in \mathcal{B} \cap C^{0}(\overline{B_{1}}),\ 
            \exists \boldsymbol{\Phi}_{k},\ S_{0} > 0,\ \epsilon_{0} \in (0,1],
            \quad\text{such that}\quad\\
            & \forall S \geq S_{0}, \epsilon \in (0,\epsilon_{0}], \qquad
            \|u - \mathcal{T}_{\boldsymbol{\Phi}_{k}}\boldsymbol{\xi}_{S,\epsilon}\|_{L^{2}(B_{1})}
            \leq \delta \| u \|_{\mathcal{B}},
        \end{aligned}
    \end{equation}
    where
    \(\boldsymbol{\xi}_{S,\epsilon} \in \mathbb{C}^{|\boldsymbol{\Phi}_{k}|}\)
    is defined in~\eqref{eq:solution_SVDr}.
    Moreover, we can take the regularization parameter \(\epsilon\) as large as 
    \begin{equation}\label{eq:epsilon-estimate}
        \epsilon_{0} = \frac{\delta \; \sqrt{S}}
        {2 C_{\mathrm{err}} \sigma_{\max} C_{\mathrm{stb}}|\boldsymbol{\Phi}_{k}|^{s}}.
    \end{equation}
\end{corollary}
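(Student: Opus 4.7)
The plan is to chain together the stable-approximation property from Definition~\ref{def:stability} with the error estimate~\eqref{eq:approx-error-estimate2} of Proposition~\ref{prop:approx-error-estimate}. The two bounds have a common structure: stability provides a coefficient vector $\boldsymbol{\mu}$ whose best-approximation error and $\ell^2$-norm are both controlled, while the proposition says the computed $\boldsymbol{\xi}_{S,\epsilon}$ is within a bounded multiple of those two quantities. It remains only to balance free parameters ($\eta$, $\epsilon$) to absorb everything into $\delta\|u\|_{\mathcal{B}}$.

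Concretely, given $u \in \mathcal{B}\cap C^0(\overline{B_1})$ and the target tolerance $\delta>0$, I would first invoke Definition~\ref{def:stability} with tolerance $\eta := \delta/(2C_{\mathrm{err}})$, where $C_{\mathrm{err}}$ is the constant produced by Proposition~\ref{prop:approx-error-estimate} (which applies here thanks to the hypothesis that $\kappa^2$ is not a Dirichlet eigenvalue). This yields an approximation set $\boldsymbol{\Phi}_k$ in the sequence and a coefficient vector $\boldsymbol{\mu}\in\mathbb{C}^{|\boldsymbol{\Phi}_k|}$ satisfying simultaneously
\begin{equation*}
\|u - \mathcal{T}_{\boldsymbol{\Phi}_k}\boldsymbol{\mu}\|_{\mathcal{B}} \le \eta\,\|u\|_{\mathcal{B}}, \qquad
\|\boldsymbol{\mu}\|_{\ell^2} \le C_{\mathrm{stb}}\,|\boldsymbol{\Phi}_k|^{s}\,\|u\|_{\mathcal{B}}.
\end{equation*}
With this particular $\boldsymbol{\mu}$ and $\boldsymbol{\Phi}_k$ now fixed, Proposition~\ref{prop:approx-error-estimate} delivers an $S_0>0$ such that for all $S\ge S_0$ and all $\epsilon\in(0,1]$,
\begin{equation*}
\|u - \mathcal{T}_{\boldsymbol{\Phi}_k}\boldsymbol{\xi}_{S,\epsilon}\|_{L^2(B_1)}
\le C_{\mathrm{err}}\Bigl(\eta\,\|u\|_{\mathcal{B}} + \frac{\epsilon\,\sigma_{\max}}{\sqrt{S}}\,C_{\mathrm{stb}}\,|\boldsymbol{\Phi}_k|^{s}\,\|u\|_{\mathcal{B}}\Bigr).
\end{equation*}

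By the choice of $\eta$, the first term on the right is at most $\tfrac{\delta}{2}\|u\|_{\mathcal{B}}$. To control the second term by the same amount, it suffices to take $\epsilon \le \epsilon_0$ with
\begin{equation*}
\epsilon_0 := \frac{\delta\sqrt{S}}{2\,C_{\mathrm{err}}\,\sigma_{\max}\,C_{\mathrm{stb}}\,|\boldsymbol{\Phi}_k|^{s}},
\end{equation*}
which is exactly the formula~\eqref{eq:epsilon-estimate} claimed. (If this quantity exceeds $1$, one simply caps it at $1$, which only strengthens the conclusion.) Summing the two contributions gives the desired bound $\delta\,\|u\|_{\mathcal{B}}$.

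There is no real obstacle here: both the stability definition and the proposition are crafted precisely to make this composition work. The only point demanding a small amount of care is the quantifier order, i.e.\ that the stable-approximation property must be used \emph{before} invoking the proposition, so that $\boldsymbol{\Phi}_k$ (and hence $|\boldsymbol{\Phi}_k|$ and the associated $S_0$) is determined once and for all from $\delta$ and $u$. After that, the thresholds $S\ge S_0$ and $\epsilon\le\epsilon_0$ are independent and can be taken freely, which is exactly the shape of statement~\eqref{eq:cor:approx-ee}.
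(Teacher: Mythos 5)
Your proposal is correct and follows essentially the same route as the paper's own proof: invoke the stability property to obtain $\boldsymbol{\Phi}_k$ and $\boldsymbol{\mu}$, feed that $\boldsymbol{\mu}$ into estimate~\eqref{eq:approx-error-estimate2} of Proposition~\ref{prop:approx-error-estimate}, and split $\delta$ between the two resulting terms via the choices $\eta=\delta/(2C_{\mathrm{err}})$ and $\epsilon\le\epsilon_0$. The only cosmetic difference is that you fix $\eta$ up front while the paper leaves it free and selects it at the end; the substance is identical.
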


The point of Corollary~\ref{cor:approx-error-estimate} is not only that the
solution of the regularized SVD problem provides an accurate approximation of
$u$, but also that it is numerically computable. 
This is in contrast with the classical theory for the
approximation by PPWs, e.g.\ \cite{Moiola2011}, which
provides rigorous best-approximation error bounds that often can not be
attained numerically, precisely because accurate approximations require large
coefficients and cancellation, so exact-arithmetic results cannot be reflected
by floating-point computations.

The assumption on the eigenvalues in Corollary~\ref{cor:approx-error-estimate}
can be lifted if in \eqref{eq:cor:approx-ee} the $L^2(B_1)$ norm is replaced by
$L^2(\partial B_1)$.
Moreover, in this case, the constant \(C_\mathrm{err}\) at the right-hand side
of~\eqref{eq:epsilon-estimate} can be dropped.
Finally, the largest singular value \(\sigma_{\max}\) of the matrix \(A\)
appears in the above results: in our numerical experiments \(\sigma_{\max}\) is
moderate, see Figure~\ref{fig:k16_svs_scatter}.

In the following, it will be convenient to measure the approximation error
by the relative residual
\begin{equation}\label{eq:relative-error}
    \mathcal{E} = \mathcal{E}(u, \boldsymbol{\Phi}_{k}, S, \epsilon) :=
    \frac{ \|A \boldsymbol{\xi}_{S,\epsilon} - \mathbf{b}\|_{\ell^{2}} }
    { \|\mathbf{b}\|_{\ell^{2}} },
\end{equation}
where \(\boldsymbol{\xi}_{S,\epsilon}\) is the 
solution~\eqref{eq:solution_SVDr} of the regularized linear system.
Arguing as in the proof of Proposition~\ref{prop:approx-error-estimate},
(see Appendix~\ref{app:sec3-proofs})
for sufficiently large $S$, the quantity \(\mathcal{E}\) satisfies 
(for a constant \(\tilde{C}\) independent of \(u\),
\(\boldsymbol{\Phi}_{k}\), \(S\))
\begin{equation}
    \|u - \mathcal{T}_{\boldsymbol{\Phi}_{k}}\boldsymbol{\xi}_{S,\epsilon}\|_{L^{2}(B_{1})}
    \leq \tilde{C} \|u\|_{\mathcal{B}} \; \mathcal{E} .
\end{equation}

\section{Instability of propagative plane wave sets}\label{sec:propagative-plane-waves}

The purpose of this section is to present the pitfalls encountered when using
\emph{propagative plane waves} (PPW) to approximate Helmholtz solutions in the unit
disk.
In particular, we show that PPWs with equispaced angles in general
fail to yield stable approximations.
This implies that problems can not be solved numerically to arbitrary accuracy
or, in some cases, to any accuracy at all. The main effort is to show that PPW
approximations lead to large expansion coefficients and that this problem can
not be avoided using PPWs alone.

\subsection{Propagative plane waves and Jacobi--Anger identity}

We introduce the notion of a \emph{propagative} plane wave.
The adjective \emph{propagative} is not customary in the literature, but serves
to distinguish the following definition with the notion of \emph{evanescent}
plane waves (EPWs) that will be introduced in
Definition~\ref{def:evanescent-plane-wave}.

\begin{definition}[Propagative plane wave]\label{def:propagative-plane-wave}
    For any angle \(\varphi\in[0,2\pi)\), we let
    \begin{equation}\label{eq:direction_propagative-plane-wave}
        \mathrm{PW}_{\varphi}(\mathbf{x})
        := e^{\imath \kappa \mathbf{d}(\varphi)\cdot\mathbf{x}},
        \ \forall \mathbf{x}\in\mathbb{R}^2,
        \quad\text{where}\quad
        \mathbf{d}(\varphi) := \left(\cos \varphi,\, \sin \varphi\right)
        \in \mathbb{R}^{2}.
    \end{equation}
\end{definition}
All PPWs satisfy the homogeneous Helmholtz equation~\eqref{eq:Helmholtz} since
\(\mathbf{d}(\varphi) \cdot \mathbf{d}(\varphi) = 1\)
for any angle \(\varphi\in[0,2\pi)\).

Propagative plane waves are a common choice in Trefftz schemes,
see\ \cite[Sec. 3.2]{Hiptmair2016} and the references therein.
In 2D, isotropic approximations are obtained by using equispaced angles:
for some \(M \in \mathbb{N}\), the approximation set is defined as
\begin{equation}\label{eq:propagative-pw-sets}
    \boldsymbol{\Phi}_{M} := \{M^{-1/2}\,\mathrm{PW}_{\varphi_{M,m}}\}_{m=1}^{M},
    \quad\text{where}\quad
    \varphi_{M,m}:=\frac{2\pi m}{M},
    \qquad 1\leq m \leq M.
\end{equation}
In contrast to circular waves, the approximation sets based on such PPWs are in
general not hierarchical.
Plane waves spaces have been studied in the literature, in particular explicit
\(hp\)-estimates in suitable Sobolev semi-norms are available for general
domains, see\ \cite[Th.~5.2 and 5.3]{Moiola2011}.
These results ensure more than exponential convergence (with respect to the
number of plane waves used) of the approximation of homogeneous Helmholtz
solutions by a finite superposition of PPWs.
Therefore, at least in principle, PPWs are well-suited for
Trefftz approximations.

The \emph{Jacobi--Anger identity}\ \cite[Eq.~(10.12.1)]{DLMF} provides a link
between plane waves and circular waves and is ubiquitous in the analysis that
follows:
\begin{equation}\label{eq:Jacobi--Anger}
    \mathrm{PW}_{\varphi}(r,\theta)
    = e^{\imath \kappa \mathbf{d}(\varphi)\cdot\mathbf{x}}
    = \sum_{p\in\mathbb{Z}}
    \imath^{p} J_{p}(\kappa r)
    e^{\imath p(\theta-\varphi)},
    \qquad\forall\mathbf{x} = (r,\theta) \in B_{1},\ \varphi \in [0,2\pi).
\end{equation}

\subsection{Herglotz representation}\label{sec:Herglotz-representation}

We recall the so-called \emph{Herglotz functions}.
They are defined for any \(v \in L^{2}([0,2\pi])\) as
\begin{equation}\label{eq:Herglotz-ppw}
    u(\mathbf{x}; v) :=
    \int_{0}^{2\pi} v(\varphi) \mathrm{PW}_{\varphi}(\mathbf{x})
    \;\mathrm{d}\varphi,
    \qquad\forall\mathbf{x} \in \mathbb{R}^{2},
\end{equation}
see\ \cite[Eq.~(1.1)]{Colton1985},\ \cite[Eq.~(6)]{Weck2004}
and\ \cite[Def.~3.18]{Colton2013}.
Such an expression is termed \emph{Herglotz representation}.
The function $v$ is called \emph{Herglotz kernel} or \emph{density}.
These functions \(u(\cdot;v) \in C^{\infty}(\mathbb{R}^{2})\)
are entire solutions of the Helmholtz equation
and can be seen as a continuous superposition of PPWs,
weighted according to \(v\).
To see that \(u(\cdot, v)\in\mathcal{B}\), let \(v \in L^{2}([0,2\pi]) \),
which we write as a Fourier expansion
\begin{equation}\label{eq:vHerglotz}
    v(\varphi) = \frac{1}{2\pi}
    \sum_{p\in\mathbb{Z}} \hat v_{p} e^{\imath p \varphi},
    \qquad\forall\varphi\in [0,2\pi],
\end{equation}
for a sequence of coefficients
\((\hat v_{p})_{p\in\mathbb{Z}}\in\ell^2(\mathbb Z)\).
Plugging this expression into~\eqref{eq:Herglotz-ppw} and using the
Jacobi--Anger expansion~\eqref{eq:Jacobi--Anger} together with the
orthogonality of the complex exponentials
\(\{\theta\mapsto e^{\imath p \theta}\}_{p\in\mathbb{Z}}\),
we obtain, for any \(\mathbf{x} = (r,\theta) \in \mathbb{R}^{2}\),
\begin{equation}
    u(\mathbf{x}; v) =
    \int_{0}^{2\pi} v(\varphi) \mathrm{PW}_{\varphi}(\mathbf{x})
    \;\mathrm{d}\varphi
    = \sum_{p\in\mathbb{Z}} \imath^{p} \hat v_{p}
    J_{p}(\kappa r) e^{\imath p \theta}
    = \sum_{p\in\mathbb{Z}} \frac{\imath^p\hat v_p}{\beta_p}\,
    b_{p}(\mathbf{x})\in \mathcal{B},
\end{equation}
thanks to the super-exponential growth of the coefficients
\(\{\beta_{p}\}_{p\in\mathbb{Z}}\) shown in
Lemma~\ref{lem:beta_asymptotic}.

While circular waves do have a Herglotz representation, their Herglotz densities
are not bounded uniformly with respect to the index \(p\).
For any \(p\in\mathbb{Z}\) and \(\mathbf{x} = (r,\theta) \in B_{1}\),
using once again
Jacobi--Anger expansion~\eqref{eq:Jacobi--Anger} together with the
orthogonality of the complex exponentials, we have
\begin{equation}
    \int_{0}^{2\pi} e^{\imath p \varphi}
    \mathrm{PW}_{\varphi}(\mathbf{x}) \;\mathrm{d}\varphi
    = 
    \int_{0}^{2\pi} e^{\imath p \varphi}
    \sum_{q\in\mathbb{Z}} \imath^{q}
    J_{q}(\kappa r) e^{\imath p(\theta-\varphi)}
    \;\mathrm{d}\varphi
    = 2\pi \imath^{p}
    J_{q}(\kappa r) e^{\imath p\theta}.
\end{equation}
Hence, we obtain the Herglotz representation of the circular waves,
\begin{equation}\label{eq:Herglotz-representation-of-bp-using-ppw}
    b_{p}(\mathbf{x}) =
    \int_{0}^{2\pi} \left[
        \frac{\beta_{p}}{2\pi \imath^{p}} e^{\imath p \varphi}
    \right]
    \mathrm{PW}_{\varphi}(\mathbf{x}) \;\mathrm{d}\varphi,
\end{equation}
sometimes referred to as Bessel's first integral
identity\ \cite[Eq.~(6)]{PerreyDebain2006}.
The associated Herglotz density,
\(\varphi \mapsto \beta_{p}(2\pi)^{-1} \imath^{-p} e^{\imath p \varphi}\),
is clearly not bounded uniformly with respect to the mode number \(p\),
as a consequence of Lemma~\ref{lem:beta_asymptotic}.
As a result, the discretization of this exact integral representation
(e.g.~by the trapezoidal rule), \emph{cannot}
yield approximate discrete representations with bounded coefficients, as we
establish next.

Moreover, several solutions of the Helmholtz equation can
\emph{not} be represented in the form \eqref{eq:Herglotz-ppw} for any
\(v \in L^{2}([0,2\pi])\).
For any sequence \((\hat u_{p})_{p\in\mathbb{Z}}\in\ell^2(\mathbb Z)\), the
function $u=\sum_{p\in\mathbb Z}\hat u_p b_p$ belongs to $\mathcal B$, because
$\{b_{p}\}_{p\in\mathbb{Z}}$
is a Hilbert basis.
If this \(u\) admits a Herglotz representation in the
form~\eqref{eq:Herglotz-ppw}
then the coefficients \(\{\hat v_{p}\}_{p\in\mathbb{Z}}\) of the Fourier
expansion \eqref{eq:vHerglotz} of the density $v$ satisfy the relation
\(\hat v_{p} = \imath^{-p} \beta_{p} \hat u_{p}\) for all \(p\in\mathbb{Z}\).
For \(v\) to belong to \(L^{2}([0,2\pi])\),
these coefficients would need to belong to \(\ell^{2}(\mathbb{Z})\).
This is only possible if the coefficients \(\{\hat u_{p}\}_{p\in\mathbb{Z}}\)
decay super-exponentially, to compensate for
the growth of \(\{\beta_{p}\}_{p\in\mathbb{Z}}\), again by
Lemma~\eqref{lem:beta_asymptotic}.
For instance, the PPWs themselves are not Herglotz
functions, because their Fourier coefficients do not decay sufficiently fast,
as can be readily seen from the Jacobi--Anger identity~\eqref{eq:Jacobi--Anger}
(in particular, for a PPW
\(|\imath^{-p} \beta_{p} \hat u_{p}|=1\) for all $p$).
In fact, the density \(v\) for a PPW would need to
be a generalized function, the Dirac distribution.

\subsection{Propagative plane waves do not give stable
approximations}\label{sec:instability}

We investigate the approximation of a circular wave \(b_{p}\) for some
\(p\in\mathbb{Z}\) by a generic sequence of approximation sets made of PPWs.
It is shown that the two conditions in~\eqref{eq:stability}, namely accurate
approximation and bounded coefficients, are mutually exclusive.
Thus, stable approximations with PPWs are not possible.

\begin{lemma}\label{lem:instability-propagative}
    Recall the definition of \(b_{p}\) and \(\beta_{p}\) in~\eqref{eq:btilde_p}.
    Let \(p\in\mathbb{Z}\) and some tolerance \(1 \geq \eta > 0\) be given.
    For all \(M\in\mathbb{N}\),
    any approximation set
    \(
        \boldsymbol{\Phi}_{M} := \{M^{-1/2}\,\mathrm{PW}_{\varphi_{m}}\}_{m=1}^{M}
    \)
    made of PPWs with any distribution of angles
    \(
        \{\varphi_{m}\}_{m=1}^{M} \subset [0,2\pi)
    \),
    satisfies
    \begin{equation}\label{eq:lower-bound-coef-instability}
        \forall \boldsymbol{\mu} \in \mathbb{C}^{M},
        \qquad
        \|b_{p} - \mathcal{T}_{\boldsymbol{\Phi}_{M}}\boldsymbol{\mu}\|_{\mathcal{B}}
        \leq \eta \|b_p\|_{\mathcal B}
        \quad\Rightarrow\quad
        \|\boldsymbol{\mu}\|_{\ell^{2}}
        \geq (1-\eta) \beta_{p} \|b_p\|_{\mathcal B}.
    \end{equation}
\end{lemma}
\begin{proof}
    Let \(M \in \mathbb{N}\) and
    \( \boldsymbol{\mu} := \{\mu_{m}\}_{m=1}^{M} \in \mathbb{C}^{M}\).
    Using the Jacobi--Anger identity~\eqref{eq:Jacobi--Anger}
    we obtain at \(\mathbf{x} = (r,\theta) \in B_{1}\)
    \begin{equation}
        \sqrt{M}
        (\mathcal{T}_{\boldsymbol{\Phi}_{M}}\boldsymbol{\mu})(r,\theta)
        = \sum_{1 \leq m \leq M}
        \mu_{m} \sum_{q\in\mathbb{Z}}
        \imath^{q} J_{q}(\kappa r)
        e^{\imath q(\theta-\varphi_{m})}
        = \sum_{q\in\mathbb{Z}}
        \Big(
            \imath^{q}\sum_{1 \leq m \leq M}
            \mu_{m} \; e^{-\imath q\varphi_{m}}
        \Big)
        J_{q}(\kappa r)
        e^{\imath q\theta},
    \end{equation}
    so that 
    \(
        \mathcal{T}_{\boldsymbol{\Phi}_{M}}\boldsymbol{\mu}
        = \sum_{q\in\mathbb{Z}} c_{q}
        \tilde{b}_{p}
    \),
    where the coefficients
    \(c_{q} := \imath^{q}/\sqrt{M}
    \sum_{1 \leq m \leq M}\mu_{m} \;e^{-\imath q\varphi_{m}} \)
    satisfy
    \begin{equation}
        \left| c_q \right| =
        M^{-1/2}
        \Big|
            \imath^q 
            \sum_{1 \leq m \leq M}
            \mu_{m} \;
            e^{-\imath q\varphi_{m}}
        \Big|
        \leq
        M^{-1/2}
        \sum_{1 \leq m \leq M} \left| \mu_{m} \right|
        =
        M^{-1/2}
        \|\boldsymbol{\mu}\|_{\ell^{1}}
            \leq
        \|\boldsymbol{\mu}\|_{\ell^{2}}
        ,  \qquad\forall q\in\mathbb{Z}.
    \end{equation}
    To ensure that the approximation error
    \(\|b_{p} - \mathcal{T}_{\boldsymbol{\Phi}_{M}}\boldsymbol{\mu}\|_{\mathcal{B}}=(\sum_{q\in\mathbb{Z}} 
    |\delta_{p q} - c_{q} \beta_{q}^{-1}|^2)^{1/2}\) is
    below the tolerance \(\eta > 0\), we need at least
    $|\delta_{p q} - c_{q} \beta_{q}^{-1}| < \eta$, $\forall q\in\mathbb{Z}$.
    For $q=p$ this reads
    \begin{equation}
        \eta > \left|1 - c_{p} \beta_{p}^{-1}\right| 
        \ge 1 - |c_{p}| \beta_{p}^{-1}
        \ge 1 - \|\boldsymbol{\mu}\|_{\ell^{2}} \; \beta_{p}^{-1},
    \end{equation}
    which can be rewritten as~\eqref{eq:lower-bound-coef-instability},
    recalling that $\|b_p\|_{\mathcal B}=1$.
\end{proof}

This bound means that if one approximates circular waves \(b_{p}\)
in the form of PPW expansions
\(\mathcal{T}_{\boldsymbol{\Phi}_{M}}\boldsymbol{\mu}\)
with a given accuracy (i.e.~small \(\eta>0\)), then the
norms of the coefficients \(\|\boldsymbol{\mu}\|_{\ell^{2}}\)
need to increase at least like the normalization
constant \(\beta_{p}\), i.e.~super-exponentially fast in \(|p|\),
see Lemma~\ref{lem:beta_asymptotic}.
This is a clear example of accuracy and stability properties being
opposite to each other.
We state this important conclusion as a theorem to stress the
message.

\begin{theorem}\label{th:instability}
    There does \emph{not} exist a sequence of approximation sets made of PPWs
    that is a stable approximation for the space of Helmholtz solutions on the
    disk.
\end{theorem}
\begin{proof}
    Lemma~\ref{lem:instability-propagative} exhibits a particular sequence,
    the sequence of circular waves \(\{b_{p}\}_{p \in \mathbb{Z}}\),
    for which any generic sequence of PPW approximation sets
    \(\{\boldsymbol{\Phi}_{M}\}_{M \in \mathbb{N}}\)
    does not provide stable approximations.
    Indeed, let \(p\in\mathbb{Z}\) and suppose there exist \(M \in \mathbb{N}\)
    and \(\boldsymbol{\mu} \in \mathbb{C}^{M}\) such that
    \( \|b_{p} - \mathcal{T}_{\boldsymbol{\Phi}_{M}}\boldsymbol{\mu}\|_{\mathcal{B}}
    \leq \eta \|b_p\|_{\mathcal B}\)
    for some \(\eta\in(0,1)\).
    Then
    \(
        \|\boldsymbol{\mu}\|_{\ell^{2}}
        \geq (1-\eta) \beta_{p} \|b_p\|_{\mathcal B}
    \),
    which implies that \(\|\boldsymbol{\mu}\|_{\ell^{2}}\) cannot be bounded
    uniformly with respect to \(p\) in virtue of
    Lemma~\ref{lem:beta_asymptotic}.
    The stability condition~\eqref{eq:stability} is not satisfied and we
    conclude that any sequence of PPW approximation sets
    \(\{\boldsymbol{\Phi}_{M}\}_{M \in \mathbb{N}}\) is unstable in the sense
    of Definition~\ref{def:stability}.
\end{proof}

More generally, this statement has implications for other Trefftz methods as
well.
It is not sufficient to study the best approximation error in a space spanned
by Trefftz elements.
If one is interested in numerical methods, one has to study approximation
properties in relation to coefficient norm, and the latter depends not only on
the approximation space but also on its chosen representation,
i.e.\ the approximation set.

In the context of the Method of Fundamental Solutions (MFS), similar
instability results (exponential growth of the coefficient size) are obtained
if the analytic extension of the Helmholtz solution presents a singularity 
closer to the boundary than the MFS charge
points\ \cite[Th.~7]{Barnett2008}.

\paragraph{Modal analysis of a propagative plane wave.}

Another point of view on the same issue is directly given by the Jacobi--Anger
identity \eqref{eq:Jacobi--Anger}.
This identity allows us to get quantitative insight into the modal content
of PPWs.
For any \(\mathbf{x} = (r, \theta) \in B_{1}\) and \(\varphi \in [0,2\pi)\),
we have
\begin{equation}
    e^{\imath \kappa \mathbf{d}(\varphi)\cdot\mathbf{x}}
    = \sum_{p\in\mathbb{Z}}
    \left( \imath^{p} e^{-\imath p\varphi} J_{p}(\kappa r) \right)
    e^{\imath p\theta}
    = \sum_{p\in\mathbb{Z}} 
    \left( \imath^{p} e^{-\imath p\varphi} \beta_{p}^{-1} \right)
    b_{p}(r,\theta).
\end{equation}
The modulus of the coefficients
\(\imath^{p} e^{-\imath p\varphi} \beta_{p}^{-1}\)
in the expansion as a function of \(p\) can be directly deduced from
Lemma~\ref{lem:beta_asymptotic} (for large $|p|$) and is reported in
Figure~\ref{fig:Jacobi--Anger_k16} (left).
This quantity does not depend on the propagation angle \(\varphi\) which
parametrizes the PPW.

These coefficients decay super-exponentially fast in modulus in the evanescent
regime \(|p| \geq \kappa\).
Recalling Remark~\ref{rmk:normalization-btildep}, 
the coefficients with respect to a normalization in alternative sensible norms
(\(L^{2}(B_{1})\), \(L^{2}(\partial B_{1})\) or
\(L^{\infty}(\partial B_{1})\) for instance) modify the decay only by some
moderate powers of \(|p|\).
This does not come as a surprise, since PPWs are entire functions.
Yet, the modal content of any PPW is fixed and low-frequency.
The direct implication is that they are not suited for approximating Helmholtz
solutions with a high-frequency modal content (large \(|p|\)).

\section{Evanescent plane waves}\label{sec:evanescent-plane-waves}

The goal of this section is to introduce \emph{evanescent plane waves} (EPWs) with a complex-valued
direction vector \(\mathbf{d}\in\mathbb{C}^{2}\), as opposed to
propagative ones with \(\mathbf{d}\in\mathbb{R}^{2}\),
and to provide intuitive reasons for their better stability
properties.
PPWs and EPWs are sometimes respectively called
homogeneous and inhomogeneous plane waves, since only the former
have constant amplitude.
Combinations of PPWs and EPWs have already been used to approximate Helmholtz
solutions, e.g.\ in the Wave Base Method\ \cite{Deckers2014}, and Laplace
eigenfunctions, e.g.\ in \cite[\S6.1.3]{Barnett2000}.

\subsection{Definition}

\begin{definition}[Evanescent plane wave]\label{def:evanescent-plane-wave}
    For any parameter
    \(\mathbf{y} := (\varphi, \zeta) \in [0,2\pi) \times \mathbb{R}\),
    we let
    \begin{equation}
        \mathrm{EW}_{\mathbf{y}} (\mathbf{x}) =
        \mathrm{EW}_{\varphi, \zeta} (\mathbf{x}) :=
        e^{\imath \kappa \mathbf{d}({\mathbf{y}})\cdot\mathbf{x}},
        \ \forall \mathbf{x}\in\mathbb{R}^2,
        \quad\text{where}\quad
        \mathbf{d}({\mathbf{y}}) := \big(\cos(\varphi + \imath \zeta),\,
            \sin (\varphi + \imath \zeta)\big)\in\mathbb{C}^{2}.
    \end{equation}
\end{definition}
EPWs are solutions of the
homogeneous Helmholtz equation~\eqref{eq:Helmholtz} since
\(\mathbf{d}({\mathbf{y}}) \cdot \mathbf{d}({\mathbf{y}}) = 1\)
for any \({\mathbf{y}} \in [0,2\pi) \times \mathbb{R}\).
A number of EPWs are illustrated in
Figure~\ref{fig:gpw_k16_real_varphipiover8}.
EPWs can be seen as standard plane waves after the
`complexification' of the angle \(\varphi \in \mathbb{R}\)
into \(\varphi + \imath \zeta \in \mathbb{C}\).
For \(\mathbf{y}=\left(\varphi,0\right)\) (i.e.~setting \(\zeta=0\)), we
recover the usual PPW of
Definition~\ref{def:propagative-plane-wave}, whose direction is defined solely
by the angle \(\varphi\): \(\mathrm{EW}_{\varphi,0} = \mathrm{PW}_\varphi\).

Since the angle is complex, the behavior of the ``wave'' might be unclear.
Two more explicit expressions of EPWs are,
for \(\mathbf{x} = (r,\theta)\in \mathbb{R}^2\):
\begin{equation}
    \begin{aligned}
        & \mathrm{EW}_{\varphi,\zeta}(\mathbf{x})
        = e^{\imath\kappa(\cosh\zeta)\mathbf{x}\cdot\mathbf{d}(\varphi)}\;
        e^{-\kappa(\sinh\zeta)\mathbf{x}\cdot\mathbf{d}^\perp(\varphi)},
        \qquad \text{where} \qquad
        \mathbf{d}^\perp(\varphi):=\left(-\sin \varphi,\, \cos \varphi\right),\\
        \text{and}\quad
        & \mathrm{EW}_{\varphi,\zeta}(\mathbf{x})
        = e^{\imath \kappa r (\cosh\zeta) \cos\left(\varphi-\theta\right)}\;
        e^{\kappa r (\sinh\zeta)  \sin\left(\varphi-\theta\right) }.
    \end{aligned}
\end{equation}
We see from these formulas that the wave \emph{oscillates} with apparent
wavenumber \(\kappa \cosh\zeta \ge\kappa\) in the direction of
\(\mathbf{d}(\varphi) := \left(\cos \varphi,\, \sin \varphi\right)\),
which was defined in~\eqref{eq:direction_propagative-plane-wave}
and is parallel to \(\Re[ \mathbf{d}(\mathbf{y}) ]\).
In addition, the wave \emph{decays} exponentially with rate
\(\kappa\sinh\zeta\) in the orthogonal direction
\(\mathbf{d}(\varphi)^{\perp}\),
which is parallel to \(\Im[ \mathbf{d}(\mathbf{y}) ]\).
This justifies naming the new parameter \(\zeta \in \mathbb{R}\),
which controls the imaginary part of the angle, the \emph{evanescence}
parameter.

\begin{figure}[htb]
    \centering
    \includegraphics[width=0.2\textwidth]{./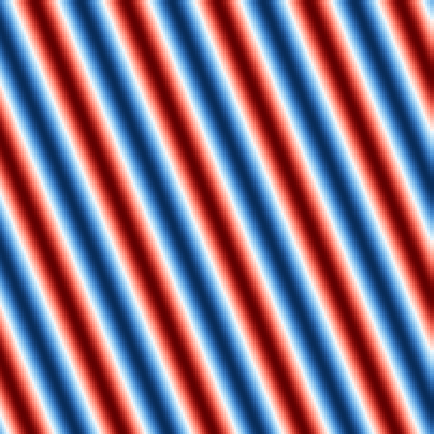}
    \hspace{0.1\textwidth}
    \includegraphics[width=0.2\textwidth]{./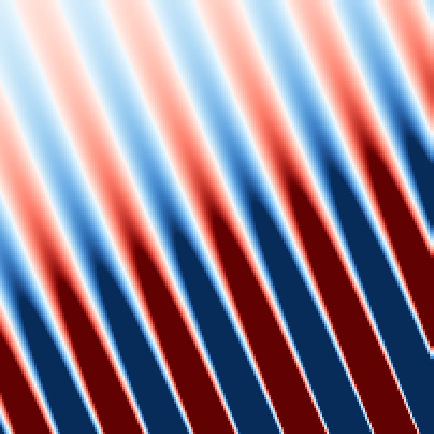}
    \hspace{0.1\textwidth}
    \includegraphics[width=0.2\textwidth]{./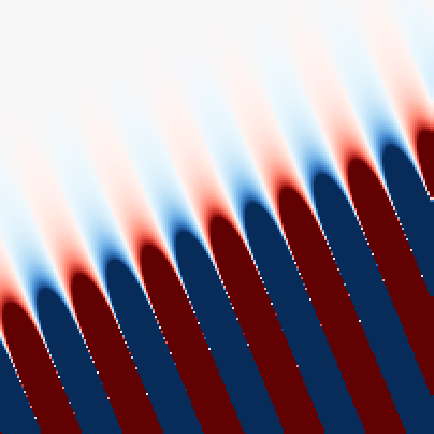}
    \caption{Real part of \(\mathrm{EW}_{\varphi,\zeta}\)
    with \(\varphi=\pi/8\), \(\zeta \in \{0,1/10,1/2\}\) (left to right) and
    \(\kappa=16\).}\label{fig:gpw_k16_real_varphipiover8}
\end{figure}

\subsection{Modal analysis of evanescent plane waves}

The Jacobi--Anger expansion~\eqref{eq:Jacobi--Anger} extends to complex
\(\mathbf{d}\), i.e.\ to EPWs, see\ \cite[Eq.~(10.12.1),
(10.11.1)]{DLMF}: for any \(\mathbf{x} = (r, \theta) \in B_{1}\)
and \(\mathbf{y} = (\varphi, \zeta) \in [0,2\pi) \times \mathbb{R}\),
\begin{equation}\label{eq:Jacobi--Anger-epw}
    \mathrm{EW}_{\mathbf{y}} (\mathbf{x}) =
    e^{\imath \kappa \mathbf{d}(\mathbf{y})\cdot\mathbf{x}}
    = \sum_{p\in\mathbb{Z}}
    \imath^{p} J_{p}(\kappa r)
    e^{\imath p(\theta-[\varphi+\imath\zeta])}
    = \sum_{p\in\mathbb{Z}} 
    \left( \imath^{p} e^{-\imath p\varphi} e^{p\zeta} \beta_{p}^{-1} \right)
    b_{p}(r,\theta).
\end{equation}
The modulus of the coefficients
\(\imath^{p} e^{-\imath p\varphi} e^{p\zeta} \beta_{p}^{-1}\)
in the modal expansion are reported in
Figure~\ref{fig:Jacobi--Anger_k16} (right) as functions of \(p\).
On this graph, we have conveniently normalized the coefficients
according to a normalization factor (depending only on \(\zeta\))
which is described in the following sections, see~\eqref{eq:evanescent-pw-sets}.
We see that by tuning the evanescence parameter \(\zeta\) we are able to shift
the modal content of the plane waves to higher-frequency regimes.
As a result, we expect EPWs to be able to capture well
the higher-frequency modes of Helmholtz solutions that are less regular. These may arise, for instance, 
in the presence of close-by singularities.
The difficulty then is to properly choose suitable values for this new
evanescence parameter \(\zeta\) in order to build approximation spaces that are
reasonable in size.
This will be the main objective of the remainder of this paper.

\begin{figure}[htb]
    \centering
    \includegraphics[height=0.2\textheight]{./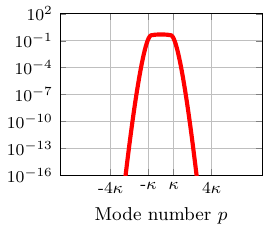}
    \hspace{0.1\textwidth}
    \includegraphics[height=0.2\textheight]{./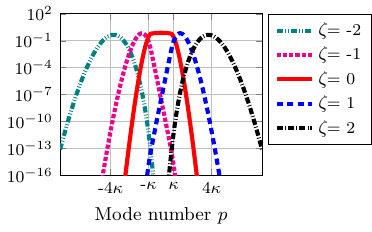}
    \caption{Modal analysis computed using Jacobi--Anger identity~\eqref{eq:Jacobi--Anger-epw}
    of \(\mathrm{PW}_{\varphi}\) (left)
    and \(\mathrm{EW}_{\varphi,\zeta}\) after normalization (right).
    In both cases, the absolute values of the
    coefficients of the expansion of the plane wave in the basis
    \(\{b_p\}_{p\in\mathbb Z}\) is represented against the mode number \(p\).
    Wavenumber \(\kappa=16\).
    Modifying \(\varphi\) has no influence, modifying \(\zeta\) shifts the
    modal content in the Fourier space.
    }\label{fig:Jacobi--Anger_k16}
\end{figure}

\section{Mapping Herglotz densities to Helmholtz solutions}\label{sec:continuous-analysis}

In this section we introduce an integral transform
between a space of functions defined on the parametric domain
\([0,2\pi) \times \mathbb{R}\) and the space of Helmholtz solutions in the unit
disk~\(\mathcal{B}\).

\subsection{Space of Herglotz densities}

To shorten notations we denote the parametric domain as the cylinder
\begin{equation}
    Y := [0,2\pi) \times \mathbb{R}.
\end{equation}
We introduce a weighted \(L^{2}\) space defined on \(Y\).
The weight function is (the square of)
\begin{equation}\label{eq:weight_function}
    w_{z}(\mathbf{y}) = w_{z}(\zeta) := 
    e^{-\kappa\sinh|\zeta|+z|\zeta|},
    \qquad\forall \mathbf{y}=(\varphi,\zeta)\in Y,
\end{equation}
for some \(z\in\mathbb{R}\).
In this section, the parameter \(z\) is temporarily not specified, although
the following analysis shows that it cannot be chosen freely
and should take the specific value \(z=1/4\),
see~\eqref{eq:def-z-to-be-a-quarter}.
We stress that \(w_{z}\) does not depend on the angle \(\varphi\).
The weighted scalar product and associated norm are then defined by:
\begin{equation}
    \left(u,\, v\right)_{\mathcal{A}} :=
    \int_{Y} u(\mathbf{y})\overline{v(\mathbf{y})}
    \; w_{z}^{2}(\mathbf{y})\mathrm{d}\mathbf{y},
    \qquad\qquad
    \|u\|_{\mathcal{A}}^{2} :=
    \left(u,\, u\right)_{\mathcal{A}}.
\end{equation}

We now introduce a subspace of \(L^{2}(Y;w_{z}^{2})\) which we call space of
\emph{Herglotz densities} for reasons that will be clear in the following.
\begin{definition}[Herglotz density]\label{def:Herglotz-space}
    We define, for any \(p\in\mathbb{Z}\),
    \begin{equation}\label{eq:atildep}
        \begin{cases}
            \tilde{a}_{p}(\mathbf{y}) := 
            e^{p\zeta} e^{\imath p \varphi},
            \quad\forall \mathbf{y}=\left(\varphi,\zeta\right)\in Y,\\
            a_{p} := \alpha_{p}\tilde{a}_{p},
            \quad\text{where}\quad
            \alpha_{p} := \|\tilde{a}_{p}\|_{\mathcal{A}}^{-1},
        \end{cases}
        \quad\text{and}\qquad
        \mathcal{A} := \overline{
            \operatorname{span}
            \left\{{a}_{p}\right\}_{p\in\mathbb{Z}}
        }^{\|\cdot\|_{\mathcal{A}}}
        \subsetneq L^{2}(Y;w_{z}^{2}).
    \end{equation}
\end{definition}

The wavenumber \(\kappa\) appears explicitly in the weight function
\(w_{z}\).
Therefore, each \(a_{p}\) for \(p\in\mathbb{Z}\) has an implicit dependence in
the wavenumber \(\kappa\) through the normalization factor \(\alpha_{p}\).
Some functions \(a_{p}\), weighted by \(w_{1/4}\)
(see~\eqref{eq:def-z-to-be-a-quarter}), are represented in
Figure~\ref{fig:wap_k16}.

\begin{figure}
    \centering
    \includegraphics{./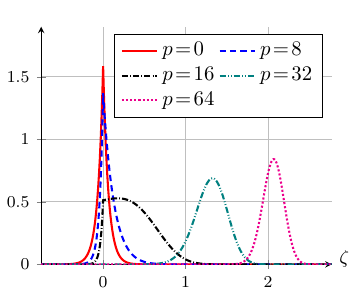}
    \caption{Representation of
    \(\zeta \mapsto |w_{1/4}(\zeta) a_{p}(\zeta,\cdot)|\),
    which is independent of the second argument of the function \(a_{p}\),
    for mode number \(p \in \{0,\kappa/2,\kappa,2\kappa,4\kappa\}\)
    and wavenumber \(\kappa=16\).}\label{fig:wap_k16}
\end{figure}

For any \(p\in\mathbb{Z}\),
the complex-valued function \((\zeta+\imath\varphi) \mapsto
a_{p}(\varphi,\zeta)\) is a holomorphic function of the complex variable
\(\zeta+\imath\varphi \in \mathbb{C}\) for any \((\varphi,\zeta) \in Y\).
It follows that its real and imaginary parts are harmonic functions on the
cylinder \(Y\).

\begin{lemma}\label{lem:a_Hilbert_basis}
    The space
    \(\left(\mathcal{A},\, \|\cdot\|_{\mathcal{A}}\right)\)
    is a Hilbert space and
    the family \(\{a_{p}\}_{p\in\mathbb{Z}}\) is a Hilbert basis:
    \begin{equation}
        \left(a_{p},\, a_{q}\right)_{\mathcal{A}}
        =\delta_{pq},
        \qquad\forall p,q\in \mathbb{Z},
        \qquad\text{and}\qquad
        v = \sum_{p\in\mathbb{Z}}
        \left(v,\, a_{p}\right)_{\mathcal{A}} a_{p},
        \qquad\forall v\in \mathcal{A}.
    \end{equation}
\end{lemma}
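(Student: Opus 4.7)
The statement combines two facts: that $\mathcal{A}$ is a Hilbert space, and that the claimed family is an orthonormal basis. The first is essentially free from the definition, so the work is in checking orthonormality and totality of $\{a_p\}_{p \in \mathbb{Z}}$.

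First, I would note that $\mathcal{A}$ is by definition~\eqref{def:Herglotz-space} a closed subspace of the Hilbert space $(L^2(Y; w_z^2), (\cdot,\cdot)_{\mathcal{A}})$, and hence is itself Hilbert when equipped with the restriction of $(\cdot,\cdot)_{\mathcal{A}}$. Nothing further is needed for this part.

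Next, I would establish orthonormality by direct computation. For $p, q \in \mathbb{Z}$, the separable structure of $\tilde{a}_p$ and the fact that $w_z$ depends only on $\zeta$ allow Fubini to split the scalar product as
\begin{equation*}
(\tilde a_p, \tilde a_q)_{\mathcal A}
= \left( \int_0^{2\pi} e^{\imath(p-q)\varphi}\,\mathrm d\varphi \right)
\left( \int_{\mathbb R} e^{(p+q)\zeta}\, e^{-2\kappa\sinh|\zeta|+2z|\zeta|}\,\mathrm d\zeta \right).
\end{equation*}
The angular factor gives $2\pi\,\delta_{pq}$ by orthogonality of the complex exponentials on $[0,2\pi)$, which immediately provides the off-diagonal vanishing. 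For $p=q$ one is left with a one-dimensional integral; the point to check is that it converges, i.e.\ that $\tilde a_p \in L^2(Y; w_z^2)$ for every $p$. The integrand is $e^{2p\zeta - 2\kappa\sinh|\zeta|+2z|\zeta|}$, and since $\sinh|\zeta|$ grows exponentially at $\pm\infty$, the weight $e^{-2\kappa\sinh|\zeta|}$ beats any linear-exponential factor $e^{2p\zeta+2z|\zeta|}$; convergence follows. This makes $\alpha_p = \|\tilde a_p\|_{\mathcal A}^{-1}$ well defined and strictly positive, hence $(a_p,a_q)_{\mathcal A} = \delta_{pq}$.

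Finally, totality of $\{a_p\}_{p \in \mathbb{Z}}$ in $\mathcal A$ is immediate from the definition: $\mathcal A$ is the closure of $\operatorname{span}\{a_p\}_{p \in \mathbb{Z}}$, so any $v \in \mathcal A$ is a limit of finite linear combinations of the $a_p$. Combined with orthonormality, the standard Hilbert-space argument then gives that the partial sums $\sum_{|p|\le N}(v,a_p)_{\mathcal A}\,a_p$ converge to $v$ in $\mathcal A$ and hence the expansion formula holds. There is no real obstacle here; the only mildly delicate step is verifying the $L^2$-integrability with respect to $w_z^2$, which hinges on the double-exponential decay provided by $\sinh|\zeta|$ and which is what makes the definition of the weight natural in the first place.
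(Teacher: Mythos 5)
Your proof is correct and matches the paper's (implicit) approach: the paper states this lemma without proof, evidently regarding it as the direct analogue of Lemma~\ref{lem:b_Hilbert_basis}, whose proof likewise rests on the orthogonality of the complex exponentials in the angular variable, and the integrability computation you check is exactly the one carried out in the proof of Lemma~\ref{lem:alpha_asymptotic}. No gaps.
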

The coefficients \(\alpha_{p}\) defined in~\eqref{eq:atildep} decay
super-exponentially with \(|p|\) after a pre-asymptotic regime up to
\(|p|\approx \kappa\).
The precise asymptotic behavior is given by the following lemma.
\begin{lemma}\label{lem:alpha_asymptotic}
    For a constant \(c(\kappa)\) only depending on \(\kappa\), we have
    \begin{equation}\label{eq:alpha_asymptotic}
        \alpha_{p} \sim c(\kappa)\;
        \left(\frac{e\kappa}{2}\right)^{|p|}|p|^{1/4-z-|p|}
        \qquad\text{as}\ |p|\to +\infty.
    \end{equation}
\end{lemma}
\begin{proof}
    It is clear that \(\alpha_{-p}=\alpha_{p}\) for all \(p\in\mathbb{Z}\).
    Let \(p \in \mathbb{N}\), we have
    \begin{equation}\label{eq:bounds-L2-norm}
        \begin{alignedat}{3}
            2\pi\int_{-\infty}^{+\infty}
            e^{2p\zeta+2z|\zeta|}
            e^{-2\kappa\sinh\left|\zeta\right|}
            \;\mathrm{d}\zeta
            &= \|\tilde{a}_{p}\|_{\mathcal{A}}^2 &&\leq
            2\pi\int_{-\infty}^{+\infty}
            e^{2p|\zeta|+2z|\zeta|}
            e^{-2\kappa\sinh\left|\zeta\right|}
            \;\mathrm{d}\zeta,\\
            2\pi\int_{0}^{+\infty}
            e^{2(p+z)\zeta}
            e^{-2\kappa\sinh\zeta}
            \;\mathrm{d}\zeta
            &\leq \|\tilde{a}_{p}\|_{\mathcal{A}}^2 &&\leq
            4\pi\int_{0}^{+\infty}
            e^{2(p+z)\zeta}
            e^{-2\kappa\sinh\zeta}
            \;\mathrm{d}\zeta,\\
            2\pi\kappa^{-m}\int_{\kappa}^{+\infty}
            \eta^{m-1}
            e^{-\eta+\frac{\kappa^2}{\eta}}
            \;\mathrm{d}\eta
            &\leq \|\tilde{a}_{p}\|_{\mathcal{A}}^2 &&\leq
            4\pi\kappa^{-m}\int_{\kappa}^{+\infty}
            \eta^{m-1}
            e^{-\eta+\frac{\kappa^2}{\eta}}
            \;\mathrm{d}\eta,\\
            2\pi\kappa^{-m}\int_{\kappa}^{+\infty}
            \eta^{m-1}
            e^{-\eta}
            \;\mathrm{d}\eta
            &\leq \|\tilde{a}_{p}\|_{\mathcal{A}}^2 &&\leq
            4\pi e^{\kappa}\kappa^{-m} 
            \int_{\kappa}^{+\infty}
            \eta^{m-1}
            e^{-\eta}
            \;\mathrm{d}\eta,\\
            2\pi\kappa^{-m}\Gamma(m,\kappa)
            &\leq \|\tilde{a}_{p}\|_{\mathcal{A}}^2 &&\leq
            4\pi e^{\kappa}\kappa^{-m}\Gamma(m,\kappa),
        \end{alignedat}
    \end{equation}
    where we used the change of variable \(\eta = {\kappa}e^{\zeta}\),
    introduced \(m=2(p+z)\) and used the upper incomplete Gamma function
    defined in\ \cite[Eq.~(8.2.2)]{DLMF}.
    The Gamma function \(\Gamma(m)\) and the upper incomplete counterpart
    \(\Gamma(m,\kappa)\) have the same asymptotic behavior for a fixed
    \(\kappa\) when \(m\) goes to infinity,
    see\ \cite[Eq.~(8.11.5)]{DLMF}
    which gives the asymptotic behavior of \(1-\Gamma(m,\kappa)/\Gamma(m)\).
    Using\ \cite[Eq.~(5.11.3)]{DLMF} we get
    $\Gamma(m,\kappa) \sim \Gamma(m) \sim \sqrt{2\pi} e^{-m} m^{m-1/2}$, as
    $m\to\infty$.
    We obtain
    \begin{equation}
        \kappa^{-2(p+z)}\Gamma\big(2(p+z),\kappa\big)
        \sim
        \sqrt{\pi}
        \left(\frac{2}{e\kappa}\right)^{2(p+z)}\; p^{2(p+z)-1/2}
        \left(1+\frac{z}{p}\right)^{2(p+z)-1/2}
        \quad\text{as }p\to+\infty,
    \end{equation}
    and the last term is in fact equivalent to \(e^{2z}\) at infinity;
    the claimed result follows.
\end{proof}

Using our definitions, the Jacobi--Anger expansion \eqref{eq:Jacobi--Anger-epw}
takes the simple form
\begin{equation}\label{eq:Jacobi--Anger-apbp}
   {\mathrm{EW}_{\mathbf{y}}(\mathbf{x})
    = \sum_{p\in\mathbb{Z}} \imath^{p} \;
    \overline{\tilde{a}_{p}(\mathbf{y})} \; \tilde{b}_{p}(\mathbf{x})
    = \sum_{p\in\mathbb{Z}} \tau_{p} \;
    \overline{a_{p}(\mathbf{y})} \; b_{p}(\mathbf{x}),
    \qquad\forall (\mathbf{x},\,\mathbf{y})\in B_{1} \times Y,}
\end{equation}
where we introduced
\begin{equation}\label{eq:tau_p}
    \tau_{p} := 
    \imath^{p} \left(\alpha_{p}\beta_{p}\right)^{-1},
    \qquad\forall p\in\mathbb{Z}.
\end{equation}
Formula~\eqref{eq:Jacobi--Anger-apbp} relates the basis
$\{a_p\}_{p\in\mathbb Z}$ of the space $\mathcal A$ to EPWs
$\mathrm{EW}_{\mathbf y}$ and circular waves $b_p$ on $B_1$ and is the key reason for
introducing the space $\mathcal A$.
The behavior of \(|\tau_{p}|\) is of crucial importance in the following
analysis and is given in Figure~\ref{fig:tau_all} for various wavenumber
\(\kappa\).
From the asymptotics given in Lemma~\ref{lem:beta_asymptotic} and
Lemma~\ref{lem:alpha_asymptotic} we deduce the following result.
\begin{lemma}\label{lem:tau_bounds}
    We have
    \begin{equation}
        |\tau_{p}| \sim c(\kappa)\;|p|^{z-1/4}
        \qquad\text{as}\ |p|\to +\infty,
    \end{equation}
    where the constant \(c(\kappa)\) only depends on \(\kappa\).
    Hence, choosing \(z=1/4\), we get
    \begin{equation}\label{eq:def_taupm}
        \tau_{-}:=\inf_{p\in\mathbb{Z}}|\tau_{p}| > 0,
        \qquad\text{and}\qquad
        \tau_{+}:=\sup_{p\in\mathbb{Z}}|\tau_{p}| < \infty.
    \end{equation}
\end{lemma}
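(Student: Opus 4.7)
The proof is essentially a direct computation combining the two preceding asymptotic lemmas, so my plan is to multiply the estimates term by term and track how the factors cancel.

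First I would write $|\tau_p| = (\alpha_p \beta_p)^{-1}$, since the modulus of $\imath^p$ is $1$, and then substitute the asymptotics. From Lemma~\ref{lem:beta_asymptotic} we have $\beta_p \sim \kappa (2/e\kappa)^{|p|} |p|^{|p|}$, and from Lemma~\ref{lem:alpha_asymptotic} we have $\alpha_p \sim c(\kappa) (e\kappa/2)^{|p|} |p|^{1/4 - z - |p|}$ as $|p| \to \infty$. Multiplying these two equivalents, the exponential factors $(2/e\kappa)^{|p|}$ and $(e\kappa/2)^{|p|}$ cancel exactly and the super-exponential factors $|p|^{|p|}$ and $|p|^{-|p|}$ cancel as well, leaving
\begin{equation*}
    \alpha_p \beta_p \sim \tilde c(\kappa)\, |p|^{1/4 - z}, \qquad \text{as } |p| \to \infty,
\end{equation*}
for some strictly positive constant $\tilde c(\kappa)$ depending only on $\kappa$. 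Taking reciprocals yields the claimed equivalent $|\tau_p| \sim c(\kappa)\, |p|^{z - 1/4}$ with $c(\kappa) = \tilde c(\kappa)^{-1}$.

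For the second part, setting $z = 1/4$ makes $|\tau_p|$ converge to the finite positive constant $c(\kappa)$ as $|p| \to \infty$. Consequently, the bi-infinite sequence $\{|\tau_p|\}_{p\in\mathbb Z}$ is both bounded above and bounded away from zero on the tails, so the extremal values can only be attained (or approached) on a finite set of indices. For those finitely many $p$, both $\alpha_p$ and $\beta_p$ are strictly positive and finite by construction: $\beta_p = \|\tilde b_p\|_{\mathcal B}^{-1}$ with $\tilde b_p$ a nonzero smooth $H^1(B_1)$ function, and $\alpha_p = \|\tilde a_p\|_{\mathcal A}^{-1}$ with the norm being finite and nonzero thanks to the integrability established in~\eqref{eq:bounds-L2-norm}. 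Combining the tail bounds with the finitely many positive values in the middle gives $0 < \tau_- \le \tau_+ < \infty$.

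There is no real obstacle here; the only thing to be slightly careful about is checking that the $|p|^{2z-1/2}$ correction inside the square-bracket expansion $(1 + z/p)^{2(p+z)-1/2} \to e^{2z}$ does not contribute to the algebraic order (it only modifies the constant $c(\kappa)$), and that the constants $\alpha_p, \beta_p$ for small $|p|$ are well-defined and nonzero so that the $\inf$ and $\sup$ do not degenerate.
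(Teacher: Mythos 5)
Your proposal is correct and follows exactly the route the paper intends: the lemma is presented as a direct consequence of Lemmas~\ref{lem:beta_asymptotic} and~\ref{lem:alpha_asymptotic}, obtained by multiplying the two asymptotic equivalents so that the exponential and super-exponential factors cancel, and then observing that for $z=1/4$ the limit of $|\tau_p|$ is a finite positive constant, which together with the positivity and finiteness of the finitely many remaining $\alpha_p\beta_p$ gives $0<\tau_-\le\tau_+<\infty$. Your extra care about the non-degeneracy at small $|p|$ is a sensible (if routine) addition to what the paper leaves implicit.
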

It is clear that the uniform bounds for \(|\tau_{p}|\) are possible only for a
precise pair of norms for the space of Helmholtz solutions and the space of
Herglotz densities.
The bounds \(\tau_{\pm}\) depend implicitly on the wavenumber \(\kappa\),
see Figure~\ref{fig:tau_all}.

\begin{figure}
    \centering
    \includegraphics[width=0.42\textwidth]{./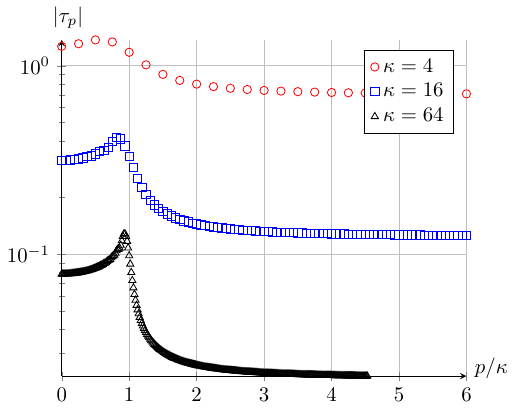}
    \includegraphics[width=0.42\textwidth]{./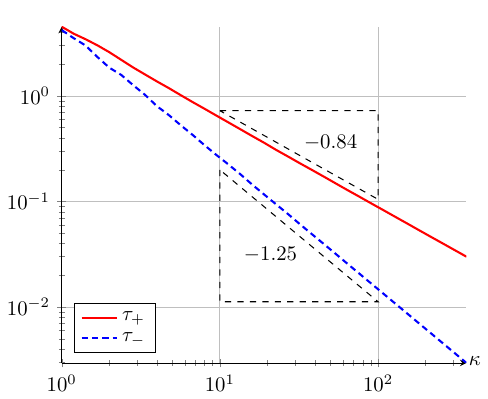}
    \caption{Left: dependence of \(|\tau_{p}|\) defined in~\eqref{eq:tau_p} on
    the mode number \(p\) for various wavenumber \(\kappa\) and \(z=1/4\).
    Right: dependence of \(\tau_{\pm}\) defined in~\eqref{eq:def_taupm}
    on the wavenumber \(\kappa\).}\label{fig:tau_all}
\end{figure}

The uniform boundedness of \(\tau_{p}\) is the key to the following analysis.
In the remainder of the paper, we set \(z = 1/4\) in~\eqref{eq:weight_function}
and we let 
\begin{equation}\label{eq:def-z-to-be-a-quarter}
    w := w_{1/4}.
\end{equation}

We conclude this subsection with a lemma that will be useful in the
following.
\begin{lemma}\label{lem:epw-in-A}
    For any \(\mathbf{x} \in B_{1}\),
    \(\mathbf{y} \mapsto \overline{\mathrm{EW}_{\mathbf{y}}(\mathbf{x})}
    \in \mathcal{A}\).
\end{lemma}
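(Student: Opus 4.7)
The plan is to invoke the Jacobi--Anger expansion~\eqref{eq:Jacobi--Anger-apbp} to expand $\overline{\phi_{\mathbf y}(\mathbf x)}$ in the Hilbert basis $\{a_p\}_{p\in\mathbb Z}$ of $\mathcal A$, and then verify that the resulting sequence of coefficients is square-summable. Since $\{a_p\}$ is the Hilbert basis from Lemma~\ref{lem:a_Hilbert_basis}, this is equivalent to membership in $\mathcal A$.

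Concretely, I would first take the complex conjugate of~\eqref{eq:Jacobi--Anger-apbp} (with $\mathbf x\in B_1$ fixed and $\mathbf y\in Y$ the variable) to obtain formally
\begin{equation*}
    \overline{\phi_{\mathbf y}(\mathbf x)}
    =\sum_{p\in\mathbb Z}\overline{\tau_p\,b_p(\mathbf x)}\,a_p(\mathbf y).
\end{equation*}
By Lemma~\ref{lem:a_Hilbert_basis}, showing $\mathbf y\mapsto\overline{\phi_{\mathbf y}(\mathbf x)}\in\mathcal A$ amounts to proving that the sequence $\{\tau_p\,b_p(\mathbf x)\}_{p\in\mathbb Z}$ belongs to $\ell^2(\mathbb Z)$ and that the partial sums converge to $\overline{\phi_{\mathbf y}(\mathbf x)}$ in the $\|\cdot\|_{\mathcal A}$ norm.

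The key step is the summability estimate. By Lemma~\ref{lem:tau_bounds} (with the choice $z=1/4$), $|\tau_p|\le\tau_+<\infty$ uniformly in $p$, so it suffices to check $\sum_{p\in\mathbb Z}|b_p(\mathbf x)|^2<\infty$. Writing $\mathbf x=(r,\theta)$ with $r<1$, I would combine the asymptotic of $\beta_p$ from Lemma~\ref{lem:beta_asymptotic},
\begin{equation*}
    \beta_p\sim\kappa\left(\frac{2}{e\kappa}\right)^{|p|}|p|^{|p|},
\end{equation*}
with the Bessel asymptotic~\eqref{eq:bessel-asymptotic} applied to $J_p(\kappa r)$, which gives $|b_p(\mathbf x)|=\beta_p|J_p(\kappa r)|\sim\kappa(2\pi|p|)^{-1/2}r^{|p|}$ as $|p|\to\infty$. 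The super-exponential growth of $\beta_p$ is exactly cancelled by the super-exponential decay of $J_p(\kappa r)$, leaving the geometric tail $r^{2|p|}$, which is summable precisely because $r<1$. Hence $\{\tau_p b_p(\mathbf x)\}_{p\in\mathbb Z}\in\ell^2(\mathbb Z)$.

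The remaining point is to justify that the formal identity above is an actual $\mathcal A$-norm convergent expansion, not just a pointwise one. I would argue as follows: the $\ell^2$ summability just established implies that the series $\sum_p\overline{\tau_pb_p(\mathbf x)}\,a_p$ converges in $\mathcal A$ to some limit $\psi_{\mathbf x}\in\mathcal A$. On the other hand, since $r<1$, the Jacobi--Anger series~\eqref{eq:Jacobi--Anger-epw} converges absolutely and uniformly on compact subsets of $Y$ (the partial sums are bounded on each compact by the same geometric tail argument, combined with the uniform boundedness of $|a_p(\mathbf y)|$ on compacts). Extracting a subsequence convergent almost everywhere in $Y$, one identifies the $\mathcal A$-limit $\psi_{\mathbf x}$ with the pointwise function $\overline{\phi_{\mathbf y}(\mathbf x)}$, so the latter belongs to $\mathcal A$. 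The main obstacle is only notational; the essential content is the cancellation between $\beta_p$ and $J_p(\kappa r)$ for $r<1$, together with the uniform bound $|\tau_p|\le\tau_+$ that makes the choice $z=1/4$ pay off.
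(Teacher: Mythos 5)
Your proposal follows essentially the same route as the paper: expand $\overline{\phi_{\mathbf y}(\mathbf x)}$ in the Hilbert basis $\{a_p\}$ via the Jacobi--Anger identity~\eqref{eq:Jacobi--Anger-apbp}, bound $|\tau_p|\le\tau_+$, and observe that $\beta_p^2|J_p(\kappa r)|^2\sim\frac{\kappa^2}{2\pi}\frac{r^{2|p|}}{|p|}$ gives a summable geometric tail for $r<1$. The extra care you take in identifying the $\mathcal A$-limit with the pointwise function is a minor refinement the paper leaves implicit; the argument is correct.
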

\begin{proof}
    Let \(\mathbf{x} \in B_{1}\) and define
    \(v_{\mathbf{x}} : \mathbf{y}
    \mapsto \overline{\mathrm{EW}_{\mathbf{y}}(\mathbf{x})}\).
    The Jacobi--Anger identity~\eqref{eq:Jacobi--Anger-apbp} reads
    $v_{\mathbf{x}}(\mathbf{y})
        = \sum_{p\in\mathbb{Z}} \overline{\tau_{p}} \;
        \overline{b_{p}(\mathbf{x})} \; a_{p}(\mathbf{y})$
        for all $\mathbf{y} \in Y$.
    Since \(\{a_{p}\}_{p\in\mathbb{Z}}\) is a Hilbert basis for
    \(\mathcal{A}\),
    if we write \(\mathbf{x} = (r,\theta) \in [0,1) \times [0,2\pi)\),
    we get
    \begin{equation}
        \|v_{\mathbf{x}}\|_{\mathcal{A}}^{2}
        = \sum_{p\in\mathbb{Z}} |\tau_{p} b_{p}(\mathbf{x})|^{2}
        \leq \tau_{+}^{2} \sum_{p\in\mathbb{Z}}
        \beta_{p}^{2} |J_{p}(\kappa r)|^{2}.
    \end{equation}
    Using the estimates~\eqref{eq:beta_explicit}
    and~\eqref{eq:bessel-asymptotic} from the proof of
    Lemma~\ref{lem:beta_asymptotic},
    we get 
    \begin{equation}
        \beta_{p}^{2} |J_{p}(\kappa r)|^{2}
        \sim \frac{\kappa^2}{2\pi} \frac{r^{2|p|}}{|p|},
        \qquad\text{as}\ |p|\to +\infty,
    \end{equation}
    from which we conclude that
    \(\|v_{\mathbf{x}}\|_{\mathcal{A}}^{} < \infty\).
\end{proof}

If \(\mathbf{x} \in \partial B_{1}\), so that \(r = |\mathbf{x}| = 1\),
then \(\mathbf{y} \mapsto \overline{\mathrm{EW}_{\mathbf{y}}(\mathbf{x})}\) does not
belong to \(\mathcal{A}\),
as is readily seen from the proof of Lemma~\ref{lem:epw-in-A}.

\subsection{Herglotz transform}

We introduce an integral operator $T$ that allows to write every Helmholtz
solution in $\mathcal B$ as a continuous linear combination of EPWs
weighted by an element of $\mathcal A$.
We also describe its adjoint operator $T^*$, the corresponding frame and Gram
operators $S$ and $G$, and prove some of their properties.
The terminology of this section is borrowed from \emph{Frame Theory},
see\ \cite{Christensen2016} for a reference on this field.

\paragraph{Synthesis operator.}

The first and most important definition concerns the transform that maps
Herglotz densities to Helmholtz solutions as we prove next.

\begin{definition}\label{def:Herglotz-transform}
    Using the weight~\eqref{eq:def-z-to-be-a-quarter},
    we introduce the Herglotz transform \(T\):
    for any \(v\in \mathcal{A}\), 
    \begin{equation}\label{eq:Herglotz-transform}
        \boxed{
            (Tv) (\mathbf{x})
            := \int_{Y}
            v(\mathbf{y})
            \mathrm{EW}_{\mathbf{y}}(\mathbf{x})
            \; w_{}^{2}(\mathbf{y})\mathrm{d}\mathbf{y},
            \qquad\forall \mathbf{x}\in B_{1}.
        }
    \end{equation}
\end{definition}

This operator is well-defined on \(\mathcal{A}\) thanks to
Lemma~\ref{lem:epw-in-A}.
In the setting of continuous-frame theory,
see e.g.\ \cite[Eq.~(5.27)]{Christensen2016},
this operator is called \emph{synthesis} operator.

The Herglotz transform \(T\) is bounded and invertible between
the space of Herglotz densities \(\mathcal{A}\) and the space of Helmholtz
solutions \(\mathcal{B}\).
\begin{theorem}\label{th:T_op}
    The operator \(T\) is bounded and invertible 
    from \(\mathcal{A}\) to  \(\mathcal{B}\):
    \begin{equation}\label{eq:T_op_bounds}
        T\;:\;
        \mathcal{A} \to \mathcal{B},\quad
        v \mapsto 
        \sum_{p\in\mathbb{Z}} \tau_{p}
        \left(v,\, a_{p}\right)_{\mathcal{A}} b_{p},\\
        \quad\text{and}\quad
        \tau_{-} \|v\|_{\mathcal{A}}
        \leq \|Tv\|_{\mathcal{B}} \leq
        \tau_{+} \|v\|_{\mathcal{A}}
        \ \forall v\in\mathcal{A}.
    \end{equation}
    Moreover, $T a_p=\tau_p b_p$ for all $p\in\mathbb Z$.
\end{theorem}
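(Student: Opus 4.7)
The plan is to reduce everything to the Hilbert-basis structures already in hand: write $Tv$ as an inner product in $\mathcal{A}$, expand using the orthonormal basis $\{a_p\}$, and then read off a series representation in the orthonormal basis $\{b_p\}$ of $\mathcal{B}$. The two-sided bound then follows directly from the uniform bounds on $|\tau_p|$ given by Lemma~\ref{lem:tau_bounds}.

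First I would observe that, by the definition of the inner product on $\mathcal{A}$ and Lemma~\ref{lem:epw-in-A}, for every fixed $\mathbf{x}\in B_1$ the map $\mathbf{y}\mapsto\overline{\phi_{\mathbf{y}}(\mathbf{x})}$ lies in $\mathcal{A}$, so that
\begin{equation*}
(Tv)(\mathbf{x}) = \bigl(v,\,\overline{\phi_{\cdot}(\mathbf{x})}\bigr)_{\mathcal{A}},
\qquad \forall\,\mathbf{x}\in B_1.
\end{equation*}
In particular $Tv$ is well-defined pointwise on $B_1$. The proof of Lemma~\ref{lem:epw-in-A} actually shows more: taking the complex conjugate of the Jacobi--Anger identity~\eqref{eq:Jacobi--Anger-apbp},
\begin{equation*}
\overline{\phi_{\mathbf{y}}(\mathbf{x})} = \sum_{p\in\mathbb{Z}} \overline{\tau_p}\,\overline{b_p(\mathbf{x})}\,a_p(\mathbf{y})
\end{equation*}
converges in $\mathcal{A}$ for each $\mathbf{x}\in B_1$, so the Fourier coefficients of $\overline{\phi_{\cdot}(\mathbf{x})}$ in the orthonormal basis $\{a_p\}$ are exactly $\overline{\tau_p}\,\overline{b_p(\mathbf{x})}$.

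Next I would apply Parseval in $\mathcal{A}$: using the Hilbert basis of Lemma~\ref{lem:a_Hilbert_basis},
\begin{equation*}
(Tv)(\mathbf{x}) = \bigl(v,\,\overline{\phi_{\cdot}(\mathbf{x})}\bigr)_{\mathcal{A}}
= \sum_{p\in\mathbb{Z}} (v,a_p)_{\mathcal{A}}\,\overline{\bigl(\overline{\phi_{\cdot}(\mathbf{x})},a_p\bigr)_{\mathcal{A}}}
= \sum_{p\in\mathbb{Z}} \tau_p\,(v,a_p)_{\mathcal{A}}\,b_p(\mathbf{x}).
\end{equation*}
To upgrade this pointwise identity to convergence in $\mathcal{B}$, I would use Lemma~\ref{lem:tau_bounds}: since $|\tau_p|\le\tau_+$ and $\sum_p|(v,a_p)_{\mathcal{A}}|^2=\|v\|_{\mathcal{A}}^2<\infty$, the sequence $\{\tau_p(v,a_p)_{\mathcal{A}}\}_{p\in\mathbb{Z}}$ belongs to $\ell^2(\mathbb{Z})$ and hence $\sum_{p}\tau_p(v,a_p)_{\mathcal{A}}\,b_p$ converges in $\mathcal{B}$ by Lemma~\ref{lem:b_Hilbert_basis}. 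The pointwise formula forces the two elements of $\mathcal{B}$ to agree, so this gives the claimed series representation and in particular $Tv\in\mathcal{B}$.

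The norm estimate~\eqref{eq:T_op_bounds} then follows by Parseval in $\mathcal{B}$:
\begin{equation*}
\|Tv\|_{\mathcal{B}}^2 = \sum_{p\in\mathbb{Z}} |\tau_p|^2\,|(v,a_p)_{\mathcal{A}}|^2,
\end{equation*}
combined with $\tau_-\le|\tau_p|\le\tau_+$ and Parseval in $\mathcal{A}$. For invertibility, the lower bound shows $T$ is injective with closed range, and for surjectivity I would exhibit an explicit preimage: given $u=\sum_p c_p b_p\in\mathcal{B}$ with $\sum_p|c_p|^2<\infty$, the function $v:=\sum_p \tau_p^{-1}c_p\,a_p$ is well-defined in $\mathcal{A}$ (its Fourier coefficients are square-summable since $|\tau_p^{-1}|\le\tau_-^{-1}$) and satisfies $Tv=u$ by the series formula just derived. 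No step here is especially difficult since Lemmas~\ref{lem:epw-in-A} and~\ref{lem:tau_bounds} already do the analytical heavy lifting; the only point requiring some care is the justification that the pointwise Parseval expansion of $Tv$ really coincides, as an element of $\mathcal{B}$, with the $\mathcal{B}$-convergent series of circular waves, which is what allows the argument to be phrased entirely within the Hilbert-basis framework.
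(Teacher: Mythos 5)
Your proposal is correct and follows essentially the same route as the paper: expand $\phi_{\mathbf y}(\mathbf x)$ via the Jacobi--Anger identity~\eqref{eq:Jacobi--Anger-apbp}, identify the coefficients of $Tv$ in the basis $\{b_p\}$ as $\tau_p(v,a_p)_{\mathcal A}$, apply Parseval in $\mathcal B$ together with Lemma~\ref{lem:tau_bounds}, and exhibit the explicit inverse $T^{-1}u=\sum_p\tau_p^{-1}(u,b_p)_{\mathcal B}a_p$. Your version is in fact slightly more careful than the paper's, since you justify the interchange of sum and integral by viewing $(Tv)(\mathbf x)$ as the inner product $(v,\overline{\phi_{\cdot}(\mathbf x)})_{\mathcal A}$ and invoking Lemma~\ref{lem:epw-in-A}, and you explicitly reconcile the pointwise and $\mathcal B$-convergent series.
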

\begin{proof}
    Using the Jacobi--Anger formula~\eqref{eq:Jacobi--Anger-apbp},
    for any \(v\in \mathcal{A}\) and \(\mathbf{x} \in B_{1}\) we get
    \begin{equation}
        \begin{aligned}
            (Tv)(\mathbf{x})
            &= \int_{Y}
            \mathrm{EW}_{\mathbf{y}}(\mathbf{x}) v(\mathbf{y})
            \; w_{}^{2}(\mathbf{y})\mathrm{d}\mathbf{y}
            = \int_{Y}
            \left(
                \sum_{p\in\mathbb{Z}} \tau_{p}\;
                b_{p}(\mathbf{x})\overline{a_{p}(\mathbf{y})}
            \right)\; v(\mathbf{y})
            \; w_{}^{2}(\mathbf{y})\mathrm{d}\mathbf{y}\\
            &= \sum_{p\in\mathbb{Z}} \tau_{p}
            \int_{Y}
                \overline{a_{p}(\mathbf{y})} \; v(\mathbf{y})
            \; w_{}^{2}(\mathbf{y})\mathrm{d}\mathbf{y}
            \; b_{p}(\mathbf{x})
            = \sum_{p\in\mathbb{Z}} \tau_{p}
            \left(v,\, a_{p}\right)_{\mathcal{A}}
            b_{p}(\mathbf{x}).
        \end{aligned}
    \end{equation}
    Hence, from Lemma~\ref{lem:b_Hilbert_basis},
    \(
    \|Tv\|_{\mathcal{B}}^2 = \sum_{p\in\mathbb{Z}} |\tau_{p}|^{2}
    |\left(v,\, a_{p}\right)_{\mathcal{A}}|^2,
    \)
    and the result~\eqref{eq:T_op_bounds} follows from
    Lemma~\ref{lem:a_Hilbert_basis} and Lemma~\ref{lem:tau_bounds}.
    It is readily checked that the inverse is given,
    for any \(u\in\mathcal{B}\), by
    \begin{equation}\label{eq:Tinv_op_series}
        T^{-1}u
        = \sum_{p\in\mathbb{Z}} \tau_{p}^{-1}
        \left(u,\, b_{p}\right)_{\mathcal{B}}
        a_{p}.
    \end{equation}
\end{proof}

From~\eqref{eq:Tinv_op_series},
the inverse operator \(T^{-1}\) can also be written as an integral operator:
for \(u \in \mathcal{B}\),
\begin{align*}
    (T^{-1}u)(\mathbf y)
    =&\int_{B_1} u(\mathbf x) \Psi (\mathbf{x}, \mathbf{y})
    \;\mathrm{d}\mathbf{x} + \kappa^{-2}
    \int_{B_1} \nabla u(\mathbf x) \cdot \nabla \Psi(\mathbf{x}, \mathbf{y})
    \;\mathrm{d}\mathbf{x},
    \qquad\forall \mathbf{y}\in Y,
    \\
    \text{where }\; \Psi (\mathbf{x}, \mathbf{y})
    :=&\sum_{p\in\mathbb Z} 
    \tau_{p}^{-1} a_{p}(\mathbf{y}) \overline{b_{p}(\mathbf{x})}
    \qquad\forall \mathbf{x}\in B_{1},\;\mathbf{y}\in Y.
\end{align*}

The integral representation \(Tv\) in \eqref{eq:Herglotz-transform}
is similar to the Herglotz representation~\eqref{eq:Herglotz-ppw}.
This is the reason why we refer to elements of \(\mathcal{A}\) as
\emph{Herglotz densities}.
For any \(p \in \mathbb{Z}\),
the Herglotz densities \(\tau_{p}^{-1}a_{p}\) of the circular waves \(b_{p}\)
are bounded in the \(\mathcal{A}\)-norm by \(\tau_{-}^{-1}\),
hence uniformly with respect to the index \(p\).
This should be contrasted with the standard Herglotz
representation~\eqref{eq:Herglotz-representation-of-bp-using-ppw}
using only PPWs, where the associated Herglotz densities
cannot be bounded uniformly with respect to the index \(p\)
in \(L^{2}([0,2\pi])\).
As we explained in Section~\ref{sec:Herglotz-representation},
not all Helmholtz solutions admit a bounded Herglotz representation that uses only
PPWs~\eqref{eq:Herglotz-ppw}
(with density \(v\in L^{2}([0,2\pi])\)).
In contrast, using EPWs the generalized
Herglotz representation~\eqref{eq:Herglotz-transform} can represent any
Helmholtz solution.
Indeed, since \(T\) is an isomorphism between \(\mathcal{A}\) and
\(\mathcal{B}\), for any \(u\in\mathcal{B}\), there exists a unique
\(v\in\mathcal{A}\) such that \(u=Tv\).
The price to pay for this result is the need for a two-dimensional parameter
domain, the cylinder $Y$, in place of a one-dimensional one, the interval $[0,2\pi)$,
and thus of a double integral; the added dimension corresponds to the
evanescence parameter $\zeta$.

Theorem~\ref{th:T_op} is a stability result stated at the continuous level.
Next, we aim to obtain a discrete version of this integral
representation.
    
\paragraph{Analysis operator.}

In the continuous-frame setting, see\ \cite[Eq.~(5.28)]{Christensen2016}, the
adjoint operator \(T^{*}\) of \(T\) is called \emph{analysis} operator.
\begin{lemma}\label{lem:Tstar_op}
    The adjoint \(T^{*}\) of \(T\) is given for any \(u\in\mathcal{B}\) by
    $   (T^{*}u)(\mathbf{y}) :=
        \left( u,\, \mathrm{EW}_{\mathbf{y}}\right)_{\mathcal{B}}$, 
    $\forall \mathbf{y}\in Y$.
    The operator \(T^{*}\) is bounded and invertible on \(\mathcal{B}\):
    \begin{equation}
        T^{*}\;:\;
        \mathcal{B} \to \mathcal{A},\ 
        u \mapsto 
        \sum_{p\in\mathbb{Z}} \overline{\tau_{p}}
        \left(u,\, b_{p}\right)_{\mathcal{B}} a_{p},
        \quad\text{and}\quad
        \tau_{-} \|u\|_{\mathcal{B}}
        \leq \|T^{*}u\|_{\mathcal{A}} \leq
        \tau_{+} \|u\|_{\mathcal{B}},
        \quad\forall u\in\mathcal{B}.
    \end{equation}
\end{lemma}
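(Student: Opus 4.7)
The plan is to identify $T^*$ via its defining property $(Tv,u)_{\mathcal{B}} = (v,T^*u)_{\mathcal{A}}$ and then verify both the integral and series representations given in the statement.

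First I would compute $T^*$ using the series representation of $T$ from Theorem~\ref{th:T_op}. For any $v \in \mathcal{A}$ and $u \in \mathcal{B}$, expanding
\begin{equation*}
    (Tv, u)_{\mathcal{B}}
    = \Big(\sum_{p\in\mathbb{Z}} \tau_p (v, a_p)_{\mathcal{A}} b_p,\, u\Big)_{\mathcal{B}}
    = \sum_{p\in\mathbb{Z}} \tau_p (v, a_p)_{\mathcal{A}} \overline{(u, b_p)_{\mathcal{B}}},
\end{equation*}
and comparing with the Parseval identity $(v, T^*u)_{\mathcal{A}} = \sum_p (v, a_p)_{\mathcal{A}} \overline{(T^*u, a_p)_{\mathcal{A}}}$ in the Hilbert basis of Lemma~\ref{lem:a_Hilbert_basis}, I read off $(T^*u, a_p)_{\mathcal{A}} = \overline{\tau_p}(u, b_p)_{\mathcal{B}}$. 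This yields the series formula $T^*u = \sum_p \overline{\tau_p}(u, b_p)_{\mathcal{B}} a_p$. Convergence in $\mathcal{A}$ is automatic from the uniform bound $|\tau_p| \leq \tau_+$ of Lemma~\ref{lem:tau_bounds} and $\sum_p |(u,b_p)_{\mathcal{B}}|^2 = \|u\|_{\mathcal{B}}^2 < \infty$.

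Next I would recover the pointwise integral representation $(T^*u)(\mathbf{y}) = (u, \phi_{\mathbf{y}})_{\mathcal{B}}$. Using the Jacobi--Anger identity~\eqref{eq:Jacobi--Anger-apbp}, $\phi_{\mathbf{y}} = \sum_q \tau_q \overline{a_q(\mathbf{y})} b_q$, and the orthonormality of $\{b_q\}_{q\in\mathbb{Z}}$ (Lemma~\ref{lem:b_Hilbert_basis}), I get $(b_p, \phi_{\mathbf{y}})_{\mathcal{B}} = \overline{\tau_p} a_p(\mathbf{y})$. Hence for a general $u = \sum_p (u,b_p)_{\mathcal{B}} b_p \in \mathcal{B}$,
\begin{equation*}
    (u, \phi_{\mathbf{y}})_{\mathcal{B}}
    = \sum_{p\in\mathbb{Z}} (u, b_p)_{\mathcal{B}} \, \overline{\tau_p} \, a_p(\mathbf{y}),
\end{equation*}
which coincides with the pointwise evaluation of the series for $T^*u$. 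The required interchange of sum and inner product is justified by the Cauchy--Schwarz bound $\sum_p |(u,b_p)_{\mathcal{B}}||\tau_p||a_p(\mathbf y)| \leq \tau_+ \|u\|_{\mathcal{B}} \|\mathbf{y}\mapsto\overline{\phi_{\mathbf{y}}(\cdot)}\|_{\mathcal A}^{1/2}$-type estimates, using Lemma~\ref{lem:epw-in-A}.

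Finally, the two-sided norm estimate follows at once from Parseval's identity in both orthonormal bases:
\begin{equation*}
    \|T^*u\|_{\mathcal{A}}^2
    = \sum_{p\in\mathbb{Z}} |\tau_p|^2 \, |(u, b_p)_{\mathcal{B}}|^2,
\end{equation*}
combined with $\tau_- \leq |\tau_p| \leq \tau_+$ (Lemma~\ref{lem:tau_bounds}) and $\|u\|_{\mathcal{B}}^2 = \sum_p |(u,b_p)_{\mathcal{B}}|^2$. Invertibility of $T^*$ then follows either from these two-sided bounds, which exhibit the explicit inverse $(T^*)^{-1}w = \sum_p \overline{\tau_p}^{-1}(w, a_p)_{\mathcal{A}} b_p$ for $w \in \mathcal{A}$, or, more abstractly, from the fact that the adjoint of a boundedly invertible operator between Hilbert spaces is itself boundedly invertible with $(T^*)^{-1} = (T^{-1})^*$. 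There is no substantial obstacle in this proof; the only mild point to check is the convergence and term-by-term interchange in the two representations, and both are straightforward from the Hilbert-basis structure already established.
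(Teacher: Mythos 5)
Your proposal is correct and follows essentially the same route as the paper's proof: both identify $T^*$ from the duality pairing, use the Jacobi--Anger expansion $\phi_{\mathbf y}=\sum_p\tau_p\overline{a_p(\mathbf y)}\,b_p$ together with the Hilbert bases of Lemmas~\ref{lem:a_Hilbert_basis} and~\ref{lem:b_Hilbert_basis} to obtain the series form, and conclude the two-sided bounds from Parseval and Lemma~\ref{lem:tau_bounds}. The only (cosmetic) difference is the order of steps -- the paper first derives the pointwise formula $(T^*u)(\mathbf y)=(u,\phi_{\mathbf y})_{\mathcal B}$ by exchanging the integral in \eqref{eq:Herglotz-transform} with the $\mathcal B$-inner product and then expands, whereas you first read off the coefficients $(T^*u,a_p)_{\mathcal A}$ from the series representation of $T$ and then recover the pointwise formula.
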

\begin{proof}
    We have, for any \(v\in\mathcal{A}\) and \(u\in\mathcal{B}\)
    \begin{equation}
        \begin{aligned}
            \left(Tv,\, u\right)_{\mathcal{B}}
            &= 
            \left(\int_{Y}
                \mathrm{EW}_{\mathbf{y}} v(\mathbf{y})
                \; w_{}^{2}(\mathbf{y})\mathrm{d}\mathbf{y},
                \, u
            \right)_{\mathcal{B}}
            = \int_{Y} v(\mathbf{y})
            \left(
                \mathrm{EW}_{\mathbf{y}},
                \, u
            \right)_{\mathcal{B}}
            \; w_{}^{2}(\mathbf{y})\mathrm{d}\mathbf{y}\\
            &= \left(
                v,\,
                \overline{\left(
                    \mathrm{EW}_{\mathbf{y}},
                    \, u
                \right)_{\mathcal{B}}}
            \right)_{\mathcal{A}}
            = \left(
                v,\,
                \left(
                    u,
                    \, \mathrm{EW}_{\mathbf{y}}
                \right)_{\mathcal{B}}
            \right)_{\mathcal{A}}.
        \end{aligned}
    \end{equation}
    In addition, using the Jacobi--Anger formula~\eqref{eq:Jacobi--Anger-apbp},
    for any \(u\in \mathcal{B}\)
    and \(\mathbf{y} \in Y\)
    \begin{equation}
        (T^{*}u)(\mathbf{y})
        = \left(
            u,\, \mathrm{EW}_{\mathbf{y}}
        \right)_{\mathcal{B}}
        = \bigg(
            u,\,
            \sum_{p\in\mathbb{Z}} {\tau_{p}}\;
            \overline{a_{p}(\mathbf{y})}{b_{p}}
        \bigg)_{\mathcal{B}}
        = \sum_{p\in\mathbb{Z}} \overline{\tau_{p}}
        \left(u,\, b_{p}\right)_{\mathcal{B}}
        a_{p}(\mathbf{y}).
    \end{equation}
    From Lemma~\ref{lem:b_Hilbert_basis},
     $   \|T^{*}u\|_{\mathcal{A}}^2
        = \sum_{p\in\mathbb{Z}} |\tau_{p}|^{2}
        |\left(u,\, b_{p}\right)_{\mathcal{B}}|^2,
    $
    and the result follows from Lemma~\ref{lem:tau_bounds}.
\end{proof}

\paragraph{Frame and Gram operators.}

We introduce two other important operators in Frame Theory.

\begin{corollary}\label{cor:SG_SPD}
    The \emph{frame operator} \(S:=TT^{*}\) and
    the \emph{Gram operator} \(G:=T^{*}T\)
    are bounded, invertible, self-adjoint and positive operators:
    \begin{equation}
        \begin{aligned}
            & S:=TT^{*} \;:\; \mathcal{B} \to \mathcal{B},\ 
            u \mapsto \sum_{p\in\mathbb{Z}} |\tau_{p}|^{2}
            \left(u,\, b_{p}\right)_{\mathcal{B}} b_p,
            \quad \text{and} \quad
            \tau_{-}^{2} \|u\|_{\mathcal{B}}
            \leq \|Su\|_{\mathcal{B}} \leq
            \tau_{+}^{2} \|u\|_{\mathcal{B}},\ \forall u \in\mathcal{B},\\
            & G:=T^{*}T \;:\; \mathcal{A} \to \mathcal{A},\ 
            v \mapsto \sum_{p\in\mathbb{Z}} |\tau_{p}|^{2}
            \left(v,\, a_{p}\right)_{\mathcal{A}} a_p,
            \quad \text{and} \quad
            \tau_{-}^{2} \|v\|_{\mathcal{A}}
            \leq \|Gv\|_{\mathcal{A}} \leq
            \tau_{+}^{2} \|v\|_{\mathcal{A}},\ \forall v \in\mathcal{A}.
        \end{aligned}
    \end{equation}
\end{corollary}
\begin{proof}
    This result stems directly from Theorem~\ref{th:T_op}
    and Lemma~\ref{lem:Tstar_op}.
\end{proof}

The frame operator admits the more explicit formula:
for any \(u \in \mathcal{B}\),
\begin{equation}
    Su(\mathbf{x}) = 
    \int_{Y} \left(u,\, \mathrm{EW}_{\mathbf{y}}\right)_{\mathcal{B}}
    \mathrm{EW}_{\mathbf{y}}(\mathbf{x})
    \; w_{}^{2}(\mathbf{y})\mathrm{d}\mathbf{y},
    \qquad\forall \mathbf{x}\in B_{1}.
\end{equation}

\paragraph{A continuous frame result.}

We are now ready to prove that EPWs form a continuous
frame for the space of Helmholtz solutions in the unit disk.
We recall \cite[Def.~5.6.1]{Christensen2016}:
given a complex Hilbert space $\mathcal H$ and a measure space $M$ with
positive measure $\mu$, a family $\{f_k\}_{k\in M}\subset\mathcal H$ is called
``continuous frame'' if, $\forall f\in\mathcal H$, $k\mapsto\langle
f,f_k\rangle$ is measurable in $M$, and $\exists A,B>0$ such that
$A\|f\|^2\le\int_M|\langle f,f_k\rangle|^2\mathrm d\mu(k)\le B\|f\|^2$.

\begin{theorem}\label{th:continuous_frame}
    The family \(\left\{\mathrm{EW}_{\mathbf{y}}\right\}_{\mathbf{y}\in Y}\)
    is a continuous frame for \(\mathcal{B}\).
    Besides, the optimal frame bounds are \(A=\tau_{-}^2\) and
    \(B=\tau_{+}^2\).
\end{theorem}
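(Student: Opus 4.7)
The plan is to derive this theorem directly from the analysis operator bounds established in Lemma~\ref{lem:Tstar_op}. Recall that a family $\{\phi_{\mathbf y}\}_{\mathbf y\in Y}$ is a continuous frame for $\mathcal B$ (with the measure $w^2(\mathbf y)\,\mathrm d\mathbf y$ on $Y$) provided: (i) for every $u\in\mathcal B$ the map $\mathbf y\mapsto(u,\phi_{\mathbf y})_{\mathcal B}$ is measurable; and (ii) there exist constants $0<A\le B<\infty$ such that
\begin{equation*}
    A\,\|u\|_{\mathcal B}^{2}
    \le \int_{Y}|(u,\phi_{\mathbf y})_{\mathcal B}|^{2}\,w^{2}(\mathbf y)\,\mathrm d\mathbf y
    \le B\,\|u\|_{\mathcal B}^{2},\qquad\forall u\in\mathcal B.
\end{equation*}

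My first step would be to identify the integrand in (ii) as $|T^{*}u(\mathbf y)|^{2}$. Indeed, Lemma~\ref{lem:Tstar_op} gives exactly $(T^{*}u)(\mathbf y)=(u,\phi_{\mathbf y})_{\mathcal B}$, which is a fortiori measurable since $T^{*}u\in\mathcal A\subset L^{2}(Y;w^{2})$. By definition of the $\mathcal A$-norm, the middle quantity in (ii) therefore equals $\|T^{*}u\|_{\mathcal A}^{2}$. Squaring the bounds $\tau_{-}\|u\|_{\mathcal B}\le\|T^{*}u\|_{\mathcal A}\le\tau_{+}\|u\|_{\mathcal B}$ from Lemma~\ref{lem:Tstar_op} yields the frame inequality with $A=\tau_{-}^{2}$ and $B=\tau_{+}^{2}$.

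For optimality of these bounds, I would test on the Hilbert basis $\{b_{p}\}_{p\in\mathbb Z}$. From the explicit series expansion of $T^{*}$ in Lemma~\ref{lem:Tstar_op}, one gets $T^{*}b_{p}=\overline{\tau_{p}}\,a_{p}$, whence
\begin{equation*}
    \int_{Y}|(b_{p},\phi_{\mathbf y})_{\mathcal B}|^{2}\,w^{2}(\mathbf y)\,\mathrm d\mathbf y
    =\|T^{*}b_{p}\|_{\mathcal A}^{2}
    =|\tau_{p}|^{2}\|a_{p}\|_{\mathcal A}^{2}
    =|\tau_{p}|^{2},
\end{equation*}
while $\|b_{p}\|_{\mathcal B}^{2}=1$. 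Since $\tau_{+}=\sup_{p}|\tau_{p}|$ and $\tau_{-}=\inf_{p}|\tau_{p}|$ by \eqref{eq:def_taupm}, choosing sequences $p_{n}^{\pm}\in\mathbb Z$ along which $|\tau_{p_{n}^{\pm}}|$ approaches $\tau_{\pm}$ shows that no smaller upper bound and no larger lower bound are admissible, so $B=\tau_{+}^{2}$ and $A=\tau_{-}^{2}$ are optimal.

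I expect no serious obstacle: the bulk of the work has already been done in Lemma~\ref{lem:Tstar_op}, and the statement is essentially a reinterpretation of the bounded invertibility of $T^{*}$ in frame-theoretic language. The only minor point to watch is making sure the measurability requirement (i) is addressed cleanly, which is handled by the identification $\mathbf y\mapsto(u,\phi_{\mathbf y})_{\mathcal B}=T^{*}u\in\mathcal A$.
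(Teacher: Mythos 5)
Your proposal is correct and follows essentially the same route as the paper: both identify the frame integral with $\|T^{*}u\|_{\mathcal{A}}^{2}=(Su,u)_{\mathcal{B}}=\sum_{p}|\tau_{p}|^{2}|(u,b_{p})_{\mathcal{B}}|^{2}$, obtain the bounds from Lemma~\ref{lem:Tstar_op} (equivalently Corollary~\ref{cor:SG_SPD}), and read off optimality from this modal expression; your explicit test on the basis elements $b_{p}$ just spells out what the paper leaves implicit.
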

\begin{proof}
    We need to verify the definition of a continuous frame,
    see\ \cite[Def.~5.6.1]{Christensen2016}.
    For any \(u\in\mathcal{B}\), the measurability of
        $\mathbf{y}\mapsto\left(u,\mathrm{EW}_{\mathbf{y}}\right)_{\mathcal{B}}
        = (T^{*}u)(\mathbf{y})$,
    stems from \(T^{*}u \in \mathcal{A}\) according to Lemma~\ref{lem:Tstar_op}
    and \(\mathcal{A} \subset L^{2}(Y;w^{2})\).
    The frame condition, namely
    \begin{equation}
        A \|u\|_{\mathcal{B}}^2 \leq
        \int_{Y} |\left(u,\, \mathrm{EW}_{\mathbf{y}}\right)_{\mathcal{B}}|^2
        \; w_{}^{2}(\mathbf{y})\mathrm{d}\mathbf{y}
        \leq B \|u\|_{\mathcal{B}}^2,
        \qquad\forall u\in\mathcal{B},
    \end{equation}
    for some constants \(A\) and \(B\)
    is a consequence of the boundedness and positivity of the frame operator
    \(S\) which was established in Corollary~\ref{cor:SG_SPD}.
    Indeed, for any \(u\in\mathcal{B}\), we have
    \begin{equation}
        \int_{Y} |\left(u,\, \mathrm{EW}_{\mathbf{y}}\right)_{\mathcal{B}}|^2
        \; w_{}^{2}(\mathbf{y})\mathrm{d}\mathbf{y}
        = \left(Su,\, u\right)_{\mathcal{B}}
        = \sum_{p\in\mathbb{Z}} |\tau_{p}|^{2}
        |\left(u,\, b_{p}\right)_{\mathcal{B}}|^2,
    \end{equation}
    which also establishes the optimality of the claimed frame bounds.
\end{proof}

\subsection{The reproducing kernel property}

The continuous frame result implies additional structure on the Herglotz
density space \(\mathcal{A}\), which then allows to characterize the preimages
of the EPWs under the integral transform \(T\).
For a general reference on Reproducing Kernel Hilbert Spaces (RKHS), we refer
to\ \cite{Paulsen2016}.

\begin{lemma}\label{lem:rkhs}
    The range of the analysis operator \(T^{*}\),
    i.e.\ the space $\mathcal A$ defined in \eqref{eq:atildep},
    has the reproducing kernel property.
    The reproducing kernel is given by
    \begin{equation}\label{eq:kernel_expansion}
        K(\mathbf{z},\mathbf{y})
        = K_{\mathbf{y}}(\mathbf{z})
        = \left(K_{\mathbf{y}},\, K_{\mathbf{z}}\right)_{\mathcal{A}}
        = \sum_{p\in\mathbb{Z}}
        \overline{a_{p}(\mathbf{y})}
        a_{p}(\mathbf{z}),
        \qquad\forall \mathbf{y}, \mathbf{z}\in Y,
    \end{equation}
    with pointwise convergence of the series and
    where  \(K_{\mathbf{y}}\in\mathcal{A}\) is the (unique) Riesz
    representation of the evaluation functional at \(\mathbf{y} \in Y\), namely
    \begin{equation}
        v(\mathbf{y}) = \left(v, K_{\mathbf{y}}\right)_{\mathcal{A}},
        \qquad\forall v\in\mathcal{A}.
    \end{equation}
\end{lemma}
\begin{proof}
    Take any \(v\in\mathcal{A}\) and let \(u\in\mathcal{B}\) such that
    \(v=T^{*}u\),
    which exists thanks to Lemma~\ref{lem:Tstar_op}.
    From Corollary~\ref{cor:SG_SPD}, we have
    \begin{equation}
        u = S^{-1}S u
        = \int_{Y} \left(u,\, \mathrm{EW}_{\mathbf{z}}\right)_{\mathcal{B}}
        S^{-1}\mathrm{EW}_{\mathbf{z}}
        \; w_{}^{2}(\mathbf{z})\mathrm{d}\mathbf{z}.
    \end{equation}
    Then we obtain the reproducing identity,
    for any \(\mathbf{y}\in Y\)
    \begin{equation}
        \begin{aligned}
            v(\mathbf{y}) &= (T^{*}u)(\mathbf{y}) = \left(u,\, \mathrm{EW}_{\mathbf{y}}\right)_{\mathcal{B}} 
            = \int_{Y} \left(u,\, \mathrm{EW}_{\mathbf{z}}\right)_{\mathcal{B}}
            \left(S^{-1}\mathrm{EW}_{\mathbf{z}},\, \mathrm{EW}_{\mathbf{y}}\right)_{\mathcal{B}} 
            \; w_{}^{2}(\mathbf{z})\mathrm{d}\mathbf{z}\\
            &= \int_{Y} v(\mathbf{z})
            \left(S^{-1}\mathrm{EW}_{\mathbf{z}},\, \mathrm{EW}_{\mathbf{y}}\right)_{\mathcal{B}} 
            \; w_{}^{2}(\mathbf{z})\mathrm{d}\mathbf{z}
            = \left( v,\, K_{\mathbf{y}}\right)_{\mathcal{A}},
        \end{aligned}
    \end{equation}
    where we introduced (the Riesz representation of) the evaluation functional
    at the point \(\mathbf{y}\) defined as
    $   K_{\mathbf{y}}(\mathbf{z})
        := \left(\mathrm{EW}_{\mathbf{y}},\, S^{-1}\mathrm{EW}_{\mathbf{z}}\right)_{\mathcal{B}} 
        $, $\forall\mathbf{z}\in Y.
    $ 
    It is a direct consequence of Corollary~\ref{cor:SG_SPD}
    that the kernel admits the series
    representation~\eqref{eq:kernel_expansion}.
    Alternatively, we refer to\ \cite[Th.~2.4]{Paulsen2016} for a direct proof
    of this result (valid in the general setting), since
    \(\{a_{p}\}_{p\in\mathbb{Z}}\) is an orthonormal basis for \(\mathcal{A}\).
\end{proof}

Lemma~\ref{lem:rkhs} does not stem from any specific property of
\(\mathcal{A}\) or the EPWs, it follows only from the continuous frame result.
The reproducing kernel property implies that pointwise
evaluation of elements of \(\mathcal{A}\) in the cylinder \(Y\) is a
continuous operation\ \cite[Def.~1.2]{Paulsen2016}:
for all \(\mathbf{y}\in Y\) there is \(c>0\) such that
\begin{equation}
    |v(\mathbf{y})| = |\left(v, K_{\mathbf{y}}\right)_{\mathcal{A}}|
    \leq c \|v\|_{\mathcal{A}},
    \qquad\forall v\in\mathcal{A}.
\end{equation}
Examples of (normalized) evaluation functionals are given in Figure~\ref{fig:wKy_k16}.

\begin{figure}
    \centering
    \includegraphics[width=0.45\textwidth]{./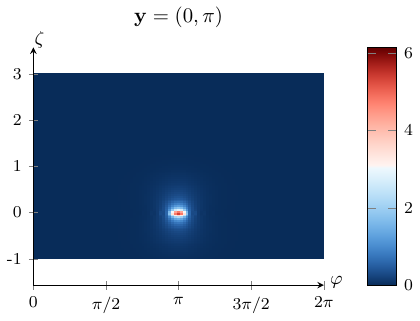}
    \includegraphics[width=0.45\textwidth]{./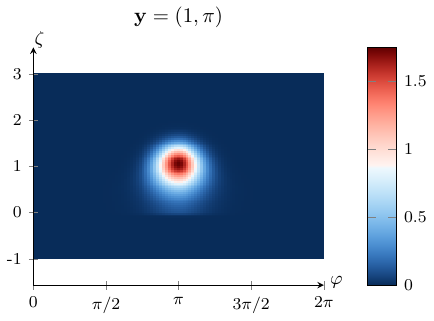}
    \caption{Representation of normalized evaluation functionals
    \(|w K_{\mathbf{y}}| / \|K_{\mathbf{y}}\|_{\mathcal{A}}\)
    in the cylinder \(Y\)
    for wavenumber \(\kappa=16\).}\label{fig:wKy_k16}
\end{figure}

The interest in introducing the reproducing kernel property stems from the
following result, which is a direct consequence of Lemma~\ref{lem:rkhs},
Theorem~\ref{th:T_op},
and the Jacobi--Anger identity~\eqref{eq:Jacobi--Anger-apbp}.

\begin{corollary}\label{cor:relation_between_K_and_epw}
    The EPWs are the images under $T$ of the Riesz
    representation of the evaluation functionals, namely
    \begin{equation}
        \mathrm{EW}_{\mathbf{y}} = TK_{\mathbf{y}},
        \qquad\forall \mathbf{y}\in Y.
    \end{equation}
\end{corollary}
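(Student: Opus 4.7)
The plan is to unpack the three cited ingredients and chain them together. First I would take the kernel expansion provided by Proposition~\ref{prop:rkhs},
\begin{equation*}
    K_{\mathbf{y}} = \sum_{p\in\mathbb{Z}} \overline{a_{p}(\mathbf{y})}\, a_{p},
\end{equation*}
where the series converges in $\mathcal{A}$ (indeed, by Parseval in the Hilbert basis $\{a_p\}$, the squared $\mathcal{A}$-norm of $K_{\mathbf{y}}$ equals $\sum_p |a_p(\mathbf{y})|^2 = K(\mathbf{y},\mathbf{y})$, which is finite by the reproducing property at $\mathbf{y}$).

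Next I would apply the Herglotz transform $T$ to this identity. Since $T \colon \mathcal{A} \to \mathcal{B}$ is bounded by Theorem~\ref{th:T_op}, it commutes with the $\mathcal{A}$-convergent series, yielding
\begin{equation*}
    TK_{\mathbf{y}} = \sum_{p\in\mathbb{Z}} \overline{a_{p}(\mathbf{y})}\, Ta_{p}.
\end{equation*}
I would then substitute the identity $Ta_{p} = \tau_{p} b_{p}$ from Corollary~\ref{cor:Ta=tb} to obtain
\begin{equation*}
    TK_{\mathbf{y}} = \sum_{p\in\mathbb{Z}} \tau_{p}\,\overline{a_{p}(\mathbf{y})}\, b_{p}.
\end{equation*}

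Finally, I would compare this with the Jacobi--Anger expansion~\eqref{eq:Jacobi--Anger-apbp}, which states precisely that $\phi_{\mathbf{y}} = \sum_{p} \tau_{p}\,\overline{a_{p}(\mathbf{y})}\, b_{p}$ in $\mathcal{B}$, so both sides coincide and the identity $\phi_{\mathbf{y}} = TK_{\mathbf{y}}$ follows. There is no real obstacle here; the only point to be a little careful about is the justification of moving $T$ inside the infinite sum, but this is immediate from continuity of $T$. The argument is essentially a one-line bookkeeping exercise combining the explicit form of the reproducing kernel, the eigenrelation $Ta_{p} = \tau_{p} b_{p}$, and the modal expansion of evanescent plane waves.
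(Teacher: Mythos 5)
Your proposal is correct and follows exactly the route the paper intends: the paper states the corollary as a direct consequence of Proposition~\ref{prop:rkhs}, Corollary~\ref{cor:Ta=tb} and the Jacobi--Anger identity~\eqref{eq:Jacobi--Anger-apbp}, and your argument simply chains these three ingredients together, with the (correct) extra care about $\mathcal{A}$-convergence of the kernel series and the continuity of $T$.
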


As a consequence, the construction of an approximation of a Helmholtz
solution \(u\in\mathcal{B}\) as an expansion of EPWs is, up to the
isomorphism \(T\), equivalent to the approximation of its Herglotz density
\(v:=T^{-1}u\in\mathcal{A}\)
as an expansion of evaluation functionals, i.e.
\begin{equation}\label{eq:equiv-approx-pb}
    v \approx \sum_{m=1}^{M} \mu_{m} K_{\mathbf{y}_{m}}
    \qquad
    \substack{T\\\displaystyle\longrightarrow\\\displaystyle\longleftarrow\\T^{-1}}
    \qquad
    u \approx \sum_{m=1}^{M} \mu_{m} \mathrm{EW}_{\mathbf{y}_{m}},
\end{equation}
for some set of coefficients \(\boldsymbol{\mu} = \{\mu_{m}\}_{m=1}^{M}\).
This remark justifies the use of
the sampling techniques described in the next
section to discretize the integral representation in~\eqref{eq:Herglotz-transform}.
Section~\ref{sec:numerics} provides numerical evidence that 
such approximations can be built, for a suitable normalization of the
sets
\(\{K_{\mathbf{y}_{m}}\}_{m}\) and \(\{\mathrm{EW}_{\mathbf{y}_{m}}\}_{m}\).

\section{A concrete evanescent plane wave approximation set}\label{sec:discrete-recipe}

We describe a method for the numerical approximation of a general
Helmholtz solution in the unit disk by EPWs.
We exploit the equivalence of this approximation problem with the approximation
problem of the corresponding Herglotz density, see~\eqref{eq:equiv-approx-pb}.
The main idea is to adapt the sampling procedure
of\ \cite{Hampton2015,Cohen2017,Migliorati2022}
(sometimes called \emph{coherence-optimal sampling})
to our case, in order to generate a distribution of
sampling nodes in the cylinder \(Y\) that will be used to reconstruct
the Herglotz density.
While the numerical recipe that we describe below is found to be numerically
very effective, see Section~\ref{sec:numerics}, our theoretical analysis
still lacks a formal proof of the accuracy and stability of the approximation
of Helmholtz solutions using EPWs.

Let \(u \in \mathcal{B}\) be the Helmholtz solution, target of the
approximation problem, and let \(v := T^{-1}u \in \mathcal{A}\) be its
associated Herglotz density.
Let also some tolerance \(\eta>0\) be given.

\subsection{Truncation of the modal expansion}

Since \(u\) (resp.\ \(v\)) \textit{a priori} lives in an infinite dimensional
space \(\mathcal{B}\) (resp.\ \(\mathcal{A}\)),
the idea behind the construction of finite dimensional approximation sets is to
exploit the natural hierarchy of finite dimensional subspaces constructed by
truncation of the Hilbert basis \(\left\{{b}_{p}\right\}_{p\in\mathbb{Z}}\).

\paragraph{Truncation in the Helmholtz solution space.}

For any \(P \in \mathbb{N}\), we define
\begin{equation}
    \mathcal{B}_{P} := \operatorname{span}
    \left\{{b}_{p}\right\}_{|p| \leq P}
    \subset \mathcal{B},
    \qquad\text{and}\qquad
    \Pi_{P}
    \;:\; \mathcal{B}\to \mathcal{B},\ 
    u\mapsto
    \sum_{|p|\leq P} \left(u,\, b_{p}\right)_{\mathcal{B}} b_{p}.
\end{equation}
Here \(\Pi_{P}\) is the orthogonal projection from \(\mathcal{B}\)
onto the finite dimensional subspace \(\mathcal{B}_{P}\).
A natural approach to compute an approximation of \(u \in \mathcal{B}\) is to
approximate its projection onto \(\mathcal{B}_{P}\), namely
\begin{equation}
    u_{P} := \Pi_{P}u \in \mathcal{B}_{P},
    \qquad\forall P \in \mathbb{N},
\end{equation}
for some \(P\) large enough.
It is immediate that the sequence of projections
\(\{u_{P}\}_{P\in\mathbb{N}}\) converges to \(u\) in \(\mathcal{B}\).
In particular, we can define for any \(\eta > 0\)
\begin{equation}\label{eq:def-Pstar}
    P^{*} = P^{*}(u,\eta) := \min
    \big\{
        P \in \mathbb{N} \;|\;
        \|u - u_{P}\|_{\mathcal{B}} < \eta \|u\|_{\mathcal B}
    \big\}.
\end{equation}

Unfortunately, it is not possible to compute such a \(P^{*}\) in most practical
configurations.
It may be possible though to give estimates on \(P^{*}\), based on some
regularity assumption on \(u\) and the decay of its coefficients in its modal
expansion.
For instance, it might be physically realistic to assume that all coefficients
of the propagative modes \(|p| \leq \kappa\) are \(\mathcal{O}(1)\) and the
coefficients associated to the subsequent evanescent modes \(|p| \geq \kappa\)
decay in modulus with a given algebraic or exponential rate.

\paragraph{Truncation in the Herglotz density space.}

Similarly, for any \(P \in \mathbb{N}\), we define
\begin{equation}
    \mathcal{A}_{P} := \operatorname{span}
    \left\{{a}_{p}\right\}_{|p| \leq P}
    = T^{-1} \mathcal{B}_{P}
    \subset \mathcal{A},
    \qquad\text{and}\qquad
    v_{P} := T^{-1} u_{P} \in \mathcal{A}_{P},
    \qquad\forall P \in \mathbb{N}.
\end{equation}
Theorem~\ref{th:T_op} implies that the sequence \(\{v_{P}\}_{P\in\mathbb{N}}\)
converges to \(v\) in \(\mathcal{A}\).
In particular, for any \(P \geq P^{*}\),
where \(P^{*}\) was defined in~\eqref{eq:def-Pstar}, we have
\begin{equation}\label{eq:Fourier-truncation-vP}
    \|v - v_{P}\|_{\mathcal{A}}
    \leq \tau_{-}^{-1} \|u - u_{P}\|_{\mathcal{B}}
    < \tau_{-}^{-1}\eta \|u\|_{\mathcal B}.
\end{equation}

\subsection{Parameter sampling in the cylinder \texorpdfstring{$Y$}{Y}}

Our objective is to approximate
the truncated Fourier series
\(u_{P} = \Pi_{P} u \in \mathcal{B}_{P}\)
for some \(P \in \mathbb{N}\), instead of \(u\).
Up to the Herglotz transform, this problem is equivalent to the approximation
of \(v_{P} = T^{-1} u_{P} \in \mathcal{A}_{P}\).
In this subsection let us fix a \(P \in \mathbb{N}\),
not necessarily equal to \(P^{*}\).
We propose to build approximations of elements of \(\mathcal{A}_{P}\)
(resp.\ \(\mathcal{B}_{P}\))
by constructing a finite set of sampling nodes
\(\{\mathbf{y}_{m}\}_{m}\) in the cylinder \(Y\) according to
the distribution advocated in\ \cite[Sec.~2.1]{Hampton2015},
\cite[Sec.~2.2]{Cohen2017} and \cite[Sec.~2]{Migliorati2022}.
Despite having an unbounded parametric domain \(Y\), the finite
integrability of the weight function \(w^{2}\) allows to sample \(Y\) on a
bounded region only.
The associated set of sampling functionals \(\{K_{\mathbf{y}_{m}}\}_{m}\)
(up to some normalization factor)
is expected to provide a good approximation of \(v_{P}\).
The approximation set for \(u_{P}\) will then be given by the EPWs
\(\{\mathrm{EW}_{\mathbf{y}_{m}}\}_{m}\) (up to some normalization factor).

We denote the dimension of both spaces \(\mathcal{A}_{P}\) and
\(\mathcal{B}_{P}\) by
\begin{equation}
    N_{P} :=
    \dim \mathcal{B}_{P} = \dim \mathcal{A}_{P}
    = 2P+1.
\end{equation}
The probability density function \(\rho_{P}\) is defined (up to normalization)
as the reciprocal of the \(N_{P}\)-term \emph{Christoffel function}
\(\mu_{P}\) in the spirit of\ \cite[Eq.~(2.6)]{Cohen2017}:
\begin{equation}\label{eq:density-function}
    \rho_{P} := \frac{w^2}{N_{P} \mu_{P}},
    \quad\text{where}\quad
    \mu_{P}(\mathbf{y}) := \Big(
        \sum_{|p| \leq P} |a_{p}(\mathbf{y})|^{2}
    \Big)^{-1},
    \qquad \forall\mathbf{y}=(\zeta,\varphi)\in Y.
\end{equation}
Observe that \(\rho_{P}\) and \(\mu_{P}\) are well-defined since
\(0 < \mu_{P} \leq \mu_{0} < \infty\) from the fact that \(a_{0}\)
is just a non-vanishing constant.
The density function \(\rho_{P}\) is a univariate
function on $Y$ since it is independent of the angle \(\varphi\).
We point out that \(1/\mu_{P}\) corresponds to the truncated series expansion
of the diagonal of the reproducing kernel \(K\), which amounts to taking
\(\mathbf{z}=\mathbf{y}\) and truncating at \(P\) the series
in~\eqref{eq:kernel_expansion}.

The numerical recipe consists, for each \(P \in \mathbb{N}\),
in generating a sequence of sampling node sets in the parametric domain \(Y\)
\begin{equation}\label{eq:sampling-sets}
    \mathbb{Y}_{P} := \{\mathbb{Y}_{P,M}\}_{M \in \mathbb{N}},
    \qquad\text{where}\qquad
    \mathbb{Y}_{P,M} := \{\mathbf{y}_{m}\}_{m=1}^{M},
    \quad\forall M \in \mathbb{N},
\end{equation}
using one's preferred sampling strategy such that \(|\mathbb{Y}_{P,M}| = M\)
for all \(M \in \mathbb{N}\) and the sequence \(\mathbb{Y}_{P}\) converges (in
a suitable sense) to the density \(\rho_{P}\) defined
in~\eqref{eq:density-function} as \(M\) tends to infinity.
The sampling method could be a deterministic, a random or even a quasi-random
strategy, see Section~\ref{sec:numerics}.
The sets are not assumed to be nested.

This choice of EPW parameters
is a major difference from the heuristic choice described in~\cite[Eq.~(5)]{Huybrechs2019} 
where the parameters are chosen in order to approximate solutions
defined in a rectangle containing the physical domain of interest (\(B_{1}\) in
our case).

\subsection{Evanescent plane wave approximation sets}\label{sec:conj-stability}

From the sampling node sets~\eqref{eq:sampling-sets} we can construct two
approximations sets:
one set of sampling functionals in \(\mathcal{A}\)
and
one set of EPWs in \(\mathcal{B}\).

\paragraph{Approximation sets in the Herglotz density space.}

Associated to the sampling node sets~\eqref{eq:sampling-sets},
we introduce a sequence of finite sets in
\(\mathcal{A}\)
\begin{equation}\label{eq:Aspace-approx-sets}
    \boldsymbol{\Psi}_{P} := \{\boldsymbol{\Psi}_{P,M}\}_{M \in \mathbb{N}}
    \quad\text{where}\quad
    \boldsymbol{\Psi}_{P,M} := \left\{
        \sqrt{\frac{\mu_{P}(\mathbf{y}_{m})}{M}}
        K_{\mathbf{y}_{m}},
    \right\}_{\mathbf{y}_{m} \in \mathbb{Y}_{P,M}}
    \quad \forall M \in \mathbb{N}.
\end{equation}
The normalization of \(K_{\mathbf{y}_{m}}\) in~\eqref{eq:Aspace-approx-sets} is
crucial for the stable approximation property~\eqref{eq:stability}.
In the approximation sets, each sampling functional \(K_{\mathbf{y}_{m}}\)
has been normalized by the real constant
\(\sqrt{\mu_{P}(\mathbf{y}_{m})/M}\)
which is (numerically) close to
\(\|K_{\mathbf{y}_{m}}\|_{\mathcal{A}}^{-1}/\sqrt{M}\).
More precisely, we have
\begin{equation}
    {\sqrt{\mu_{P}(\mathbf{y})}} \; {\|K_{\mathbf{y}}\|_{\mathcal{A}}}
    = \Big(
        \sum_{|p| \leq P} |a_{p}(\mathbf{y})|^{2}
    \Big)^{-1/2}
    \Big(
        \sum_{p \in \mathbb{Z}} |a_{p}(\mathbf{y})|^{2}
    \Big)^{1/2}
    \geq 1
    \qquad \forall\mathbf{y}\in Y.
\end{equation}

\paragraph{Approximation sets in the Helmholtz solution space.}

Associated to the sampling set sequences~\eqref{eq:sampling-sets} and
approximation set sequences~\eqref{eq:Aspace-approx-sets} in \(\mathcal{A}\),
we define the sequence of approximation sets of (normalized) EPWs in
\(\mathcal{B}\) as follows
\begin{equation}\label{eq:evanescent-pw-sets}
    \boldsymbol{\Phi} := \{\boldsymbol{\Phi}_{P,M}\}_{P \in \mathbb{N}, M \in \mathbb{N}},
    \qquad
    \boldsymbol{\Phi}_{P,M} :=
    \left\{
        \sqrt{\frac{\mu_{P}(\mathbf{y}_{m})}{M}}
        \mathrm{EW}_{\mathbf{y}_{m}}
    \right\}_{\mathbf{y}_{m} \in \mathbb{Y}_{P,M}}
    \quad\forall P \in \mathbb{N}, M \in \mathbb{N}.
\end{equation}
Following Corollary~\ref{cor:relation_between_K_and_epw}, 
the sequence of sets~\eqref{eq:evanescent-pw-sets}
is the image of the sequence of sets~\eqref{eq:Aspace-approx-sets}
by the Herglotz transform operator \(T\).

\paragraph{Discussion on the parameters.}

Our numerical recipe for building the approximation sets
\(\boldsymbol{\Phi}_{P,M}\) is based on only two parameters, \(P\) and \(M\),
whose tuning is intuitive:
\begin{enumerate}
    \item The first one is the Fourier truncation parameter \(P\).
        Increasing \(P\) will improve the accuracy of the approximation of
        \(u\) (resp.\ \(v = T^{-1}u\)) by \(u_{P} = \Pi_{P}u\)
        (resp.\ \(v_P = T^{-1}u_{P}\)).
        The appropriate value for \(P \geq P^{*}\) will solely depend on
        the decay of the coefficients in the modal expansion, which is
        intimately linked to the regularity of the Helmholtz solution.
    \item The second one is the dimension \(M\) of the EPW
        approximation space, which is also the number of sampling points in the
        parameter cylinder \(Y\).
        For a fixed \(P\), increasing \(M\) should allow to control
        the accuracy of the approximation of
        \(u_{P}\) (resp.\ \(v_{P} = T^{-1}u_{P}\))
        by \(\mathcal{T}_{\boldsymbol{\Phi}_{P,M}}{\boldsymbol{\xi}}\)
        (resp.\ \(\mathcal{T}_{\boldsymbol{\Psi}_{P,M}}{\boldsymbol{\xi}}\))
        for some bounded coefficients \(\boldsymbol{\xi} \in \mathbb{C}^{M}\).
        The numerical results presented below corroborate this conjecture and
        show experimentally that \(M\) should scale linearly with \(P\), with a
        moderate proportionality constant (see
        Section~\ref{sec:numerics-quasi-optimality}).
\end{enumerate}
For a fixed DOF budget \(M\), the numerical experiments in
Section~\ref{sec:numerics-quasi-optimality} suggests that using 
a Fourier truncation parameter
\(P = \max\left(\lceil\kappa\rceil, \lfloor M/4\rfloor\right)\)
gives accurate and reliable approximations.

Once the approximation sets \(\boldsymbol{\Phi}_{P,M}\)
are chosen, our concrete implementation (see
Section~\ref{sec:regularization}) to compute a particular set of coefficients
\(\boldsymbol{\xi}_{S,\epsilon}\) includes two additional parameters,
\(S\) and \(\epsilon\):
\begin{enumerate}
    \item The first parameter \(S\) is the number of sampling points on the
        boundary of the physical domain \(B_1\).
        According to~\eqref{eq:proof-approx-parseval-frame}
        and following\ \cite{Adcock2019,Adcock2020}, sufficient oversampling
        should be used.
        In practice, we chose for simplicity an oversampling ratio of \(2\),
        namely \(S = 2M\).
        This amount of oversampling may not be necessary and further
        numerical experiments could investigate a reduction of the oversampling
        ratio \(S/M\) to reduce the computational cost.
    \item The second parameter \(\epsilon\) is the regularization parameter,
        i.e.~the truncation threshold of the singular values.
        We set this parameter to \(\epsilon = 10^{-14}\) in
        the numerical experiments presented below.
        If one is interested in less accurate approximations than ours,
        this parameter could be set to larger values.
\end{enumerate}
We stress that the construction of the approximation sets
\(\boldsymbol{\Phi}_{P,M}\), together with their accuracy and stability, are
not influenced by the choice of the
reconstruction strategy made in Section~\ref{sec:Sampling}.
Although we focus on the simple method of boundary sampling together
with regularized SVD, alternative reconstruction strategies (such as sampling
in the bulk of the domain or taking inner product with elements of other types
of test spaces, for instance) and other regularization techniques (such as
Tikhonov regularization) can also be successfully used in practice.
Irrespective of the strategy, sufficient oversampling and regularization
need to be used.

\paragraph{Relation with the literature.}

As we have already alluded to, our construction is based on similar ideas
that pre-exist in the literature but in a different context.
Indeed, sampling node sets similar to the ones we propose
here can be found in\ \cite{Hampton2015,Cohen2017,Migliorati2022}.
The context of these works is the reconstruction of elements of
finite-dimensional subspaces (with explicit orthonormal basis) in weighted
\(L^2\) spaces from sampling\ \cite{Cohen2017} and it was subsequently used to
construct random cubature rules\ \cite{Migliorati2022}.
The underlying idea is that the information gathered from sampling at these
nodes is enough to allow accurate reconstruction as an expansion in the
(truncated) orthonormal basis.

Translated into our setting, the results available in the literature
say that to reconstruct an element \(v_{P} = \Pi_{P} v\)
of the finite dimensional subspace \(\mathcal{A}_{P}\),
it is enough to sample at the nodes \(\boldsymbol{\Psi}_{P,M}\) for
some sufficiently large \(M\).
In contrast, the numerical recipe described above seeks to construct an
approximation of the element \(v_{P} = \Pi_{P} v\in\mathcal{A}_{P}\) as an
expansion in the set of evaluation functionals \(\boldsymbol{\Psi}_{P,M}\) for
some sufficiently large \(M\).
In other words, the approximation we are looking for belongs to the span of the
evaluation functionals, \(\operatorname{span}\boldsymbol{\Psi}_{P,M}\),
which has trivial intersection with \(\mathcal{A}_{P}\).
By Corollary~\ref{cor:relation_between_K_and_epw}, applying the Herglotz
transform \(T\) to this approximation in
\(\operatorname{span}\boldsymbol{\Psi}_{P,M}\)
yields an element in \(\operatorname{span}\boldsymbol{\Phi}_{P,M}\)
(i.e.\ a finite superposition of EPWs)
that approximates
\(u_{P} = Tv_{P} \in\mathcal{B}_{P}\).

Unfortunately, besides the links with these works, we are not yet able to prove a
rigorous theoretical analysis to support our numerical recipe.
Yet, extensive numerical experiments in Section~\ref{sec:numerics} illustrate
the excellent approximation and stability properties of the sets
\(\boldsymbol{\Phi}_{P,M}\).

\subsection{A conjectural stable approximation result}\label{sec:conj}

We formalize below our speculations, which are hinted by the numerical
experiments given in the next section.
First, we state our main conjecture.

\begin{conjecture}\label{conj:approx-conjecture}
    The sequence of approximation sets \(\boldsymbol{\Psi}_{P}\)
    defined in~\eqref{eq:Aspace-approx-sets}
    is a stable approximation for \(\mathcal{A}_{P}\),
    in the following sense:
    there exist \(s\geq 0\) and \(C > 0\) such that, for all
    \(P\in\mathbb{N}\), there exists \(M^{*} = M(P, \eta)\) such that
    \begin{equation}\label{eq:conjecture}
        \forall v_{P} \in \mathcal{A}_{P},\ 
        \exists M \in \mathbb{N},\ 
        \boldsymbol{\mu} \in \mathbb{C}^{M},
        \quad
        \|v_{P} - \mathcal{T}_{\boldsymbol{\Psi}_{P,M}}{\boldsymbol{\mu}}\|_{\mathcal{A}}
        \leq \eta \|v_P\|_{\mathcal A}
        \ \ \text{and}\ \ 
        \|\boldsymbol{\mu}\|_{\ell^{2}} \leq C M^{s} \|v_P\|_{\mathcal A}.
    \end{equation}    
\end{conjecture}

In the following we assume for simplicity that all \(M \geq M^{*}\) satisfy the
two inequalities appearing in~\eqref{eq:conjecture} (otherwise the proofs can be
easily adapted).
This holds true if the sets are hierarchical, for instance, but this is not
necessary.

Provided the above conjecture holds, the stability of the approximation sets of
EPWs constructed above would follow as we prove next.

\begin{proposition}\label{prop:stability-epw}
    Let \(\delta > 0\).
    If Conjecture~\ref{conj:approx-conjecture} holds
    then the sequence of approximation sets~\eqref{eq:evanescent-pw-sets}
    provides a stable approximation for \(\mathcal{B}\).
    In particular,
    if $\kappa^2$ is not a Dirichlet eigenvalue on $B_1$,
    \begin{equation}
        \forall u \in \mathcal{B} \cap C^{0}(\overline{B_{1}}),\ 
        \exists P \in \mathbb{N},\ M \in \mathbb{N},\ 
        S \in \mathbb{N},\ \epsilon \in (0,1],
        \qquad
        \|u - \mathcal{T}_{\boldsymbol{\Phi}_{P,M}}\boldsymbol{\xi}_{S,\epsilon}\|_{L^{2}(B_{1})}
        \leq \delta
        \|u\|_{\mathcal{B}},
    \end{equation}
    where
    \(\boldsymbol{\xi}_{S,\epsilon} \in \mathbb{C}^{|\boldsymbol{\Phi}_{P,M}|}\)
    is computed with the regularization procedure
    in~\eqref{eq:solution_SVDr}.
    The SVD regularization parameter \(\epsilon\) can be chosen
    as~\eqref{eq:epsilon-estimate}.
\end{proposition}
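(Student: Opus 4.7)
The plan is to transfer the conjectured stability on the parametric side to the Helmholtz side via the Herglotz transform $T$, combine it with the Fourier truncation bound~\eqref{eq:Fourier-truncation-vP}, and then feed the resulting stable approximation sequence into Corollary~\ref{cor:approx-error-estimate}. The key identity that makes this pass through cleanly is Corollary~\ref{cor:relation_between_K_and_epw}: since $T K_{\mathbf{y}} = \phi_{\mathbf{y}}$ and the normalization factor $\sqrt{\mu_N(\mathbf{y}_{P,M,m})/M}$ is the same in \eqref{eq:Aspace-approx-sets} and \eqref{eq:evanescent-pw-sets}, one has $T \circ \mathcal{T}_{\boldsymbol{\Psi}_{P,M}} = \mathcal{T}_{\boldsymbol{\Phi}_{P,M}}$ as operators from $\mathbb{C}^M$ into $\mathcal{B}$.

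First, fix $u \in \mathcal{B}\cap C^0(\overline{B_1})$ and $\eta > 0$, and let $P := P^*(u, \eta)$ from~\eqref{eq:def-Pstar}, so that $\|u - u_P\|_{\mathcal{B}} \leq \eta\|u\|_{\mathcal{B}}$. Setting $v := T^{-1}u$ and $v_P := T^{-1}u_P = \Pi_P v$, Theorem~\ref{th:T_op} gives the two-sided control $\|v - v_P\|_{\mathcal{A}} \leq \tau_-^{-1}\eta\|u\|_{\mathcal{B}}$ and $\|v_P\|_{\mathcal{A}} \leq \tau_-^{-1}\|u\|_{\mathcal{B}}$. Next, applying Conjecture~\ref{conj:approx-conjecture} to $v_P \in \mathcal{A}_P$ furnishes $M \in \mathbb{N}$ and $\boldsymbol{\mu}\in\mathbb{C}^M$ with $\|v_P - \mathcal{T}_{\boldsymbol{\Psi}_{P,M}}\boldsymbol{\mu}\|_{\mathcal{A}} \leq \eta\|v_P\|_{\mathcal{A}}$ and $\|\boldsymbol{\mu}\|_{\ell^2} \leq C^* M^{s^*}\|v_P\|_{\mathcal{A}}$, with $s^*, C^*$ independent of $P$.

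Pushing forward through $T$ and using the boundedness half of~\eqref{eq:T_op_bounds},
\begin{equation}
    \|u_P - \mathcal{T}_{\boldsymbol{\Phi}_{P,M}}\boldsymbol{\mu}\|_{\mathcal{B}}
    = \|T(v_P - \mathcal{T}_{\boldsymbol{\Psi}_{P,M}}\boldsymbol{\mu})\|_{\mathcal{B}}
    \leq \tau_+\,\eta \|v_P\|_{\mathcal{A}}
    \leq \tau_+\tau_-^{-1}\eta\|u\|_{\mathcal{B}},
\end{equation}
while the coefficient bound transfers as $\|\boldsymbol{\mu}\|_{\ell^2} \leq C^* \tau_-^{-1} M^{s^*}\|u\|_{\mathcal{B}}$. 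A triangle inequality with the truncation error yields $\|u - \mathcal{T}_{\boldsymbol{\Phi}_{P,M}}\boldsymbol{\mu}\|_{\mathcal{B}} \leq (1 + \tau_+\tau_-^{-1})\eta\|u\|_{\mathcal{B}}$. Up to rescaling $\eta$ by the constant factor $(1+\tau_+\tau_-^{-1})$, this verifies Definition~\ref{def:stability} for $\boldsymbol{\Phi}$ with stability exponent $s^*$ and stability constant $C^*\tau_-^{-1}$. At this point Corollary~\ref{cor:approx-error-estimate} applies verbatim (the Dirichlet eigenvalue assumption is in force, and $\phi_{\mathbf{y}} \in C^0(\overline{B_1})$ for every $\mathbf{y}\in Y$), delivering the claimed $L^2(B_1)$ error bound together with the explicit choice~\eqref{eq:epsilon-estimate} of $\epsilon_0$.

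The only genuinely delicate point is the identification $T\circ \mathcal{T}_{\boldsymbol{\Psi}_{P,M}} = \mathcal{T}_{\boldsymbol{\Phi}_{P,M}}$: one must check that the scalar normalization $\sqrt{\mu_N(\mathbf{y}_{P,M,m})/M}$ commutes with $T$, which is immediate from its linearity and from $T K_{\mathbf{y}} = \phi_{\mathbf{y}}$. Everything else is bookkeeping with the constants $\tau_\pm$ from Lemma~\ref{lem:tau_bounds}, plus the observation that the uniform-in-$P$ nature of $s^*$ and $C^*$ in the conjecture is precisely what ensures the constants entering Definition~\ref{def:stability} and~\eqref{eq:epsilon-estimate} do not degenerate as the target regularity (and hence $P$) varies.
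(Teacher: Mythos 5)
Your proposal is correct and follows essentially the same route as the paper's own proof: truncate via $P^*$, apply the conjecture to $v_P$ in $\mathcal{A}_P$, push the error and coefficient bounds through $T$ using $\tau_\pm$ from Theorem~\ref{th:T_op}, combine with the triangle inequality and a rescaling of $\eta$, and invoke Corollary~\ref{cor:approx-error-estimate}. Your explicit remark that $T\circ\mathcal{T}_{\boldsymbol{\Psi}_{P,M}}=\mathcal{T}_{\boldsymbol{\Phi}_{P,M}}$ (via Corollary~\ref{cor:relation_between_K_and_epw} and the matching normalizations) is a point the paper leaves implicit, but it is the same argument.
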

\begin{proof}
    We need to prove the stability of the sequence of approximation sets,
    namely that for any \(\tilde\eta>0\), there exists \(\tilde{s} \geq 0\) and
    \(\tilde{C} > 0\)
    such that
    \begin{equation}\label{eq:proof-stability}
        \forall u \in \mathcal{B},\ 
        \exists P \in \mathbb{N},\ 
        M \in \mathbb{N},\ 
        \boldsymbol{\mu} \in \mathbb{C}^{M},
        \quad 
        \|u - \mathcal{T}_{\boldsymbol{\Phi}_{P,M}}{\boldsymbol{\mu}}\|_{\mathcal{B}}
        \leq \tilde\eta\|u\|_{\mathcal B}
        \ \ \text{and} \ \ 
        \|\boldsymbol{\mu}\|_{\ell^{2}} \leq \tilde{C} M^{\tilde{s}}\|u\|_{\mathcal B}.
    \end{equation}    
    Provided this holds, the claimed result is a direct application of
    Corollary~\ref{cor:approx-error-estimate}.
    
    Let \(\eta>0\),
    \(u \in \mathcal{B}\) and set \(v := T^{-1}u \in \mathcal{A}\).
    For any \(P \geq P^{*}=P^{*}(u,\eta)\)
    with \(P^{*}\) defined in~\eqref{eq:def-Pstar}, 
    if we let \(u_{P} := \Pi_{P}u\) and \(v_{P} := T^{-1}u_{P}\) we have
    (recall~\eqref{eq:Fourier-truncation-vP})
    \begin{equation}
        \|u - u_{P}\|_{\mathcal{B}} \leq \eta \|u\|_{\mathcal B},
        \qquad\text{and}\qquad
        \|v - v_{P}\|_{\mathcal{A}} \leq \tau_{-}^{-1} \eta \|u\|_{\mathcal B}.
    \end{equation}
    Assuming that Conjecture~\ref{conj:approx-conjecture} holds,
    there exist \(s\) and \(C\) (both independent of \(P\))
    such that, for any \(M \geq M^{*}(P^{*},\eta)\), 
    there exists a set of coefficients \(\boldsymbol{\mu} \in \mathbb{C}^{M}\)
    such that
    \begin{equation}
        \|v_{P} - \mathcal{T}_{\boldsymbol{\Psi}_{P,M}}\boldsymbol{\mu}\|_{\mathcal{A}}
        \leq \eta \|v_P\|_{\mathcal A},
        \qquad\text{and}\qquad 
        \|\boldsymbol{\mu}\|_{\ell^{2}} \leq C M^{s} \|v_P\|_{\mathcal A}.
    \end{equation}
    The properties of the isomorphism \(T\) given in Theorem~\ref{th:T_op}
    imply that
    \begin{equation}
        \|u_{P} - \mathcal{T}_{\boldsymbol{\Phi}_{P,M}}\boldsymbol{\mu}\|_{\mathcal{B}}
         < \tau_{+} \eta \|v_P\|_{\mathcal A}
        \qquad \text{and}\qquad
        \|v_P\|_{\mathcal A}
        \le \tau_-^{-1}\|u_P\|_{\mathcal B}
        \le \tau_-^{-1}\|u\|_{\mathcal B}.
    \end{equation}
    For any \(P \geq P^{*}(u,\eta)\) and \(M \geq M^{*}(P^{*},\eta)\),
    the total approximation error 
    for the Herglotz density \(v\) can be estimated as
    \begin{equation}\label{eq:error-estimate-v}
        \|v - \mathcal{T}_{\boldsymbol{\Psi}_{P,M}}\boldsymbol{\mu}\|_{\mathcal{A}}
        \leq \|v - v_{P}\|_{\mathcal{A}} +
        \|v_{P} - \mathcal{T}_{\boldsymbol{\Psi}_{P,M}}\boldsymbol{\mu}\|_{\mathcal{A}}
        \leq 2\tau_-^{-1} \eta \|u\|_{\mathcal B},
    \end{equation}
    and for the Helmholtz solution \(u\) as
    \begin{equation}\label{eq:error-estimate-u}
    \begin{aligned}
        \|u - \mathcal{T}_{\boldsymbol{\Phi}_{P,M}}\boldsymbol{\mu}\|_{\mathcal{B}} 
       &\leq \|u - u_{P}\|_{\mathcal{B}} +
        \|u_{P} - \mathcal{T}_{\boldsymbol{\Phi}_{P,M}}\boldsymbol{\mu}\|_{\mathcal{B}}\\
       &\leq \left(1 + \tau_{+}\tau_-^{-1}\right) \eta \|u\|_{\mathcal B},
    \end{aligned}
   \qquad\text{and}\qquad
        \|\boldsymbol{\mu}\|_{\ell^{2}} \leq C M^{s}
        \tau_-^{-1}\|u\|_{\mathcal B}.
    \end{equation}
    Choosing \(\eta=\tilde\eta/(1+\tau_+\tau_-^{-1})\), we can conclude 
    since~\eqref{eq:error-estimate-u} is~\eqref{eq:proof-stability} with
    $\tilde{s}=s$ and $\tilde{C}=C\tau_-^{-1}$.
\end{proof}

\section{Numerical results}\label{sec:numerics}

We provide numerical evidence that the procedure described above
allows to compute controllably accurate approximations of Helmholtz solutions
in the unit disk and in other domains\footnote{The \textsc{Julia} code used to generate the
numerical results of this paper is available at\\
\href
{https://github.com/EmileParolin/evanescent-plane-wave-approx}
{https://github.com/EmileParolin/evanescent-plane-wave-approx}}.

\subsection{Probability densities and samples}

\paragraph{Probability density and cumulative distributions functions.}

We represent the probability density function \(\rho_{P}\)
(see~\eqref{eq:density-function})
as a function of the evanescence parameter \(\zeta\)
on the left in Figure~\ref{fig:sampling_density}.
Here \(P\) denotes the truncation parameter, meaning that the sampling is
performed to approximate elements of \(\mathcal{A}_{P}\),
which has dimension \(N_{P}\).
The associated cumulative distribution function with respect to the
evanescence parameter \(\zeta\) is defined as
\begin{equation}
    \Upsilon_{P}(\zeta) := \int_{-\infty}^{\zeta}
    \rho_{P}(\tilde{\zeta})
    \; \mathrm{d}\tilde{\zeta},
    \qquad
    \forall \zeta \in \mathbb{R}.
\end{equation}
It is represented in the right of Figure~\ref{fig:sampling_density}.
Recall that while \(\rho_{P}\) is a bi-variate function on the cylinder $Y$, it
is constant with respect to the angle \(\varphi\).
As a result, the cumulative distribution with respect to this variable
\(\varphi\) is a linear function.
This is why we represent these two functions \(\rho_{P}\) and
\(\Upsilon_{P}\) only with respect to the evanescence parameter \(\zeta\).

\begin{figure}
    \centering
    \includegraphics[width=0.54\textwidth]{./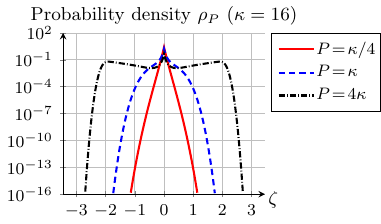}\hspace{0.01\textwidth}
    \includegraphics[width=0.41\textwidth]{./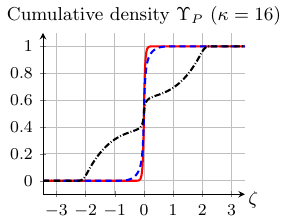}\vspace{0.02\textwidth}
    \caption{Sampling density functions \(\rho_{P}\) (left) and
    \(\Upsilon_{P}\) (right)
    with respect to the evanescence parameter \(\zeta\)
    constructed for the subspace \(\mathcal{A}_{P}\).
    Wavenumber \(\kappa=16\).
    }\label{fig:sampling_density}
\end{figure}

We observe that the probability density \(\rho_{P}\) is a symmetric even
function and exhibits a main mode at \(\zeta=0\) which corresponds to purely
PPWs.
Moreover, the \(\epsilon\)-support of this density is rather tight and the
probability eventually tends to zero exponentially as \(|\zeta|\) gets large
enough.
When \(P \leq \kappa\) the density is a unimodal distribution
whereas for \(P \gg \kappa\) (e.g.~$P=4\kappa$) the density is a multimodal
distribution.
Indeed, in the latter case, there are two symmetric modes for relatively large
evanescence parameter, in addition to the main mode at \(\zeta=0\).
The cumulative distribution function \(\Upsilon_{P}\) is close to a step
function in the case where \(\mathcal{A}_{P}\) contains only elements
associated to the propagative regime \(P \leq \kappa\).
In contrast, for \(P > \kappa\) the distribution is non-trivial
for moderate values of the evanescence parameter \(\zeta\).
This means that for \(P \leq \kappa\) one can safely choose only PPWs, while
for \(P>\kappa\) EPWs are needed and their choice is non-trivial.

\paragraph{Parameter sampling.}

For any \(P\) we generate \(M = \nu N_{P}\) samples in the cylinder \(Y\) using
the technique called
\emph{Inversion Transform Sampling} (ITS)\ \cite[Sec.~5.2]{Cohen2017}.
It consists in first generating sampling sets
in the unit square \([0,1]^{2}\) that converge (in a suitable sense) to the
uniform distribution \(\mathcal{U}_{[0,1]^{2}}\) when \(M\to\infty\),
\begin{equation}
    \{\mathbf{z}_{m}\}_{m},
    \qquad\text{with}\qquad
    \mathbf{z}_{m} =
    (z_{m,\varphi}, z_{m,\zeta}) \in [0,1]^{2}, \quad m = 1,\ldots,M,
\end{equation}
and then map back to the cylinder \(Y\), to obtain
sampling sets that converge
to the probability density function \(\rho_{P}\) when \(M\to\infty\), namely
\begin{equation}\label{eq:ymSamples}
    \{\mathbf{y}_{m}\}_{m},
    \qquad\text{with}\qquad
    \mathbf{y}_{m} :=
    \left(2\pi z_{m,\varphi} ,\Upsilon_{P}^{-1}(z_{m,\zeta})\right)
    \in Y, \quad m = 1,\ldots,M.
\end{equation}
The fact that the density function is constant with respect to
\(\varphi\) considerably simplifies the generation of the samples. 
The inversion \(\Upsilon_{P}^{-1}\) can be performed using elementary
root-finding techniques,
our implementation resorts to the bisection method. 

In our numerical experiments we tested three types of sampling methods, which
differ by how we generate the first sampling distribution
\(\{\mathbf{z}_{m}\}_{m}\) in the unit square:
\begin{enumerate}
    \item \emph{deterministic} sampling: the initial samples in the unit square
        are a Cartesian product of two sets of equispaced points with the same
        number of points in both directions (all numerical results
        presented are obtained by using as approximation set dimension
        the smallest square integer larger than or equal to \(M\));
    \item \emph{Sobol} sampling: the initial samples in the unit square
        corresponds to Sobol sequences which are quasi-random low-discrepancy
        sequences\footnote{We used the \textsc{Julia} packages
        \texttt{Sobol.jl} and \texttt{QuasiMonteCarlo.jl},
        which are themselves based on\ \cite{Bratley1988,Joe2003}.};
    \item \emph{random} sampling: the initial samples in the unit square are
        drawn randomly according to the product of two uniform distributions
        \(\mathcal{U}_{[0,1]}\).
\end{enumerate}

Some examples of sampling sets corresponding to the probability density function
\(\rho_{P}\) for \(\kappa=16\) are reported in
Figure~\ref{fig:sampling_samples}.
For these examples the number of sampling nodes is set to \(M = \nu N_{P}\) with
\(\nu=4\), for the three types of sampling considered.
As expected, the sampling points cluster near the line \(\zeta=0\) for smaller
$P$.
This is the (propagative) regime for which PPWs alone provide a good
approximation.
When \(P>\kappa\) the evanescence parameter \(\zeta\) spreads in a wider
domain, with some clustering at the secondary modes of the distribution, in
agreement with Figure~\ref{fig:sampling_density}.

\begin{figure}
    \centering
    \raisebox{-.5\height}{\includegraphics[width=0.3\textwidth]{./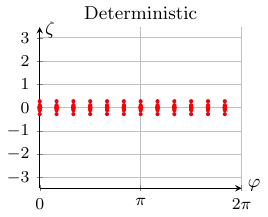}}
    \raisebox{-.5\height}{\includegraphics[width=0.3\textwidth]{./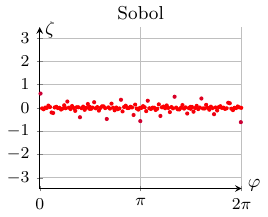}}
    \raisebox{-.5\height}{\includegraphics[width=0.3\textwidth]{./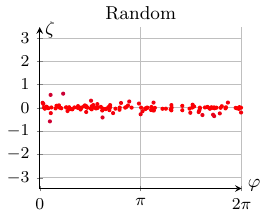}}
    \(P=\kappa\ \)
    \raisebox{-.5\height}{\includegraphics[width=0.3\textwidth]{./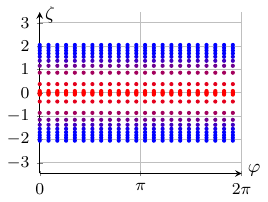}}
    \raisebox{-.5\height}{\includegraphics[width=0.3\textwidth]{./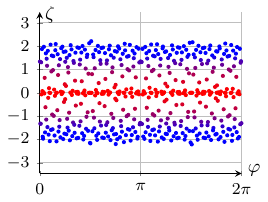}}
    \raisebox{-.5\height}{\includegraphics[width=0.3\textwidth]{./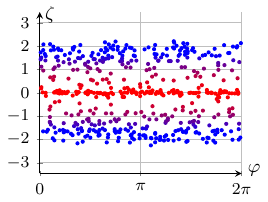}}
    \(P=4\kappa\)
    \caption{\(M = 4 N_{P}\) samples in the cylinder \(Y\)
    for \(P=\kappa\) (top) and \(P=4\kappa\) (bottom) and 
    various types of sampling method (left to right).
    Wavenumber \(\kappa=16\).
    Large \(|\zeta|\) implies fast EPW decay.
    }\label{fig:sampling_samples}
\end{figure}

\subsection{Propagative plane waves are unstable}\label{sec:PWnumerics}

Before presenting EPW approximations, we report some numerical experiments
dedicated to verifying numerically the instability result of
Lemma~\ref{lem:instability-propagative} when using PPWs.
These will also serve as a reference point to compare with the results obtained
using our EPW recipe.

Let us consider the approximation problem of
Section~\ref{sec:instability}, namely the approximation of the circular wave
\(b_{p}\) for some \(p\in\mathbb{Z}\) by an approximation set
\(\boldsymbol{\Phi}_{M}\) of \(M \in \mathbb{N}\) PPWs
defined in~\eqref{eq:propagative-pw-sets}.
The sampling matrix \(A\) was defined in~\eqref{eq:def_matrix_rhs}, using \(M\)
PPWs with equispaced angles and \(S:=\max(2M, 2|p|)\)
sampling points (we impose \(S \geq 2|p|\) to avoid spurious results due to
aliasing).
The entries of the matrix $A$ are immediately computed as
$A_{s,m}=e^{\imath\kappa\cos(2\pi(\frac sS-\frac mM))}$ for $s=1,\ldots,S$,
$m=1,\ldots M$.
The right-hand side \(\mathbf{b}\) is defined as in~\eqref{eq:def_matrix_rhs}
for \(b_{p}\) in place of \(u\); we recall that we use Dirichlet data in all
our numerical experiments.

The matrix \(A\) is notoriously ill-conditioned (see
Figure~\ref{fig:propagative_instability_k16_svs_scatter}):
its condition number grows exponentially
with respect to the number of plane waves \(M\) in the
approximation set \(\boldsymbol{\Phi}_{M}\).
This is well-known, see for instance the numerical experiments
in\ \cite[Sec.~2.3]{PerreyDebain2006} for the circular geometry and
\(S=M\).
This is not a feature of the sampling method: we refer to similar
experiments in\ \cite[Sec.~4.3]{Hiptmair2016} for the mass matrix of a
Galerkin formulation in a Cartesian geometry, again for \(S=M\).
The least-squares formulation suffers from an even worse
condition number: proportional to the square of the condition number of the
sampling method, see e.g.\ \cite[Eq.~(30)]{PerreyDebain2006}.
We apply the regularization procedure described in
Section~\ref{sec:regularization}
with threshold parameter \(\epsilon = 10^{-14}\).

The numerical results are reported in
Figure~\ref{fig:propagative_instability_k16}.
On the left panel we report the relative residual \(\mathcal{E}\)
defined in~\eqref{eq:relative-error} as a measure of the accuracy of the
approximation.
On the right panel we report the size of the coefficients
\(\|\boldsymbol{\xi}_{S,\epsilon}\|_{\ell^2}\) as a measure of the stability of
the approximation.
Relative residuals and coefficient norms were already used
in\ \cite{Huybrechs2019} to assess the stability of the approximations.

We observe three regimes.
First, for the propagative modes, i.e.\ the circular waves
with mode number \(|p| \leq \kappa\), the approximation is accurate ($\mathcal
E<10^{-13}$) and the size of the coefficients is moderate
($\|\boldsymbol{\xi}_{S,\epsilon}\|<10$).
Second, for mode numbers \(|p|\) roughly larger than the wavenumber \(\kappa\),
the norms of the coefficients of the computed approximations blow up
exponentially. The accuracy is spoiled proportionally.
Third, for evanescent modes with $|p|$ larger than about $2\kappa$ or
$3\kappa$, the size of the coefficients
completely destroys the stability of the approximation, and we cannot
approximate the target $b_p$ with any decent accuracy.
Of course, for a relative error at \(\mathcal{O}(1)\), the coefficient norm
reported is not meaningful, and taking \(\boldsymbol{\xi}_{S,\epsilon}\)
identically zero would provide a similar error.

Increasing the number of plane waves \(M\) has no effect on the accuracy beyond
a certain point.
Indeed, Figure~\ref{fig:propagative_instability_k16_svs_scatter}
shows that the 
\(\epsilon\)-rank (i.e.~the number of singular values larger than \(\epsilon\))
of the matrix \(A\) does not increase when \(M\) is raised.
Although increasing \(M\) does not bring any higher accuracy, it does not
increase any further the numerical instability.
For a fixed \(M\), the same matrix \(A\) is used to approximate all the
\(b_{p}\)'s for any mode number \(p\) (i.e.~to compute all markers of the
same color in Figure~\ref{fig:propagative_instability_k16}).
Even when the matrix \(A\) is extremely ill-conditioned (say \(M=32\kappa\) in
the numerical experiments presented here), we get at the same time almost
machine-precision accuracy for all propagative modes \(|p| \leq \kappa\) while
having \(\mathcal{O}(1)\) error for evanescent modes with larger mode number
\(|p| \geq 3\kappa\).
It is the simple regularization procedure described in
Section~\ref{sec:regularization} that allows us to obtain such results.
No other technique can overcome the inherent
instability of PPWs.
In particular, even with regularization, 
accuracy in the approximation of the evanescent modes remains out of reach for
a given floating-point precision.

Analoguous numerical results are also observed in the context of the
MFS, 
see\ \cite[Fig.~3]{Barnett2008}.

\begin{figure}
    \centering
    \begin{subfigure}{0.14\textwidth}
        \subcaption{PPW}\label{fig:propagative_instability_k16}
    \end{subfigure}
    \begin{subfigure}{0.85\textwidth}
        \raisebox{-.5\height}{\includegraphics[scale=1.2]{./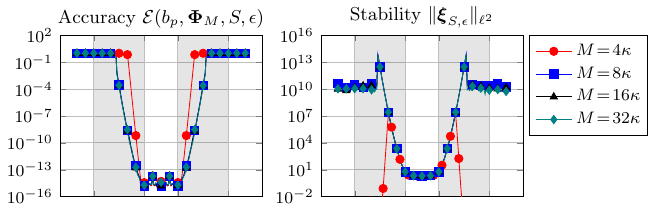}\hspace{-7mm}}
    \end{subfigure}
    \begin{subfigure}{0.14\textwidth}
        \subcaption{EPW:\\ deterministic\\ sampling}\label{fig:evanescent_stability_uniform_sampling_k16}
    \end{subfigure}
    \begin{subfigure}{0.85\textwidth}
        \raisebox{-.5\height}{\includegraphics[scale=1.2]{./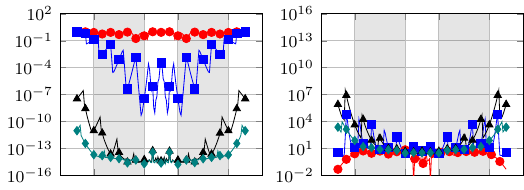}}
    \end{subfigure}
    \begin{subfigure}{0.14\textwidth}
        \subcaption{EPW:\\ Sobol\\ sampling}\label{fig:evanescent_stability_sobol_sampling_k16}
    \end{subfigure}
    \begin{subfigure}{0.85\textwidth}
        \raisebox{-.5\height}{\includegraphics[scale=1.2]{./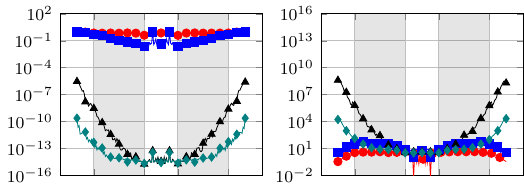}}
    \end{subfigure}
    \begin{subfigure}{0.14\textwidth}
        \subcaption{EPW:\\ random\\ sampling}\label{fig:evanescent_stability_random_sampling_k16}
    \end{subfigure}
    \begin{subfigure}{0.85\textwidth}
        \raisebox{-.5\height}{\includegraphics[scale=1.2]{./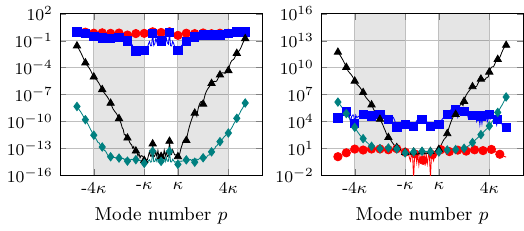}}
    \end{subfigure}
    \caption{Accuracy \(\mathcal{E}\), as defined in~\eqref{eq:relative-error},
    (left) and stability \(\|\boldsymbol{\xi}_{S,\epsilon}\|_{\ell^2}\) (right)
    of the approximation of circular waves \(b_{p}\)
    by PPW (top row) and EPWs (three bottom rows). 
    Truncation at $P=4\kappa$ for EPWs, wavenumber~\(\kappa=16\).
    With PPWs, the approximation accuracy does not improve as
    \(M\) increases beyond some value, because of exponentially large (with respect
    to \(p\)) coefficients, as proved in Lemma~\ref{lem:instability-propagative}.
    With EPWs, the approximation accuracy improves as \(M\) increases,
    thanks to a decrease of the size of the coefficients.
    }\label{fig:stability_k16}
\end{figure}

\subsection{Evanescent plane waves are stable}

We investigate, for the same test cases, whether the EPW sets proposed in
Section~\ref{sec:conj-stability} achieve better stability properties while not
compromising the accuracy of the approximation.
The approximation sets \(\boldsymbol{\Phi}_{P,M}\) are defined
in~\eqref{eq:evanescent-pw-sets} and the \(M\) EPWs have parameters
\(\{\mathbf{y}_{m}\}_{m=1}^{M}\)
computed as in~\eqref{eq:ymSamples}, i.e.\ distributed
according to the sampling distribution
\(\rho_{P}\) defined in~\eqref{eq:density-function}.
These EPWs are normalized as
in~\eqref{eq:evanescent-pw-sets}.
Here the parameter \(P\) used to generate the \(M\) samples
(which are adapted to the space \(\mathcal{A}_{P}\))
is set to \(4\kappa\).
The numerical results are reported in
Figure~\ref{fig:stability_k16}.

The main observation is that by using sufficiently many waves (i.e.~setting
\(M\) sufficiently large, on the order of \(M = 32\kappa \approx 4 N_{P}\)) we
are now able to approximate to (almost) machine precision all the modes \(|p|
\leq P = 4\kappa\).
This includes the propagative modes \(|p| \le \kappa\)
(which were already well-approximated by purely PPWs),
but more importantly, this also includes evanescent modes
\(\kappa < |p| \le P = 4\kappa\) (corresponding to the greyed out area), for
which purely PPWs failed to provide any meaningful approximation.
Moreover, even much higher modes \(|p| > P = 4\kappa\)
are approximated to acceptable accuracy.
Further, we stress that the norms of the coefficients
\(\|\boldsymbol{\xi}_{S,\epsilon}\|_{\ell^{2}}\) used in the approximate
expansions remain moderate, especially for large \(M\).
This is in stark contrast with the results of Section~\ref{sec:PWnumerics},
where the exponential growth of the coefficients prevented any accurate
numerical approximation.

\begin{figure}
    \centering
    \begin{subfigure}{0.35\textwidth}
        \centering
        \includegraphics[width=\textwidth]{./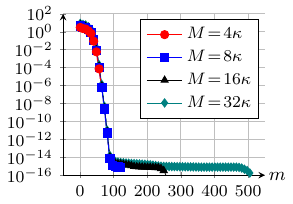}
        \caption{PPW.}\label{fig:propagative_instability_k16_svs_scatter}
    \end{subfigure}
    \begin{subfigure}{0.35\textwidth}
        \centering
        \includegraphics[width=\textwidth]{./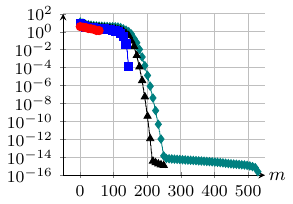}
        \caption{EPW: deterministic sampling}\label{fig:evanescent_stability_uniform_sampling_k16_svs_scatter}
    \end{subfigure}
    \begin{subfigure}{0.35\textwidth}
        \centering
        \includegraphics[width=\textwidth]{./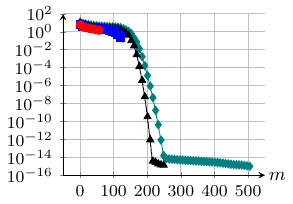}
        \caption{EPW: Sobol sampling}\label{fig:evanescent_stability_sobol_sampling_k16_svs_scatter}
    \end{subfigure}
    \begin{subfigure}{0.35\textwidth}
        \centering
        \includegraphics[width=\textwidth]{./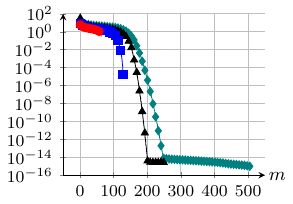}
        \caption{EPW: random sampling}\label{fig:evanescent_stability_random_sampling_k16_svs_scatter}
    \end{subfigure}
    \caption{Singular values \(\{\sigma_{m}\}_{m=1}^{M}\) of the matrix \(A\)
    when using a set of \(M\) plane waves.
    Truncation at $P=4\kappa$ for EPWs, wavenumber \(\kappa=16\).
    The matrices associated to EPWs are not better conditioned than the ones
    associated to PPWs,
    however the number of singular values above the regularization threshold
    \(\epsilon=10^{-14}\) increases with \(M\) and $P$.
    }\label{fig:k16_svs_scatter}
\end{figure}

In Figure~\ref{fig:k16_svs_scatter}, we observe that
the condition number of the matrix \(A\) is of the same order for PPWs and EPWs, when \(M\) is large enough.
The improved accuracy for evanescent modes is not due to an improved
conditioning of the underlying linear system
but to an increase of the \(\epsilon\)-rank of the matrix, i.e.\ the number of singular values
larger than \(\epsilon\). This number goes from less than \(100\) for PPWs to
around \(250\) for EPWs in the case \(M=32\kappa\).
To further increase the \(\epsilon\)-rank, one needs to increase the
truncation parameter \(P\).

Comparing PPWs and EPWs,
we see that for small \(M\) (e.g.\ \(M=4\kappa\) and \(M=8\kappa\)) purely
PPWs provide better approximation of propagative modes than EPWs.
This is because the approximation spaces made of PPWs are
tuned for propagative modes, which span a space of dimension \(2\kappa+1\).
In contrast, the approximation spaces made of EPWs target a
larger number of modes, including some evanescent modes, which span a space of
dimension \(N_{P}=2P+1\) with \(P=4\kappa\) in this numerical experiment.
For a general target solution containing evanescent modes,
one does not expect any advantage in using PPWs only.

\subsection{Approximation of random-expansion solutions}\label{sec:NumericsApproxConv}

We test the procedure described so far by reconstructing a solution of the form
\begin{equation}\label{eq:sol-surrogate}
    u
    := \sum_{|p| \leq P} \hat u_{p} \left[\max\left(1, |p|-\kappa\right)\right]^{-1/2}
    b_{p}
    \;\in \mathcal{B}_{P}
\end{equation}
in which $\hat u_p$ are
normally-dis\-trib\-uted random numbers with mean \(0\) and standard deviation \(1\).
The coefficients of any element of $\mathcal B$ decay in modulus as
$o(|p|^{-1/2})$ for $|p|\to\infty$;
this is therefore a rather difficult scenario for an approximation problem.

We then apply the procedure described above for the three types of sampling
strategies considered.
The sampling points are constructed knowing that \(T^{-1}u\) is an element of
\(\mathcal{A}_{P}\).
In other words, the optimal modal truncation parameter \(P^{*}=P\) (where
\(P\) appears in~\eqref{eq:sol-surrogate}) is assumed to be known in this
numerical experiment.
The main purpose is to investigate the validity of
Conjecture~\ref{conj:approx-conjecture}.
We study here the convergence of the error with respect to the 
dimension of the approximation space \(M\).
The number of sampling points on the boundary of the disk is set to
\(S=2M\).
The numerical results are given in Figure~\ref{fig:sol-surrogate_k16} for the
Sobol sampling strategy only.
On the left panel we report the relative residual \(\mathcal{E}\), defined
in~\eqref{eq:relative-error}, as a measure of the accuracy of the approximation.
On the right panel we report the size of the coefficients, namely
\(\|\boldsymbol{\xi}_{S,\epsilon}\|_{\ell^2} / \|u\|_{\mathcal{B}}\), as a
measure of the stability of the approximation.

\begin{figure}
    \centering
    \includegraphics[scale=1.2]{./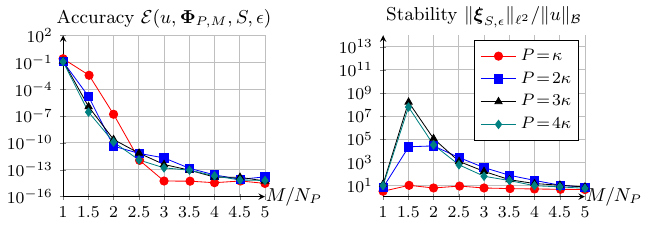}
    \caption{Accuracy \(\mathcal{E}\), as defined in~\eqref{eq:relative-error},
    (left) and stability
    \(\|\boldsymbol{\xi}_{S,\epsilon}\|_{\ell^2} / \|u\|_{\mathcal{B}}\)
    (right) of the
    approximation by \(M\) EPWs (constructed using Sobol sampling) of a solution
    \(u\) in the form~\eqref{eq:sol-surrogate} that belong to the space
    \(\mathcal{B}_{P}\) of dimension
    \(N_{P}=2P+1\).
    The horizontal axis represents the ratio \(M/N_{P}\).
    Wavenumber \(\kappa = 16\).
    The number \(M\) of EPWs necessary to approximate elements of the space
    \(\mathcal{B}_{P}\) seems to scale linearly with the space dimension
    \(N_{P}\).
    }\label{fig:sol-surrogate_k16}
\end{figure}

The main observation is that the error quickly decays with respect to the ratio
\(M/N_{P} = M/(2P+1)\), which represents the ratio of the dimension of the
approximation set \(M\) over the dimension of the space \(\mathcal{B}_{P}\) the
solution~\eqref{eq:sol-surrogate} lives in.
When \(P\) is large enough (say \(P \geq 2\kappa\) which remains moderate), the
decay is relatively independent of \(P\).
The second observation is that the norm of the coefficients
\(\|\boldsymbol{\xi}_{S,\epsilon}\|_{\ell^{2}} / \|u\|_{\mathcal{B}}\)
in the expansions is a decreasing function of the size \(M\) of the
approximation space.
We see once more that one gets accurate and stable approximations.
The values of
\(\|\boldsymbol{\xi}_{S,\epsilon}\|_{\ell^{2}} / \|u\|_{\mathcal{B}}\)
reported for small values of \(M/N_{P}\), and in particular the increase at the
start, are not significant since they correspond to inaccurate approximations.

We report in Figure~\ref{fig:bulk} the plots of
a solution~\eqref{eq:sol-surrogate} for a larger
frequency \(\kappa = 64\) and truncation parameter \(P = 3\kappa = 192\).
The approximation error when using \(M=3(2P+1)=1155\) PPWs or EPWs is also given, with points
in \(Y\) sampled as a Sobol sequence.
In the first case the absolute error in the disk is much larger, more than
\(12\) orders of magnitude larger if measured in $L^\infty(B_1)$ norm, and
concentrated near the boundary.
The number of degrees of freedom per wavelength \(\lambda={2\pi}/{\kappa}\)
used in each direction can be estimated by
\(\lambda\sqrt{{M}/{\pi}} \approx 1.9\). Note that \(\pi\) here represents the
area of the unit disk.
For low-order methods, a common rule of thumb is to
use around \(6 \sim 10\) degrees of freedom per wavelength
to have \(1\) or \(2\) digits of accuracy.
We obtain \(12\) digits of accuracy for only a fraction of this number.
For \(M=2(2P+1)=770\), the maximum absolute error reached is
measured to \(1.3 \cdot 10^{-10}\) (not plotted).

\begin{figure}[t]
    \centering
    \begin{subfigure}{0.49\textwidth}
        \centering
        \includegraphics[height=0.2\textheight]{./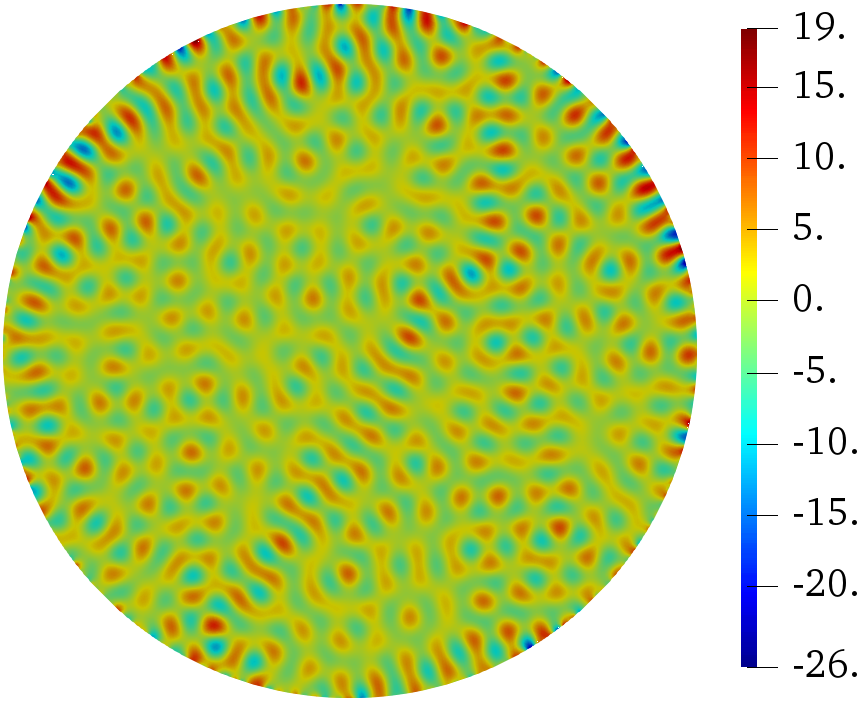}
        \caption{Real part of target solution \(\Re u\)}\label{fig:surrogate_sqrt_decay_k64_P192_M1155_real}
        \vspace{0.02\textwidth}
    \end{subfigure}
    \begin{subfigure}{0.49\textwidth}
        \centering
        \includegraphics[height=0.2\textheight]{./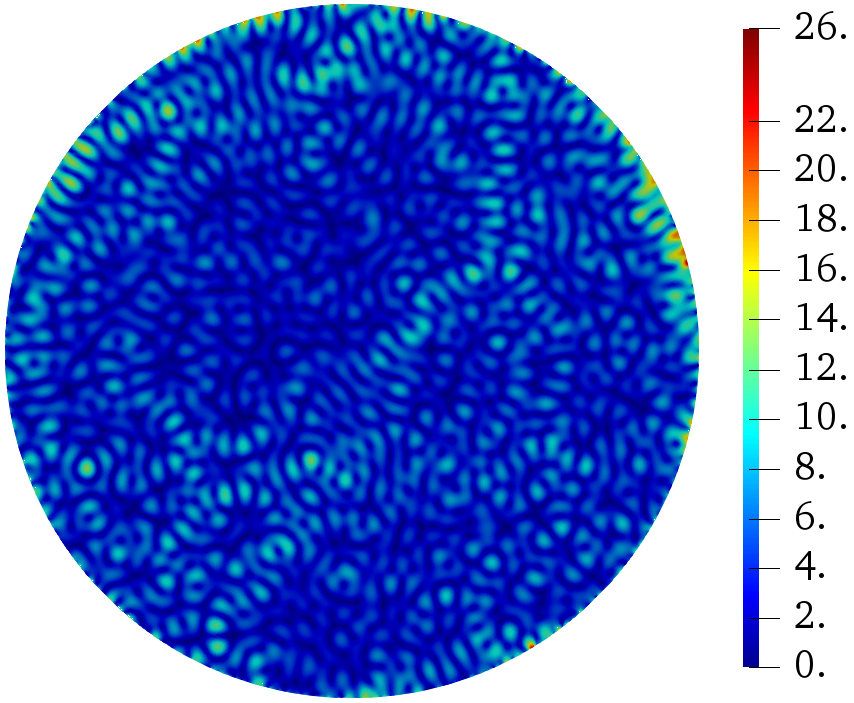}
        \caption{Modulus of target solution \(|u|\)}\label{fig:surrogate_sqrt_decay_k64_P192_M1155_abs}
        \vspace{0.02\textwidth}
    \end{subfigure}
    \begin{subfigure}{0.49\textwidth}
        \centering
        \includegraphics[height=0.2\textheight]{./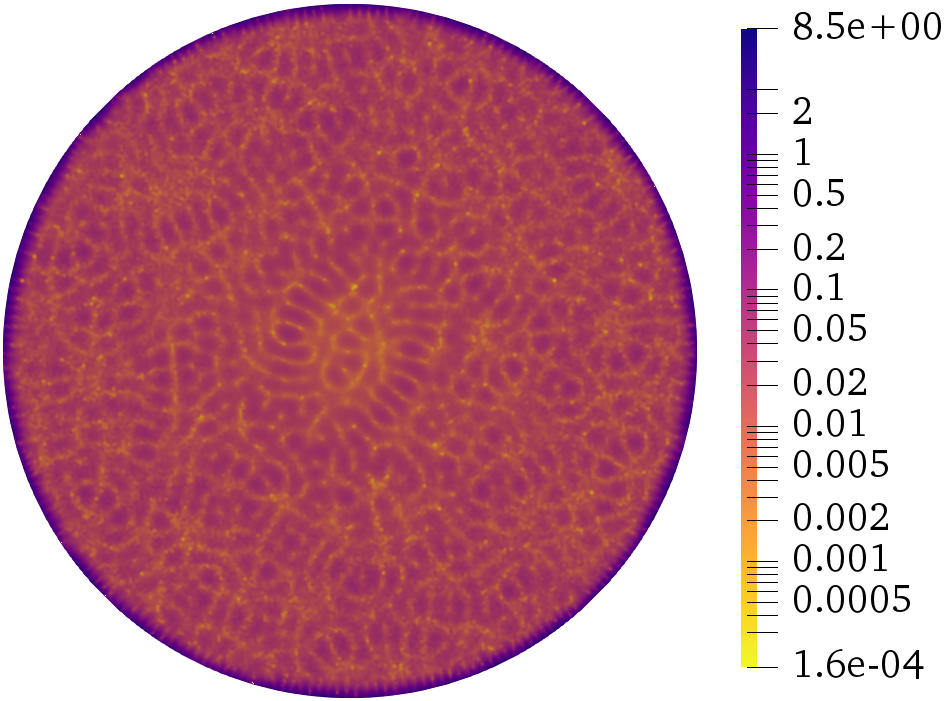}
        \caption{Absolute error using PPW
        \(|u - \mathcal{T}_{\boldsymbol{\Phi}_{M}}\boldsymbol{\xi}_{S,\epsilon}|\)}\label{fig:surrogate_sqrt_decay_k64_P192_M1155_ppw}
    \end{subfigure}
    \begin{subfigure}{0.49\textwidth}
        \centering
        \includegraphics[height=0.2\textheight]{./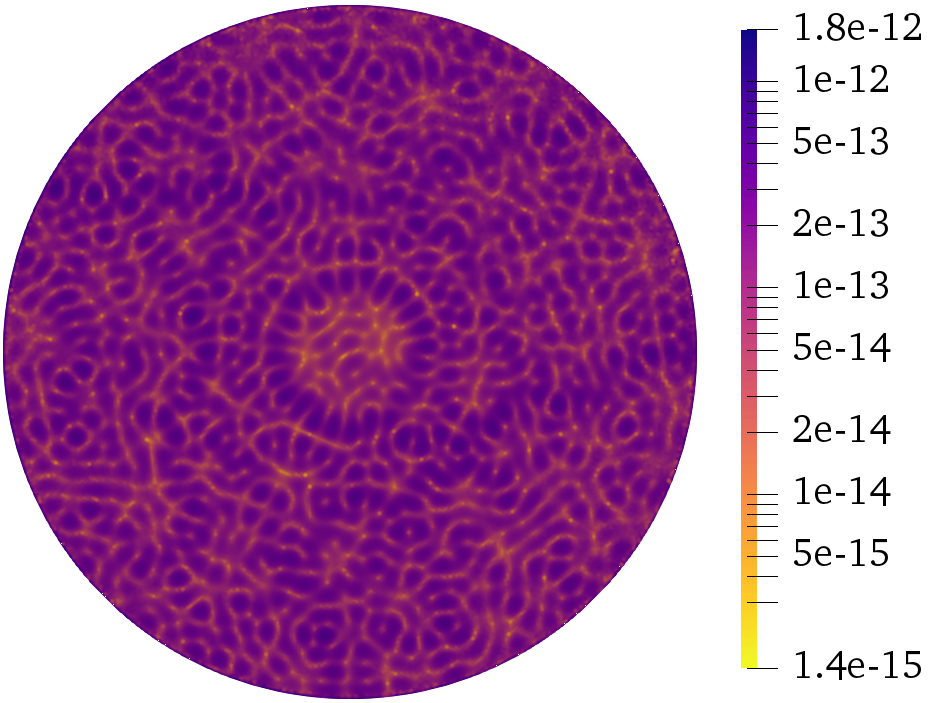}
        \caption{Absolute error using EPW
        \(|u - \mathcal{T}_{\boldsymbol{\Phi}_{P,M}}\boldsymbol{\xi}_{S,\epsilon}|\)}\label{fig:surrogate_sqrt_decay_k64_P192_M1155_epw_s}
    \end{subfigure}
    \caption{Solution \(u\), target of the approximation, defined
    in~\eqref{eq:sol-surrogate} with \(P=3\kappa=192\) (top) and associated
    absolute errors when approximated by \(M=3(2P+1)=1155\) plane waves,
    either propagative ones \(\boldsymbol{\Phi}_{M}\)
    from~\eqref{eq:propagative-pw-sets} (bottom left) or evanescent
    ones \(\boldsymbol{\Phi}_{P,M}\)
    from~\eqref{eq:evanescent-pw-sets},
    whose parameters are constructed using a Sobol type sampling (bottom
    right).
    The colormaps associated to absolute errors are logarithmic for better
    visualization.
    Wavenumber \(\kappa = 64\).
    Note the different color scales, which shows a factor-$10^{12}$ improvement in using
    EPWs instead of PPWs.
    }\label{fig:bulk}
\end{figure}

Overall, the numerical results are perfectly consistent with
Conjecture~\ref{conj:approx-conjecture}.

\subsection{Numerical evidence of quasi-optimality}\label{sec:numerics-quasi-optimality}

An important question regarding the efficiency of the proposed method concerns
how the size of the approximation set \(M\) should vary with respect to
the truncation parameter \(P\).
Fixing \(P\) amounts to looking at the finite dimensional subspace
\(\mathcal{B}_{P}\) which contains the first \(N_{P} = 2P+1\) modes.
Since \(N_{P}\) is the dimension of \(\mathcal{B}_{P}\) there is no hope to have 
approximation spaces with dimension \(M < N_{P}\) that are able to approximate
all elements of this space. 
An \emph{optimal} approximation set would therefore achieve this with \(M=N_{P}\)
elements at best.
We show numerical evidence that we achieve \emph{quasi-optimality},
in the sense that the approximation spaces \(\boldsymbol{\Phi}_{P,M}\) 
defined in~\eqref{eq:evanescent-pw-sets} only need \(M = \mathcal{O}(N_{P})\)
with a moderate proportionality constant to approximate the \(N_{P}\) circular
modes with reasonable accuracy.

We investigate numerically the linearity of the relation
\(P \to M^{*}(P,\eta)\), where \(M^{*}(P,\eta)\) was defined in
Conjecture~\ref{conj:approx-conjecture} (for a fixed \(\eta)\),
namely the validity of a law of the form
\(M^{*}(P,\eta) \approx \nu N_{P} = \nu (2P+1)\)
for some \(\nu = \nu(\eta) > 0\).
To that end, for some \(\sigma > 0\), we vary \(P\) and compute
\begin{equation}
    \widetilde{M}^{*} = \widetilde{M}^{*}(P,\sigma)
    := \min \big\{
        M \in \mathbb{N} \;|\;
        \mathcal{E}(b_{p}, \boldsymbol{\Phi}_{P,M}, S, \epsilon) \leq \sigma,
        \ \forall |p| \leq P
    \big\},
\end{equation}
where \(\mathcal{E}\) was defined in~\eqref{eq:relative-error}.
The quantity \(\widetilde{M}^{*}\) is expected to be a good estimate of
\(M^{*}(P,\eta)\).
The number of sampling points on the boundary of the disk is set to
\(S=2M\).

The numerical results are given in Figure~\ref{fig:oversampling} for the
accuracy level \(\sigma=10^{-12}\).
We represent here the variation of the ratio \(\widetilde{M}^{*}(P,\sigma)/N_{P}\)
with respect to the truncation parameter \(P\).
If the optimal law for \(\widetilde{M}^{*}(P,\sigma)\) was linear with
respect to \(P\), we would expect constant values.
Regardless of the type of sampling, we observe decreasing
curves that converge to some asymptotic value for \(\nu\) that falls within the
rather moderate range \([3,6]\).
This means that the first \(N_{P}\) circular modes (propagative and evanescent) can
be stably approximated with uniform relative error $\le10^{-12}$ using roughly
$3N_{P}$ to $6N_{P}$ EPWs.
Moreover, this asymptotic behavior seems to be robust with respect to the
wavenumber~\(\kappa\).
These more systematic results confirm
what was already observed in Section~\ref{sec:NumericsApproxConv}.
The behavior of the optimal asymptotic \(\widetilde{M}^{*}\) with
respect to \(N_{P}\) seems indeed to be linear or even sub-linear.

\begin{figure}
    \centering
    \begin{subfigure}{0.32\textwidth}
        \centering
        \includegraphics[width=\textwidth]{./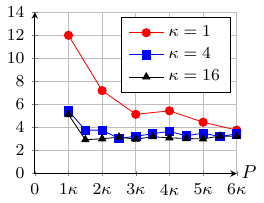}
        \caption{Deterministic sampling}\label{fig:uniform_sampling_quasi-optimality}
    \end{subfigure}
    \begin{subfigure}{0.32\textwidth}
        \centering
        \includegraphics[width=\textwidth]{./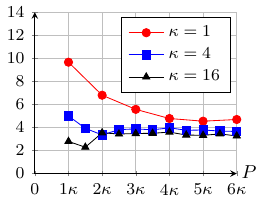}
        \caption{Sobol sampling}\label{fig:sobol_sampling_quasi-optimality}
    \end{subfigure}
    \begin{subfigure}{0.32\textwidth}
        \centering
        \includegraphics[width=\textwidth]{./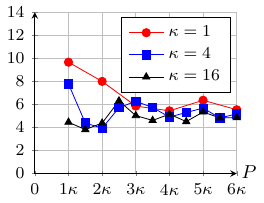}
        \caption{Random sampling}\label{fig:random_sampling_quasi-optimality}
    \end{subfigure}
    \caption{Ratio \(\widetilde{M}^{*}(P,\sigma)/N_{P}\) with respect
    to the truncation parameter \(P\) for various types of sampling
    method and \(\sigma=10^{-12}\).
    The number of EPWs necessary to approximate elements of the space
    \(\mathcal{B}_{P}\) to relative accuracy $\sigma$
    seems to scale linearly with the space dimension \(N_{P}\).
    }\label{fig:oversampling}
\end{figure}

\subsection{Triangular domain}

We conclude this section with some numerical results on a triangular
geometry.
Our purpose is to show that the approximation sets that we constructed also
exhibit good approximation properties on other shapes, despite being built
following the analysis for the disk.

We consider a triangle \(\Omega\) inscribed in the unit disk, with
vertices
\(\mathbf{v}_{1} = (1,0)\),
\(\mathbf{v}_{2} = (-1,0)\) and
\(\mathbf{v}_{3} = (\cos(5\pi/8), \sin(5\pi/8))\).
The target of the approximation problem is the Helmholtz fundamental solution
$\mathbf{x} \mapsto ({\imath}/{4}) H^{(1)}_{0}(\kappa |\mathbf{x} - \mathbf{s}|)$,
for wavenumber \(\kappa=16\) and for two different locations
\(\mathbf{s} \in \mathbb{R}^{2}\setminus \overline{\Omega}\)
of the singularity,
see~Figure~\ref{fig:triangle_k16_targets}.

\begin{figure}
    \centering
    \begin{subfigure}{0.49\textwidth}
        \centering
        \includegraphics[width=\textwidth]{./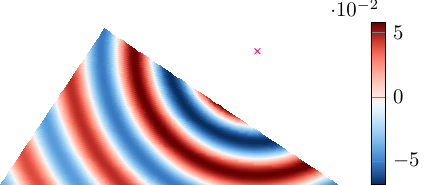}
        \caption{Singularity close to one edge.}\label{fig:triangle_k16_Flat_real}
    \end{subfigure}
    \begin{subfigure}{0.49\textwidth}
        \centering
        \includegraphics[width=\textwidth]{./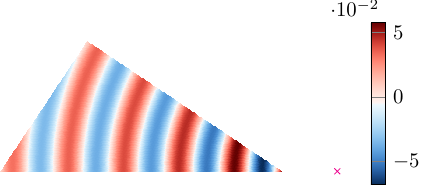}
        \caption{Singularity close to one vertex.}\label{fig:triangle_k16_Tip_real}
    \end{subfigure}
    \caption{Real part of the fundamental solutions used as target for the
    approximation problem in the triangle.
    The magenta cross \({\color{magenta}\times}\) indicates the position of
    the singularity \(\mathbf{s}\) and is located one wavelength
    \(\lambda=2\pi/\kappa\) away from the boundary of the triangle. 
    Wavenumber \(\kappa=16\).}\label{fig:triangle_k16_targets}
\end{figure}

We study the convergence of the approximation by plane waves
for increasing size of the approximation set \(M\).
The approximation is constructed as indicated in
Section~\ref{sec:Sampling}--\ref{sec:regularization}
from Dirichlet data at equispaced points on the boundary of the triangle
and by solving the oversampled linear systems using a regularized SVD.
The plane waves used in the approximation sets are either propagative,
with uniformly spaced angles as described in~\eqref{eq:propagative-pw-sets},
or evanescent, as described in~\eqref{eq:evanescent-pw-sets}.
The approximation set using EPWs is constructed from sampling
the probability density function \(\rho_{P}\) defined
in~\eqref{eq:density-function} following a Sobol sequence.
For a given size \(M\) of the approximation set, the Fourier truncation
parameter is computed as
\(P := \max\left(\lceil \kappa \rceil, \lfloor M/4 \rfloor \right)\),
as suggested by Figure~\ref{fig:sobol_sampling_quasi-optimality}.
Finally, the EPWs are re-normalized to have 
unit \(L^{\infty}\) norm on the boundary of the triangle. The latter normalization is the only
modification with respect to the sets used for the circular geometry.

The convergence results are presented in
Figure~\ref{fig:triangle_convergence_vs_M_k16_res}.
When using PPWs, the residual initially
decreases rapidly with \(M\) but stalls well before reaching machine precision
due to the rapidly growing coefficients.
In contrast, when using EPWs, the residual
converges to machine precision and the size of the
coefficients remains moderate when the final accuracy is reached.

\begin{figure}
    \centering
    \includegraphics[width=0.9\textwidth]{./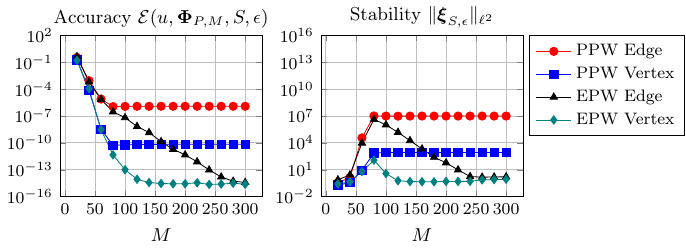}
    \caption{Accuracy \(\mathcal{E}\), as defined in~\eqref{eq:relative-error},
    (left) and stability \(\|\boldsymbol{\xi}_{S,\epsilon}\|_{\ell^2}\) (right)
    of the approximation of the fundamental solutions on the triangle $\Omega$ (see
    Figure~\ref{fig:triangle_k16_targets} for the meaning of the ``edge'' and
    ``vertex'' configurations) by PPWs or EPWs. 
    Wavenumber \(\kappa=16\) and regularization parameter
    $\epsilon=10^{-14}$.
    The convergence with respect to the size of the approximation set \(M\)
    stalls when using PPWs, due to the need for large coefficients, while EPWs
    reach machine precision.
    }\label{fig:triangle_convergence_vs_M_k16_res}
\end{figure}

We also report in Figure~\ref{fig:triangle_k16_errors} the point-wise absolute
error in the bulk of the triangle between the exact solution and the computed
approximation, linearly interpolated on a triangular mesh for visualisation
purposes.
The \(L^{\infty}\)-norm of the error inside the triangle
is of the same order of magnitude as the residual reported in
Figure~\ref{fig:triangle_convergence_vs_M_k16_res}.
The error with EPWs is of the order of machine
precision, whereas the error with PPWs
is mainly concentrated on the boundary of the triangle.

\begin{figure}
    \centering
    \begin{subfigure}{0.48\textwidth}
        \centering
        \includegraphics[width=\textwidth]{./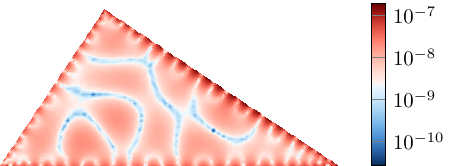}
        \caption{PPW - Edge
        }\label{fig:triangle_k16_M300_PPW_Flat_log}
    \end{subfigure}
    \begin{subfigure}{0.48\textwidth}
        \centering
        \includegraphics[width=\textwidth]{./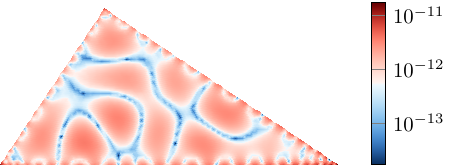}
        \caption{PPW - Vertex
        }\label{fig:triangle_k16_M300_PPW_Tip_log}
    \end{subfigure}
    \begin{subfigure}{0.48\textwidth}
        \centering
        \includegraphics[width=\textwidth]{./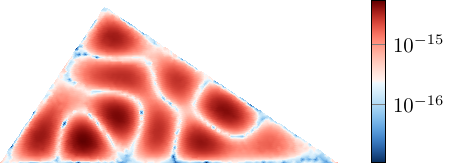}
        \caption{EPW - Edge
        }\label{fig:triangle_k16_M300_EPW_Flat_log}
    \end{subfigure}
    \begin{subfigure}{0.48\textwidth}
        \centering
        \includegraphics[width=\textwidth]{./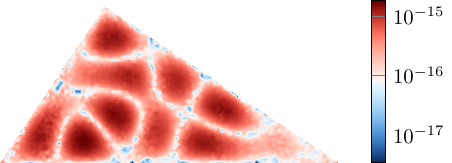}
        \caption{EPW - Vertex
        }\label{fig:triangle_k16_M300_EPW_Tip_log}
    \end{subfigure}
    \caption{Point-wise error in the triangle between the target of the
    approximation problem (see Figure~\ref{fig:triangle_k16_targets}) and the
    approximation using propagative (top) and evanescent (bottom) plane waves.
    The singularity in the solution is either close to the edge (left)
    or close to the vertex (right).
    Wavenumber \(\kappa=16\) and \(M=300\).}\label{fig:triangle_k16_errors}
\end{figure}

These results show the potential of the proposed numerical recipe for Trefftz
methods and plane wave approximations.
This is even more striking considering that the numerical recipe used
to construct the approximations is not tuned for the triangular
geometry, with the exception of the re-normalization.
Better rules adapted to the underlying geometry might yield even more efficient
approximation schemes and are the subject of ongoing investigations.

\section{Conclusions}\label{sec:conclusions}

Ill-conditioning is inherent in plane-wave based Trefftz schemes but can be
overcome if there exist accurate approximations that are moreover stable, in the sense of having expansions with bounded coefficients.
To approximate Helmholtz solutions, PPWs are known to
provide accurate approximations.
However, the associated expansions are necessarily \emph{unstable}:
the norm of the coefficients blow up for solutions with high-frequency Fourier
modes.
In contrast, EPWs, which contain high-frequency content, give
accurate as well as \emph{stable} results.
To construct stable sets of EPWs, we show numerically that an
effective strategy is to sample the parametric domain according to a
fully explicit probability measure.

This paper is only the first step towards stable and accurate approximation
schemes based on EPWs.
A theoretical problem that we have left open is the analysis of the
approximation properties of the sets of EPWs
constructed using our numerical recipe.
Next steps include the extensions to more general geometries, three-dimensional
problems (see\ \cite{Galante2023}), time-harmonic Maxwell and elastic wave
equations, the application to Trefftz schemes and to sound-field reconstruction
algorithms.
Preliminary experiments show that the proposed numerical recipe performs
well for convex polygons and in Trefftz-Discontinuous Galerkin schemes with
several cells, and provides a considerable improvement over standard
PPW schemes.

\paragraph{Acknowledgements}

The authors are grateful to Albert Cohen, Matthieu Dolbeault and Ralf
Hiptmair for helpful discussions,
and to Nicola Galante for his careful proofreading.
AM and EP acknowledge support from PRIN project ``NA--FROM--PDEs'' and from
MIUR through the ``Dipartimenti di Eccellenza'' Program (2018--2022) -- Dept.
of Mathematics, University of Pavia.

\appendix

\section{Proofs of Section~\ref{sec:Helmholtz_circular-waves}}\label{app:sec2-proofs}

\begin{proof}[Proof of Lemma~\ref{lem:b_Hilbert_basis}]
    We only need to prove that the family \(\{b_{p}\}_{p\in\mathbb{Z}}\) is
    orthogonal, which is a consequence of the orthogonality of the complex
    exponentials
    \(\{\theta\mapsto e^{\imath p \theta}\}_{p\in\mathbb{Z}}\)
    on the unit circle \(\partial B_{1}\).
    For \(p,q\in\mathbb{Z}\), we have
    \begin{equation}
        (\tilde{b}_{p},\, \tilde{b}_{q})_{L^{2}(B_{1})}
        = \int_{0}^{1} J_{p}(\kappa r)J_{q}(\kappa r)r\;\mathrm{d}r
        \int_{0}^{2\pi} e^{\imath (p-q)\theta}\;\mathrm{d}\theta
        = 2\pi\int_{0}^{1} J_{p}^2(\kappa r)r\;\mathrm{d}r\;
        \delta_{pq}.
    \end{equation} 
    The orthogonality in \(H^{1}(B_{1})\) is easily seen from
    \begin{equation}\label{eq:semi-norm-H1-bp}
        \begin{aligned}
            (\nabla \tilde{b}_{p},\, \nabla \tilde{b}_{q})_{L^{2}(B_{1})^2}
            = (\partial_{\mathbf{n}} \tilde{b}_{p},\, \tilde{b}_{q})_{L^{2}(\partial B_{1})}
            - (\Delta \tilde{b}_{p},\, \tilde{b}_{q})_{L^{2}(B_{1})}
            = (\partial_{\mathbf{n}} \tilde{b}_{p},\, \tilde{b}_{q})_{L^{2}(\partial B_{1})}
            + \kappa^2(\tilde{b}_{p},\, \tilde{b}_{q})_{L^{2}(B_{1})},
        \end{aligned}
    \end{equation} 
    where we denoted by \(\mathbf{n}\) the outward unit normal vector
    and
    \begin{equation}\label{eq:bnd-term-betap}
        (\partial_{\mathbf{n}} \tilde{b}_{p},\, \tilde{b}_{q})_{L^{2}(\partial B_{1})}
        = \kappa J_{p}'(\kappa)J_{q}(\kappa)
        \int_{0}^{2\pi} e^{\imath (p-q)\theta}\;\mathrm{d}\theta
        = 2\pi \kappa J_{p}'(\kappa)J_{p}(\kappa)
        \delta_{pq}.
    \end{equation}
\end{proof}

\begin{proof}[Proof of Lemma~\ref{lem:B_space_Helmholtz_solution}]
    It is straightforward to check that any \(b_{p}\),
    for \(p \in \mathbb{Z}\), is solution to the Helmholtz
    equation~\eqref{eq:Helmholtz}.
    The continuity of the Helmholtz operator
    \begin{equation}
        \begin{aligned}
            & \mathcal{L} \;:\;
            H^{1}(B_{1}) \to H^{-1}(B_{1}) = \left(H^{1}_{0}(B_{1})\right)^{*},
            \ \text{defined by:}\\
            & \langle \mathcal{L}u,\, v \rangle_{H^{-1} \times H^{1}_{0}}
            := \left(\nabla u,\, \nabla v\right)_{L^{2}(B_{1})}
            - \kappa^{2} \left( u,\,  v\right)_{L^{2}(B_{1})},
            \qquad\forall u \in H^{1}(B_{1}),\ v \in H^{1}_{0}(B_{1}),
        \end{aligned}
    \end{equation}
    implies that the kernel of \(\mathcal{L}\) is a closed subspace of
    \(H^{1}(B_{1})\).
    From the definition of \(\mathcal{B}\) given in~\eqref{eq:btilde_p},
    it follows that
    \begin{equation}
        \mathcal{B} \subset \ker \mathcal{L}
        := \left\{u \in H^{1}(B_{1}) \;|\;
        \mathcal{L} u = 0\right\}.
    \end{equation}

    Conversely, let \(u \in H^{1}(B_{1})\) satisfy~\eqref{eq:Helmholtz}
    and set \(g := \partial_{\mathbf{n}} u
    - \imath \kappa u \in H^{-1/2}(\partial B_{1})\).
    The Robin trace \(g\) can be written
    \begin{equation}
        g(\theta) = \sum_{p \in \mathbb{Z}} \hat{g}_{p} e^{\imath p \theta},
        \qquad \forall \theta \in [0,2\pi),
        \quad \text{with} \quad 
        \sum_{p \in \mathbb{Z}} |\hat{g}_{p}|^{2} (1+p^{2})^{-1/2}
        < \infty.
    \end{equation}
    Let \(P \geq 0\), and set
    \(g_{P}(\theta) := \sum_{|p| < P} \hat{g}_{p} e^{\imath p \theta}\),
    for $\theta \in [0,2\pi)$.
    Then there exists
    a unique \(u_{P} \in \operatorname{span} \{ b_{p} \}_{|p| < P}\),
    such that
    \( g_{P} = \partial_{\mathbf{n}} u_{P} - \imath \kappa u_{P}\),
    namely $u_P=\sum_{|p|<P}\hat g_p(\kappa\beta_p(J'_p(\kappa)-\imath
    J_p(\kappa)))^{-1}b_p$ (the term $J'_p(\kappa)-\imath J_p(\kappa)$ at the
    denominator is non-zero because of\ \cite[Eq.~(10.21.2)]{DLMF}).
    The well-posedness \cite[Prop.~8.1.3]{Melenk1995} of the problem:
    find \(v \in H^{1}(B_{1})\) such that
    \begin{equation}
        -\Delta v - \kappa^{2} v = 0, \quad\text{in}\ B_{1},
        \qquad\text{and}\qquad
        \partial_{\mathbf{n}} v - \imath \kappa v = h,
        \quad\text{on}\ \partial B_{1},
    \end{equation}
    for \(h \in H^{-1/2}(\partial B_{1})\), 
    implies that there exists a constant \(C > 0\),
    independent of \(P\), such that
    $\|u - u_{P}\|_{\mathcal{B}} \leq C 
        \|g - g_{P}\|_{H^{-1/2}(\partial B_{1})}$.
    Letting \(P\) tend to infinity, we obtain that \(u \in \mathcal{B}\).
\end{proof}

\begin{proof}[Proof of Lemma~\ref{lem:beta_asymptotic}]
    The explicit expression for \(\beta_{p}\) can be deduced by integrating by
    parts as in the proof of Lemma~\ref{lem:b_Hilbert_basis}.
    From~\eqref{eq:semi-norm-H1-bp}, the explicit expression for the boundary
    term~\eqref{eq:bnd-term-betap} and\ \cite[Eq.~(10.22.5)]{DLMF},
    \begin{equation}\label{eq:L2-norm-betap}
        \|\tilde{b}_{p}\|^2_{L^{2}(B_{1})}
        = 2\pi\int_{0}^{1} J_{p}^2(\kappa r)r\;\mathrm{d}r
        = \pi \left(J_{p}^2(\kappa) - J_{p-1}(\kappa)J_{p+1}(\kappa)\right),
    \end{equation}
    we 
    deduce the expression in~\eqref{eq:beta_explicit}.
    Then the asymptotic behavior is obtained by proving that
    \begin{equation}\label{eq:asymptotics_btildep_norms}
        \begin{aligned}
            &\|\tilde{b}_{p}\|_{L^{2}(\partial B_{1})}
            \sim\left({e\kappa}/{2}\right)^{|p|}\; |p|^{-\left(|p|+1/2\right)},\\
            &\|\tilde{b}_{p}\|_{L^{2}(B_{1})}
            \sim 2^{-1/2}
            \left({e\kappa}/{2}\right)^{|p|}\; |p|^{-\left(|p|+1\right)},\\
            &\|\tilde{b}_{p}\|_{\mathcal{B}}
            \sim \kappa^{-1}
            \left({e\kappa}/{2}\right)^{|p|}\; |p|^{-|p|},
        \end{aligned}
        \qquad\text{as}\ |p|\to +\infty.
    \end{equation}
    For any \(p\in\mathbb{Z}\),
    \(J_{-p} = (-1)^{p}J_{p}\) from\ \cite[Eq.~(10.4.1)]{DLMF}.
    Therefore, the asymptotic behavior will not depend on the sign of
    \(p\), and we suppose \(p>0\) in the following.
    We start with the trace: from the definition \eqref{eq:btilde_p} of
    $\tilde b_p$, 
    \(
        \|\tilde{b}_{p}\|^2_{L^{2}(\partial B_{1})}
        = 2\pi J_{p}^2(\kappa),
    \)
    and from\ \cite[Eq.~(10.19.1)]{DLMF}, namely
    \begin{equation}\label{eq:bessel-asymptotic}
        J_{\nu}\left(z\right)
        \sim
        ({2\pi\nu})^{-1/2}({ez}/{2\nu})^{\nu},
        \qquad\text{as}\ \nu\to +\infty,
        \qquad z\neq 0,
    \end{equation}
    the first result in \eqref{eq:asymptotics_btildep_norms} follows.
    We now consider the \(L^{2}(B_{1})\) norm.
    From~\eqref{eq:L2-norm-betap} and~\eqref{eq:bessel-asymptotic}, we get
    as \(p\to +\infty\)
    \begin{equation}
        \|\tilde{b}_{p}\|^2_{L^{2}(B_{1})}
        \sim
        \frac{1}{2}
        \left(\frac{e\kappa}{2}\right)^{2p}\; p^{-\left(2p+1\right)}
        \left[ 1 - \frac{p^{2p+1}}{(p-1)^{p-1/2}(p+1)^{p+3/2}} \right],
    \end{equation}
    and it is readily checked that the term inside the square brackets is
    equivalent to \(p^{-1}\) at infinity,
    so the second result in \eqref{eq:asymptotics_btildep_norms} follows.
    We now consider the \(\kappa\)-weighted \(H^{1}(B_{1})\)
    norm~\eqref{eq:Bnorm}.
    We need to study the asymptotic of the boundary
    term~\eqref{eq:bnd-term-betap}.
    From\ \cite[Eq.~(10.6.1)]{DLMF}
    \begin{equation}
        (\partial_{\mathbf{n}} \tilde{b}_{p},\, \tilde{b}_{p})_{L^{2}(\partial B_{1})}
        = 2\pi \kappa J_{p}'(\kappa)J_{p}(\kappa)
        = \pi\kappa \big(J_{p-1}(\kappa) - J_{p+1}(\kappa)\big)J_{p}(\kappa).
    \end{equation}
    From~\eqref{eq:bessel-asymptotic}, we get
    as \(p\to +\infty\)
    \begin{equation}
        (\partial_{\mathbf{n}} \tilde{b}_{p},\, \tilde{b}_{p})_{L^{2}(\partial B_{1})}
        \sim
        \frac{\kappa}{2}
        \left(\frac{e\kappa}{2}\right)^{2p}\; p^{-\left(2p+1\right)}
        \left[
            \frac{2}{e\kappa}
            \frac{p^{p+1/2}}{(p-1)^{p-1/2}} -
            \frac{e\kappa}{2}
            \frac{p^{p+1/2}}{(p+1)^{p+3/2}}
        \right],
    \end{equation}
    and it is readily checked that the first term inside the square brackets is
    dominant and equivalent to \(\frac{2}{\kappa}p\) at infinity.
    Thus, the dominant term in~\eqref{eq:semi-norm-H1-bp} in the limit
    \(p\to\infty\) is the boundary term.
\end{proof}

\section{Proofs of Section~\ref{sec:stability-notion}}\label{app:sec3-proofs}

\begin{proof}[Proof of Proposition~\ref{prop:approx-error-estimate}]
    The method of proof closely follows that of\ \cite[Th.~3.7]{Adcock2020}.
    In particular we first establish a so-called Marcinkiewicz--Zygmund condition,
    akin to\ \cite[Eq.~(3.2)]{Adcock2020}.

    The regularity assumption 
    for \(u\) and \(\boldsymbol{\Phi}_{k}\), which are assumed in
    \(\mathcal{B} \cap C^{0}(\overline{B_{1}})\),
    allows to have well-defined pointwise evaluations
    of their image by the Dirichlet trace operator \(\gamma\) on the boundary
    \(\partial B_{1}\).
    Recall that the sampling nodes \(\{\mathbf{x}_{s}\}_{s}\) are defined
    in~\eqref{eq:boundary_sampling_nodes}.
    For any \(v \in \mathcal{B} \cap C^{0}(\overline{B_{1}})\),
    \begin{equation}\label{eq:proof-approx-parseval-frame}
        \lim_{S \to +\infty} \frac{2\pi}{S}
        \sum_{s=1}^{S} \left| (\gamma v)(\mathbf{x}_{s}) \right|^{2}
        = \|\gamma v\|_{L^{2}(\partial B_{1})}^{2}.
    \end{equation}
    The argument of the limit in the left-hand-side is a Riemann sum
    approximant of the right-hand-side.
    A similar argument is developed in\ \cite[Ex.~3.3]{Adcock2020}
    (note that \(A'=B'=1\) in the notations of\ \cite{Adcock2020}).
    We will repeatedly use~\eqref{eq:proof-approx-parseval-frame}
    in the remainder of the proof.
    
    Let \(\boldsymbol{\mu} \in \mathbb{C}^{|\boldsymbol{\Phi}_{k}|}\).
    From~\eqref{eq:solution_SVDr}, we have
    \begin{equation}\label{eq:proof-approx-error-decomposition}
        \begin{aligned}
            u
            -
            \mathcal{T}_{\boldsymbol{\Phi}_{k}}\boldsymbol{\xi}_{S,\epsilon}
            & = 
            [
                u
                -
                \mathcal{T}_{\boldsymbol{\Phi}_{k}}\boldsymbol{\mu}
            ]
            +
            [
                \mathcal{T}_{\boldsymbol{\Phi}_{k}} A_{S,\epsilon}^{\dagger}
                A \boldsymbol{\mu}
                -
                \mathcal{T}_{\boldsymbol{\Phi}_{k}}\boldsymbol{\xi}_{S,\epsilon}
            ]
            +
            [
                \mathcal{T}_{\boldsymbol{\Phi}_{k}}\boldsymbol{\mu}
                -
                \mathcal{T}_{\boldsymbol{\Phi}_{k}} A_{S,\epsilon}^{\dagger}
                A \boldsymbol{\mu}
            ]\\
            & = 
            [
                u
                -
                \mathcal{T}_{\boldsymbol{\Phi}_{k}}\boldsymbol{\mu}
            ]
            +
                \mathcal{T}_{\boldsymbol{\Phi}_{k}} A_{S,\epsilon}^{\dagger}
            [
                A \boldsymbol{\mu}
                -
                \mathbf{b}
            ]
            +
            \mathcal{T}_{\boldsymbol{\Phi}_{k}}
            [
                \mathrm{Id}
                -
                A_{S,\epsilon}^{\dagger} A
            ] \boldsymbol{\mu}.
        \end{aligned}
    \end{equation}
    The proof proceeds by estimating the \(L^{2}\) norm of the trace on
    \(\partial B_{1}\) of each term.

    The first term appears in the estimate we want to derive, so 
    we examine the second term in~\eqref{eq:proof-approx-error-decomposition}.
    From~\eqref{eq:proof-approx-parseval-frame},
    provided \(S\) has been chosen sufficiently large,
    we can write
    (picking the constant \(2\) on the right-hand-side for simplicity, but
    any constant \(>1\) would work)
    \begin{equation}
        \|\gamma(
            \mathcal{T}_{\boldsymbol{\Phi}_{k}} A_{S,\epsilon}^{\dagger}
            [ A \boldsymbol{\mu} - \mathbf{b} ]
        )\|_{L^{2}(\partial B_{1})}^{2}
        \leq 2
        \frac{2\pi}{S}
        \sum_{s=1}^{S}
        |\gamma(
            \mathcal{T}_{\boldsymbol{\Phi}_{k}} A_{S,\epsilon}^{\dagger}
            [ A \boldsymbol{\mu} - \mathbf{b} ])(\mathbf{x}_{s})
        |^{2}
        \leq 
        \frac{4\pi}{S}
        \|
            A A_{S,\epsilon}^{\dagger}
            \left[
                A \boldsymbol{\mu}
                -
                \mathbf{b}
            \right]
        \|_{\ell^{2}}^{2}.
    \end{equation}
    Our choice of regularization~\eqref{eq:mat_approx_SVDr} ensures that 
    \(\|A A_{S,\epsilon}^{\dagger}\| \leq 1\), from which we deduce
    \begin{equation}
        \|\gamma(
            \mathcal{T}_{\boldsymbol{\Phi}_{k}} A_{S,\epsilon}^{\dagger}
            [ A \boldsymbol{\mu} - \mathbf{b} ]
        )\|_{L^{2}(\partial B_{1})}^{2}
        \leq 2
        \frac{2\pi}{S}
        \| A \boldsymbol{\mu} - \mathbf{b} \|_{\ell^{2}}^{2}
        = 2
        \frac{2\pi}{S}
        \sum_{s=1}^{S}
        |
            \gamma(\mathcal{T}_{\boldsymbol{\Phi}_{k}}
            \boldsymbol{\mu} - u)(\mathbf{x}_{s})
        |^{2}.
    \end{equation}
    Using once more~\eqref{eq:proof-approx-parseval-frame},
    provided \(S\) 
    is sufficiently large,
    we can write
    (with an additional factor~2)
    \begin{equation}
        \|\gamma(
            \mathcal{T}_{\boldsymbol{\Phi}_{k}} A_{S,\epsilon}^{\dagger}
            [ A \boldsymbol{\mu} - \mathbf{b} ]
        )\|_{L^{2}(\partial B_{1})}^{2}
        \leq 4
        \|\gamma(
            u
            -
            \mathcal{T}_{\boldsymbol{\Phi}_{k}}\boldsymbol{\mu}
        )\|_{L^{2}(\partial B_{1})}^{2}.
    \end{equation}

    We now examine the third term in~\eqref{eq:proof-approx-error-decomposition}.
    Arguing as before, from~\eqref{eq:proof-approx-parseval-frame}, there exists \(S\)
    sufficiently large such that
    \begin{equation}
        \|\gamma(
            \mathcal{T}_{\boldsymbol{\Phi}_{k}}
            [ \mathrm{Id} - A_{S,\epsilon}^{\dagger} A ] \boldsymbol{\mu}
        )\|_{L^{2}(\partial B_{1})}^{2}
        \leq 2
        \frac{2\pi}{S}
        \sum_{s=1}^{S}
        |\gamma(
            \mathcal{T}_{\boldsymbol{\Phi}_{k}}
            [ \mathrm{Id} - A_{S,\epsilon}^{\dagger} A ] \boldsymbol{\mu}
        )(\mathbf{x}_{s})|^{2}
        \leq 2
        \frac{2\pi}{S}
        \|
            A [ \mathrm{Id} - A_{S,\epsilon}^{\dagger} A ] \boldsymbol{\mu}
        \|_{\ell^{2}}^{2}.
    \end{equation}
    Our choice of regularization~\eqref{eq:mat_approx_SVDr} ensures that 
    \(\| 
        A [ \mathrm{Id} - A_{S,\epsilon}^{\dagger} A ]
    \| \leq \epsilon \sigma_{\max}
    \) so that
    \begin{equation}
        \|\gamma(
            \mathcal{T}_{\boldsymbol{\Phi}_{k}}
            [ \mathrm{Id} - A_{S,\epsilon}^{\dagger} A ] \boldsymbol{\mu}
        )\|_{L^{2}(\partial B_{1})}^{2}
        \leq 2
        \frac{2\pi}{S} \epsilon^{2} \sigma_{\max}^{2}
        \| \boldsymbol{\mu} \|_{\ell^{2}}^{2}.
    \end{equation}
    Combining all estimates,~\eqref{eq:approx-error-estimate1} is readily
    obtained.

    In order to show~\eqref{eq:approx-error-estimate2}, note first that the
    continuity of the trace operator \(\gamma\) from
    \(\mathcal{B}\) to \(L^{2}(\partial B_{1})\) allows to write,
    for any \(\boldsymbol{\mu} \in \mathbb{C}^{|\boldsymbol{\Phi}_{k}|}\),
    $\|\gamma(u - \mathcal{T}_{\boldsymbol{\Phi}_{k}}\boldsymbol{\mu})\|_{L^{2}(\partial B_{1})}
        \leq \| \gamma \|\;
        \|u - \mathcal{T}_{\boldsymbol{\Phi}_{k}}\boldsymbol{\mu}\|_{\mathcal{B}}$.
    It remains to bound the \(L^{2}(B_{1})\) norm
    of \(u - \mathcal{T}_{\boldsymbol{\Phi}_{k}}\boldsymbol{\xi}_{S,\epsilon}\),
    by the \(L^{2}(\partial B_{1})\) norm of its trace.
    Let \(\{\hat{e}_{p}\}_{p \in \mathbb{Z}}\) be the coefficients
    of \(e := u - \mathcal{T}_{\boldsymbol{\Phi}_{k}}\boldsymbol{\xi}_{S,\epsilon}\),
    in the Hilbert basis \(\{b_{p}\}_{p \in \mathbb{Z}}\).
    From the asymptotics~\eqref{eq:asymptotics_btildep_norms}, we have
    \begin{equation}
        \|e\|_{\mathcal{B}}^{2}
        = \sum_{p\in\mathbb{Z}} |\hat{e}_{p}|^{2}, \qquad
        \|e\|_{L^{2}(B_{1})}^{2}
        = \sum_{p\in\mathbb{Z}} c^{(1)}_{p}
        \frac{|\hat{e}_{p}|^{2}}{1 + p^{2}}, \qquad
        \|e\|_{L^{2}(\partial B_{1})}^{2}
        = \sum_{p\in\mathbb{Z}} c^{(2)}_{p}
        \frac{|\hat{e}_{p}|^{2}}{\sqrt{1 + p^{2}}},
    \end{equation}
    where 
    \(\{c^{(1)}_{p}\}_{p\in\mathbb{Z}}\)
    and \(\{c^{(2)}_{p}\}_{p\in\mathbb{Z}}\) are two sequences of positive
    constants both bounded below and above, and independent
    of \(u - \mathcal{T}_{\boldsymbol{\Phi}_{k}}\boldsymbol{\xi}_{S,\epsilon}\).
    The sequence $\{c^{(2)}_{p}\}_{p\in\mathbb{Z}}$ is bounded below because
    $\kappa^2$ is not a Dirichlet eigenvalue.
    We derive~\eqref{eq:approx-error-estimate2}
    from this remark and~\eqref{eq:approx-error-estimate1}.
\end{proof}

\begin{proof}[Proof of Corollary~\ref{cor:approx-error-estimate}]
    Let \(\eta > 0\) and \(u \in \mathcal{B} \cap C^{0}(\overline{B_{1}})\).
    The stability assumption implies that
    there exists \(\boldsymbol{\Phi}_{k}\) and
    \(\boldsymbol{\mu} \in \mathbb{C}^{|\boldsymbol{\Phi}_{k}|}\)
    such that
    \begin{equation}
        \|u - \mathcal{T}_{\boldsymbol{\Phi}_{k}}\boldsymbol{\mu}\|_{\mathcal{B}}
        \leq \eta \|u\|_{\mathcal B}
        \quad\text{and}\quad
        \|\boldsymbol{\mu}\|_{\ell^{2}} \leq
        C_{\mathrm{stb}} |\boldsymbol{\Phi}_{k}|^{s} \|u\|_{\mathcal B}.
    \end{equation}
    Let \(\epsilon \in (0, 1]\).
    Proposition~\ref{prop:approx-error-estimate}
    implies the existence of \(S \in \mathbb{N}\) such that
    for this particular \(\boldsymbol{\mu}\),
    \begin{equation}
        \|u - \mathcal{T}_{\boldsymbol{\Phi}_{k}}\boldsymbol{\xi}_{S,\epsilon}\|_{L^{2}(B_{1})}
        \leq C_{\mathrm{err}} \;
        \Big(
            \|u - \mathcal{T}_{\boldsymbol{\Phi}_{k}}\boldsymbol{\mu}\|_{\mathcal{B}}
            + \frac{\epsilon \sigma_{\max}}{\sqrt{S}}
            \|\boldsymbol{\mu}\|_{\ell^{2}}
        \Big)
        \leq C_{\mathrm{err}} \;
        \Big(
            \eta  +
            \frac{\epsilon \sigma_{\max}}{\sqrt{S}} C_{\mathrm{stb}} |\boldsymbol{\Phi}_{k}|^{s}
        \Big) \|u\|_{\mathcal B}.
    \end{equation}
    It remains to choose the free parameters \(\eta > 0\) and
    \(\epsilon \in (0,1]\) small enough to get the right-hand-side below
    \(\delta\), namely \(\eta \leq \frac{\delta}{2 C_{\mathrm{err}}}\)
    and \(\epsilon \leq \epsilon_{0}\) with \(\epsilon_{0}\) given
    in~\eqref{eq:epsilon-estimate}.
\end{proof}

\phantomsection
\addcontentsline{toc}{part}{References}
\printbibliography

\end{document}